\numberwithin{equation}{section}
\newtheorem{theorem}[equation]{Theorem}
\newtheorem{proposition}[equation]{Proposition}
\newtheorem{lemma}[equation]{Lemma}
\newtheorem{corollary}[equation]{Corollary}
\theoremstyle{definition}
\newtheorem{remark}[equation]{Remark}
\begin{document}
\fontsize{9}{11}\selectfont

\title[Generalized Inviscid Proudman-Johnson Equation]
{Blow-up of Solutions to the Generalized\\ Inviscid Proudman-Johnson Equation}

\author{Alejandro Sarria}
\address{%
Department of Mathematics\\
University of New Orleans\\
New Orleans, LA, 70148, USA}
\email{asarria1@uno.edu}

\author{Ralph Saxton}
\address{%
Department of Mathematics\\
University of New Orleans\\
New Orleans, LA, 70148, USA
}
\email{rsaxton@uno.edu}

\subjclass{35B44, 35B10, 35B65, 35Q35}

\keywords{Proudman-Johnson equation, blow-up.}

\begin{abstract}
For arbitrary values of a parameter $\lambda\in\mathbb{R},$ finite-time blow-up of solutions to the generalized, inviscid Proudman-Johnson equation is studied via a direct approach which involves the derivation of representation formulae for  solutions to the problem.
\end{abstract}

\maketitle

\section{Introduction}
\label{sec:intro}

In this article, we examine  blow-up, and blow-up properties, in solutions to the initial boundary value problem 
\begin{equation}
\label{eq:nonhomo}
\begin{cases}
u_{xt}+uu_{xx}-\lambda u_x^2=I(t),\,\,\,\,\,&\,t>0,
\\
u(x,0)=u_0(x),\,\,\,\,&x\in[0,1],
\\
I(t)=-(\lambda+1)\int_0^1{u_x^2\,dx,}
\end{cases}
\end{equation}
where $\lambda\in\mathbb{R}$, and solutions are subject to  periodic boundary conditions
\begin{equation}
\label{eq:pbc}
u(0,t)=u(1,t),\,\,\,\,\,\,\,u_x(0,t)=u_x(1,t).
\end{equation}
Equations (\ref{eq:nonhomo})i), iii) may be obtained by integrating the partial differential equation
\begin{equation}
\label{eq:ipj}
u_{xxt}+uu_{xxx}+(1-2\lambda)u_xu_{xx}=0
\end{equation}
and using (\ref{eq:pbc}) (\cite{Proudman1}, \cite{Childress}, \cite{Okamoto1})\footnote[1]{Equation (\ref{eq:ipj}) was introduced in \cite{Okamoto1} with a parameter $a\in\mathbb{R}$ instead of the term $2\lambda-1$. Since in this article we will be concerned with equation (\ref{eq:nonhomo})i), iii), our choice of the parameter $\lambda$ over $a$ is due, mostly, to notational convenience.}. We refer to (\ref{eq:nonhomo}) as  the generalized, inviscid, Proudman-Johnson equation and note that the equation occurs in several different contexts, either with or without the nonlocal term $I(t)$. For $\lambda=-1,$ it reduces to Burgers' equation. If $\lambda=-1/2,$ the Hunter Saxton (HS) equation  describes the orientation of waves in massive director nematic liquid crystals (\cite{Hunter1}, \cite{Bressan1}, \cite{Dafermos1}, \cite{Yin1}). For periodic functions, the HS-equation also describes geodesics on the group $\mathcal{D}(\mathbb{S})\backslash Rot(\mathbb{S})$ of orientation preserving diffeomorphisms on the unit circle $\mathbb{S}=\mathbb{R}\backslash\mathbb{Z}$, modulo the subgroup of rigid rotations with respect to the right-invariant metric $\left\langle f,g\right\rangle=\int_{\mathbb{S}}{f_xg_xdx}$ (\cite{Khesin1}, \cite{Bressan1}, \cite{Tiglay1}, \cite{Lenells1}). If $\lambda=\frac{1}{n-1},\,n\geq2,$ (\ref{eq:nonhomo}) i), iii) can be obtained directly from the $n-$dimensional incompressible Euler equations $$\boldsymbol{u}_t+(\boldsymbol{u}\cdot\nabla)\boldsymbol{u}=-\nabla p,\,\,\,\,\,\,\,\,\nabla\cdot \boldsymbol{u}=0$$ using  stagnation point form velocities $\boldsymbol{u}(x,\boldsymbol{x}^\prime,t)=(u(x,t),-\lambda\boldsymbol{x}^\prime u_x(x,t))$ for $\boldsymbol{x}^\prime=\{x_2,...,x_n\},$ or through the  cylindrical coordinate representation  $u^r=-\lambda ru_x(x,t),$ $u^{\theta}=0$ and $u^x=u(x,t)$, where $r=\left|\boldsymbol{x}^\prime\right|,$ (\cite{Childress}, \cite{Weyl1}, \cite{Saxton1}, \cite{Okamoto1}, \cite{Escher1}). Finally, in the local case $I(t)=0$, the equation appears as a special case of Calogero's equation $$u_{xt}+uu_{xx}-\Phi(u_x)=0$$ for arbitrary functions $\Phi(\cdot)$ (\cite{Calogero1}). The earliest results on blow-up in the nonlocal case $I(t)=-2\int_0^1{u_x^2dx}$ for $\lambda=1$ are due to Childress et al. (\cite{Childress}), where the authors show that there are blow-up solutions under Dirichlet boundary conditions. For spatially periodic solutions, the following is known:
\begin{itemize}
\item If $\lambda\in[-1/2,0)$ and $u_0(x)\in W_{\mathbb{R}}^{1,2}(0,1)$, $\left\|u_x\right\|_2$ remains bounded but $\left\|u_x\right\|_{\infty}$ blows up (\cite{Okamoto2}). For $\lambda\in[-1,0),$ if $u_0(x)\in H_{\mathbb{R}}^s(0,1),\,s\geq3$ and $u_0^{\prime\prime}$ is not constant, $\left\|u_x\right\|_{\infty}$ blows up (\cite{Wunsch1}), similarly if $\lambda\in(-2,-1)$, as long as 
\begin{equation}
\label{eq:wunschcond}
\inf_{x\in[0,1]}\left\{u_0^{\prime}(x)\right\}+\sup_{x\in[0,1]}\left\{u_0^{\prime}(x)\right\}<0.
\end{equation}
\item For $\lambda\in(-\infty,-1/2),\,\left\|u_x\right\|_2$ blows up in finite-time as long as (\cite{Okamoto1}) 
\begin{equation}
\label{eq:okamotocond}
\int_0^1{u_0^\prime(x)^3dx}<0.
\end{equation}
\item If $\lambda\in[0,1/2)$ and $u_0^{\prime\prime}(x)\in L_{\mathbb{R}}^{\frac{1}{1-2\lambda}}(0,1),\,u$ exists globally in time. Similarly, for $\lambda=1/2$ as long as $u_0(x)\in W_{\mathbb{R}}^{2,\infty}(0,1)$ (\cite{Okamoto2}, \cite{Saxton1}).
\item If $\lambda\in[1/2,1)$ and $u_0'''(x)\in L_{\mathbb{R}}^{\frac{1}{2(1-\lambda)}}(0,1), u$ exists globally in time (\cite{Okamoto2}). 
\end{itemize}
The purpose of this paper is to provide further insight on how periodic solutions to (\ref{eq:nonhomo}) blow up for parameters $\lambda\in(-\infty,0)$ as well as to study  regularity under differing assumptions on initial data when $\lambda\in[0,+\infty).$ To do this, we will examine solutions arising out of several classes of periodic, mean zero, initial data: the first, a class of smooth functions $u_0(x)\in C_\mathbb{R}^\infty(0,1)$, and then two classes of  data for which either $u_0'(x)$ or $u_0'^\prime(x)\in PC_\mathbb{R}(0,1),$ the family of piecewise constant functions.  The results are obtained via a direct approach which will involve the derivation of representation formulae for $u_x$ along characteristics. The rest of the paper is organized as follows.  A brief summary of new blow-up results is given in \S \ref{sec:summary}. The derivation of the solution representation formulae and proofs of the results are given in \S \ref{sec:sol} and \S \ref{sec:blow}, respectively, as well as in appendix A. Finally, some illustrative examples are to be found in \S \ref{sec:examples}.

\section{Summary of Results}
\label{sec:summary}
Our first aim will be to obtain the representation formula, (\ref{eq:mainsolu}), for solutions to (\ref{eq:nonhomo})-(\ref{eq:pbc}), which will permit us to estimate their lifetime for arbitrary  $\lambda\in\mathbb{R}$. Given  $\eta_*\in\mathbb{R}^+$, to be defined, blow-up of solutions will depend upon the existence of a finite, positive, limit $t_*$ defined by
\begin{equation}
\label{eq:assympttwo}
\begin{split}
t_*\equiv\lim_{\eta\uparrow\eta_*}{\int_0^{\eta}{\left(\int_0^1{\frac{d\alpha}{(1-\lambda\mu u_0^\prime(\alpha))^{\frac{1}{\lambda}}}}\right)^{2\lambda}\,d\mu}}.
\end{split}
\end{equation}
Let us suppose  a solution $u(x,t)$ exists on an interval $t\in[0, T],\, T<t_*.$ 
Denote by  $\gamma(\alpha,t)$  the solution to the initial value problem
\begin{equation}
\label{eq:cha}
\dot\gamma(\alpha,t)=u(\gamma(\alpha,t),t),\,\,\,\,\,\,\,\,\,\,\,\,\,\,\,\,\,\gamma(\alpha,0)=\alpha\in[0,1],
\end{equation}
and define
\begin{equation}
\label{eq:max}
\begin{split}
M(t)\equiv\sup_{\alpha\in[0,1]}\{u_x(\gamma(\alpha,t),t)\},\,\,\,\,\,\,\,\,\,\,M(0)=M_0,
\end{split}
\end{equation}
and
\begin{equation}
\label{eq:min22}
\begin{split}
m(t)\equiv\inf_{\alpha\in[0,1]}\{u_x(\gamma(\alpha,t),t)\},\,\,\,\,\,\,\,\,\,\,m(0)=m_0,
\end{split}
\end{equation}
where, for $u_0(\alpha)\in C_{\mathbb{R}}^\infty(0,1)$ and $\lambda>0,$ we will assume that the mean-zero function $u_0^\prime$ attains its greatest value $M_0>0$ at, at most, finitely many locations $\overline\alpha_i\in[0,1],\,\,1\leq i\leq m.$ Similarly, for $\lambda<0,$ we suppose that the  least value, $m_0<0$, occurs at a discrete set of points\footnote[2]{One possibility for admitting infinitely many $\overline\alpha_i$ and/or $\underline\alpha_j$ will be considered below for the class $PC_{\mathbb{R}}(0,1).$} $\underline\alpha_j\in[0,1],\,\,1\leq j\leq n.$ From the above definitions and the solution formula, it can easily be shown that (see appendix C) 
\begin{equation}
\label{eq:maxp}
\begin{split}
M(t)=u_x(\gamma(\overline\alpha_i,t),t),\,\,\,\,\,\,\,\,\,\,\,m(t)=u_x(\gamma(\underline\alpha_j,t),t).
\end{split}
\end{equation}
The main results of this paper are summarized in the following theorems and in Corollary \ref{coro:coro1} below. 
\begin{theorem}
\label{thm:maintheorem1}
Consider the initial boundary value problem (\ref{eq:nonhomo})-(\ref{eq:pbc}) for the generalized, inviscid, Proudman-Johnson equation. There exist smooth, mean-zero initial data such that:
	\begin{enumerate}
		\item\label{item:global} For $\lambda\in[0,1],$ solutions exist globally in time. Particularly, these vanish as $t\uparrow t_*=+\infty$ for $\lambda\in(0,1)$ but converge to a non-trivial steady-state if $\lambda=1.$
	\item\label{item:blowtwo} For $\lambda\in\mathbb{R}\backslash(-2,1],$ there exists a finite $t_*>0$ such that both the maximum $M(t)$ and the minimum $m(t)$ diverge to $+\infty$ and  to $-\infty$, respectively, as $t\uparrow t_*.$ In addition, for every $\alpha\notin\{\overline\alpha_i,\underline\alpha_j\},$ $\lim_{t\uparrow t_*}\left|u_x(\gamma(\alpha,t),t)\right|=+\infty$ (two-sided, everywhere blow-up).
	\item\label{item:blowone} For $\lambda\in(-2,0),$ there is a finite $t_*>0$ such that only the minimum diverges, $m(t)\to-\infty,$ as $t\uparrow t_*$ (one-sided, discrete blow-up). 
	\end{enumerate}
\end{theorem}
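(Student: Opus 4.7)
The plan is to derive, in \S\ref{sec:sol}, the closed-form representation
\begin{equation*}
u_x(\gamma(\alpha,t),t) = \mathcal{K}(\eta)^{-2\lambda}\!\left[\frac{u_0'(\alpha)}{1-\lambda\eta u_0'(\alpha)} - \frac{\mathcal{K}'(\eta)}{\mathcal{K}(\eta)}\right],
\end{equation*}
where $\mathcal{K}(\eta) := \int_0^1 (1-\lambda\eta u_0'(\alpha))^{-1/\lambda}\,d\alpha$ and the time change is $dt/d\eta = \mathcal{K}(\eta)^{2\lambda}$, so that (\ref{eq:assympttwo}) becomes $t_* = \lim_{\eta\uparrow\eta_*}\int_0^\eta \mathcal{K}(\mu)^{2\lambda}\,d\mu$, with $\eta_* = 1/(\lambda M_0)$ for $\lambda>0$ and $\eta_* = 1/(\lambda m_0)$ for $\lambda<0$ marking the first vanishing of $1 - \lambda\eta\,u_0'(\cdot)$. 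The derivation uses that $\omega(\alpha,t) := u_x(\gamma(\alpha,t),t)$ satisfies the Riccati equation $\dot\omega = \lambda\omega^2 + I(t)$ along characteristics; the substitution $\omega = -\dot\psi/(\lambda\psi)$ linearizes this to $\ddot\psi + \lambda I(t)\psi = 0$, whose $\alpha$-independent coefficients force $\psi(\alpha,t) = \phi_1(t) - \lambda u_0'(\alpha)\phi_2(t)$, and the Wronskian together with the periodicity-induced constraint $\int_0^1 \gamma_\alpha\,d\alpha = 1$ pins down $\phi_{1,2}$ in terms of $\mathcal{K}(\eta)$.

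The central technical step is a Laplace-type asymptotic analysis of $\mathcal{K}$ and $\mathcal{K}'$ as $\eta\uparrow\eta_*$, carried out for explicitly constructed smooth, mean-zero $u_0$ whose derivative attains the relevant extremum non-degenerately, $u_0'(\alpha) \sim u_0'(\alpha_*) - B(\alpha-\alpha_*)^2$. Setting $\epsilon := 1 - \lambda\eta\,u_0'(\alpha_*)$ and rescaling $\alpha - \alpha_* = \sqrt{\epsilon/k}\,y$, the local integrand $(1-\lambda\eta u_0')^{-1/\lambda - s}$ contributes a term of order $\epsilon^{1/2-1/\lambda-s}\int_{\mathbb{R}}(1+y^2)^{-1/\lambda-s}\,dy$ whenever the normalized integral converges, i.e.\ whenever $-2/\lambda - 2s < -1$; otherwise the contribution is bounded. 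Applied with $s=0,1$ this criterion fails for $s=1$ precisely when $\lambda\in(-2,0)$, and it is this threshold at $\lambda=-2$ that separates parts (\ref{item:blowtwo}) and (\ref{item:blowone}).

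Each part then follows by substitution. For (\ref{item:global}), the asymptotic $\mathcal{K}^{2\lambda}\sim\epsilon^{\lambda-2}$ is not integrable in $\mu$ when $\lambda\le 1$, so $t_*=+\infty$; the representation yields $M(t)\sim\epsilon^{1-\lambda}$, which vanishes for $\lambda\in(0,1)$ and tends to a positive constant for $\lambda=1$ (the non-trivial steady state), while $\lambda=0$ is handled by a direct limit in the formula or by integrating $\dot\omega = I(t)$. For (\ref{item:blowtwo}), the same estimate gives $t_*<\infty$: the explicit pole $u_0'(\overline\alpha_i)/\epsilon$ forces $M(t)\to+\infty$, while the divergent term $-\mathcal{K}'(\eta)/\mathcal{K}(\eta)$ dominates at every non-extremal $\alpha$, driving $|u_x(\gamma(\alpha,t),t)|\to+\infty$ with sign determined by $\lambda$ (in particular $m(t)\to-\infty$). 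For (\ref{item:blowone}), both $\mathcal{K}$ and $\mathcal{K}'$ remain bounded up to $\eta_*$, so $\mathcal{K}^{2\lambda}$ is bounded and $t_*<\infty$; the only possible singularity is the explicit factor $u_0'(\alpha)/(1-\lambda\eta u_0'(\alpha))$, which diverges only at $\alpha=\underline\alpha_j$, so $m(t)\to-\infty$ while $M(t)$ and all non-extremal values stay bounded—one-sided, discrete blow-up.

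The main obstacle will be executing the Laplace bookkeeping cleanly at the thresholds $\lambda\in\{-2,1\}$ (with $\lambda=0$ treated separately as a degenerate limit), and identifying the exact leading constant $C$ in $\mathcal{K}'(\eta)/\mathcal{K}(\eta) \sim C/\epsilon$ via the beta-function ratio $\Gamma(p+1/2)/[p\,\Gamma(p-1/2)]$ that emerges from the local integrals. In particular, the "everywhere" part of (\ref{item:blowtwo}) rests on showing this constant is nonzero with the correct sign so that the $-\mathcal{K}'/\mathcal{K}$ contribution truly dominates the bounded first term at every $\alpha\notin\{\overline\alpha_i,\underline\alpha_j\}$, which is what distinguishes uniform pointwise blow-up from merely pointwise blow-up at the extrema.
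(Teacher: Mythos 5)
Your proposal is correct and follows essentially the same route as the paper: the representation formula you write down is exactly the paper's (\ref{eq:finalsolu22}) (with $\mathcal{K}=\bar{\mathcal{K}}_0$ and $\mathcal{K}'=\int_0^1 u_0'\,\mathcal{J}^{-1-1/\lambda}d\alpha$), obtained from the same linearization $\ddot\omega+\lambda I\omega=0$ of the Riccati equation via $\omega=\gamma_\alpha^{-\lambda}$ and the periodicity constraint $\int_0^1\gamma_\alpha\,d\alpha=1$, and the case analysis in $\lambda$ rests on the same asymptotics of $\bar{\mathcal{K}}_0,\bar{\mathcal{K}}_1$ near a non-degenerate quadratic extremum of $u_0'$, including the decisive constant $\bar{\mathcal{K}}_1\bar{\mathcal{K}}_0^{-1}\sim(1-\lambda/2)\mathcal{J}(\overline\alpha,t)^{-1}$. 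The only difference is that you evaluate the local integrals by the rescaling $\alpha-\alpha_*=\sqrt{\epsilon/k}\,y$ (a direct Laplace-type computation) where the paper routes the same computation through ${}_2F_1$ and its analytic continuation (Lemma \ref{prop:prop}, Proposition \ref{lem:analcont}); both yield identical exponents and the same beta-function constants, so this is a cosmetic rather than structural difference.
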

Subsequent  results examine the behaviour, as $t\uparrow t_*$, of two quantities, the jacobian $\gamma_\alpha(\alpha,t)$ (see (\ref{eq:cha})), and the $L^p$ norm
\begin{equation}
\label{eq:energygen}
\begin{split}
\left\|u_x(x,t)\right\|_p=\left(\int_0^1{(u_x(\gamma(\alpha,t),t))^p\gamma_\alpha(\alpha,t)\,d\alpha}\right)^{1/p},\,\,\,\,\, p\in[1,+\infty),
\end{split}
\end{equation}
with particular emphasis given to the energy function $E(t)=\left\|u_x\right\|_2^2$. 
\begin{remark}
Corollary \ref{coro:coro1} and Theorem \ref{thm:lpintro} below describe  pointwise behaviour and $L^p$-regularity of solutions as $t\uparrow t_*$ where, for $\lambda\in\mathbb{R}\backslash[0,1],$ $t_*>0$ refers to the \textit{finite} $L^{\infty}$ blow-up time of Theorem \ref{thm:maintheorem1}; otherwise the description is asymptotic, for $t\uparrow t_*=+\infty.$
\end{remark}
\begin{corollary}
\label{coro:coro1}
Let $u(x,t)$ in Theorem \ref{thm:maintheorem1} be a solution to the initial boundary value problem (\ref{eq:nonhomo})-(\ref{eq:pbc}) defined for $t\in[0,t_*)$. Then
\begin{equation}
\label{eq:sumjacblow}
\lim_{t\uparrow t_*}\gamma_\alpha(\alpha,t)=
\begin{cases}
+\infty,\,\,\,\,\,\,\,\,\,\,\,\,\,&\alpha=\overline\alpha_i,\,\,\,\,\,\,\,\,\,\,\,\,\,\,\,\,\,\,\,\,\,\,\lambda\in(0,+\infty),
\\
0,\,\,\,\,\,\,\,\,\,\,\,\,\,&\alpha\neq\overline\alpha_i,\,\,\,\,\,\,\,\,\,\,\,\,\,\,\,\,\,\,\,\,\,\,\lambda\in(0,2],
\\
C,\,\,\,\,\,\,\,\,\,\,\,\,\,&\alpha\neq\overline\alpha_i,\,\,\,\,\,\,\,\,\,\,\,\,\,\,\,\,\,\,\,\,\,\lambda\in(2,+\infty),
\\
0,\,\,\,\,\,\,\,\,\,\,\,\,\,&\alpha=\underline\alpha_j,\,\,\,\,\,\,\,\,\,\,\,\,\,\,\,\,\,\,\,\,\lambda\in(-\infty,0),
\\
C,\,\,\,\,\,\,\,\,\,\,\,\,\,&\alpha\neq\underline\alpha_j,\,\,\,\,\,\,\,\,\,\,\,\,\,\,\,\,\,\,\,\,\lambda\in(-\infty,0)
\end{cases}
\end{equation}
for positive constants $C$ which depend on the choice of $\lambda$ and $\alpha$.
\end{corollary}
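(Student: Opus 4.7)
The plan is to work directly from the representation formula \eqref{eq:mainsolu} established in Section \ref{sec:sol}. From the ODE $\dot\gamma_\alpha = u_x(\gamma,t)\,\gamma_\alpha$ coupled with $\gamma_\alpha(\alpha,0)=1$ and the periodicity constraint $\int_0^1\gamma_\alpha\,d\alpha=1$, the standard manipulation that yields the solution formula also gives an explicit expression of the shape
\begin{equation*}
\gamma_\alpha(\alpha,t)=\frac{(1-\lambda\eta u_0'(\alpha))^{-1/\lambda}}{\mathcal{K}(\eta)},\qquad \mathcal{K}(\eta)\equiv\int_0^1\frac{d\beta}{(1-\lambda\eta u_0'(\beta))^{1/\lambda}},
\end{equation*}
where $\eta=\eta(t)\in[0,\eta_*)$ is the reparametrization appearing in \eqref{eq:assympttwo}, satisfying $\dot\eta=\mathcal{K}(\eta)^{-2\lambda}$. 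The corollary then reduces to analyzing the pointwise and integral limits of this formula as $\eta\uparrow\eta_*$.

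The next step is to locate $\eta_*$ and determine when $\mathcal{K}$ diverges. For $\lambda>0$, nonnegativity of the denominator forces $\eta_*=1/(\lambda M_0)$, and the base $1-\lambda\eta u_0'(\alpha)$ vanishes precisely at $\alpha=\overline\alpha_i$ as $\eta\uparrow\eta_*$. Using a generic nondegenerate expansion $u_0'(\alpha)=M_0-c(\alpha-\overline\alpha_i)^2+O((\alpha-\overline\alpha_i)^3)$ near each maximum and a rescaling $\beta\mapsto(\alpha-\overline\alpha_i)\sqrt{\lambda\eta c/(1-\lambda\eta M_0)}$, the asymptotics of $\mathcal{K}(\eta)$ split according to the convergence of $\int(1+s^2)^{-1/\lambda}\,ds$: namely $\mathcal{K}(\eta)\sim C(1-\lambda\eta M_0)^{1/2-1/\lambda}\to\infty$ for $\lambda\in(0,2)$, a logarithmic divergence for $\lambda=2$, and $\mathcal{K}(\eta)\to\mathcal{K}_*<\infty$ for $\lambda>2$. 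Inserting these into the formula for $\gamma_\alpha$, evaluating at $\alpha=\overline\alpha_i$ (where the numerator is $(1-\lambda\eta M_0)^{-1/\lambda}$) and at a generic $\alpha\neq\overline\alpha_i$ (where the numerator converges to a finite positive constant) yields the first three lines of \eqref{eq:sumjacblow}.

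For $\lambda<0$ one writes $(1-\lambda\eta u_0'(\alpha))^{-1/\lambda}=(1-\lambda\eta u_0'(\alpha))^{|1/\lambda|}$ and observes that $\eta_*=1/(\lambda m_0)$, so the base vanishes precisely at $\underline\alpha_j$ as $\eta\uparrow\eta_*$. Because the integrand of $\mathcal{K}(\eta)$ is now a bounded nonnegative function on $[0,1]$ whose zeros form a finite set, dominated convergence shows $\mathcal{K}(\eta)\to\mathcal{K}_*\in(0,\infty)$. Reading off the formula, $\gamma_\alpha(\underline\alpha_j,t)\to 0$ and $\gamma_\alpha(\alpha,t)\to C>0$ for $\alpha\neq\underline\alpha_j$, proving the last two lines.

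The principal obstacle is the asymptotic analysis of $\mathcal{K}(\eta)$ for $\lambda>0$: one must isolate the leading-order contribution to the integral coming from neighborhoods of the $\overline\alpha_i$, check that the remainder is bounded, and handle the transitional value $\lambda=2$ (which lies in the $\gamma_\alpha\to 0$ regime because of the logarithmic divergence). The rest is essentially algebra: extracting powers of $1-\lambda\eta M_0$ from numerator and denominator and matching them against the cases listed in \eqref{eq:sumjacblow}. The assumption that the extrema $\overline\alpha_i$, $\underline\alpha_j$ are finite in number is precisely what makes this Laplace-type estimate of $\mathcal{K}(\eta)$ uniform and tractable.
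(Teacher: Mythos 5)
Your proposal is correct and follows essentially the same route as the paper: both reduce the corollary to the identity $\gamma_\alpha=\mathcal{J}(\alpha,t)^{-1/\lambda}/\bar{\mathcal{K}}_0(t)$ from (\ref{eq:sum}) and then track the rate at which $\bar{\mathcal{K}}_0$ diverges or stabilizes as $\eta\uparrow\eta_*$, exploiting the quadratic nondegeneracy of $u_0'$ at its finitely many extrema. The only difference is in technique for the integral asymptotics: where you obtain $\bar{\mathcal{K}}_0\sim C\,\mathcal{J}(\overline\alpha,t)^{1/2-1/\lambda}$ for $\lambda\in(0,2)$, the logarithmic divergence at $\lambda=2$, and boundedness for $\lambda>2$ by a direct Laplace-type rescaling, the paper derives the same estimates --- (\ref{eq:comple3}), (\ref{eq:alphaposgen6}), (\ref{eq:genalpha2}) --- via an exact ${}_2F_1$ representation (Lemma \ref{prop:prop}) and its analytic continuation (Proposition \ref{lem:analcont}), and for $\lambda<0$ it bounds $\bar{\mathcal{K}}_0$ above and below by explicit constants in (\ref{eq:newest}) and (\ref{eq:quick3}), which is the content of your dominated-convergence step.
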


\begin{table}[t]
\caption{Energy Estimates and $L^p$ Regularity as $t\uparrow t_*$}
\centering
		\renewcommand{\arraystretch}{1.4}
    \begin{tabular}{ | m{3.0cm} | m{1.4cm} | m{1.4cm} | m{2.0cm} | m{2.0cm} | }
    \hline
    \centering{$\lambda$} & $\,\,\,\,\,\,E(t)$  & $\,\,\,\,\,\,\dot E(t)$ & $\,\,\,\,\,\,\,\,\,\,\,\,u_x$ \\ \hline
    \centering{$(-\infty,-2]$} &$\,\,\,\,\,+\infty$ & $\,\,\,\,\,+\infty$ & $\,\,\,\notin L^p,\, p>1$ \\ \hline
    \centering{$(-2,-2/3]$} &$\,\,\,\,\,+\infty$ & $\,\,\,\,\,+\infty$ & $\,\,\,\in L^1,\, \notin L^2$    \\ \hline
    \centering{$(-2/3,-1/2)$} &Bounded &$\,\,\,\,\,+\infty$ & $\,\,\,\in L^2,\, \notin L^3$ \\ \hline
    \centering{$-1/2$} &Constant &$\,\,\,\,\,\,\,\,\,\,\,0$ & $\,\,\,\in L^2,\, \notin L^3$\\ \hline
    \centering{$(-1/2,-2/5]$} &Bounded &$\,\,\,\,\,-\infty$ & $\,\,\,\in L^2,\, \notin L^3$ \\ \hline
    \centering{$\left(-\frac{2}{2p-1},0\right),\, p\geq3$} &Bounded &Bounded & $\,\,\,\,\,\,\,\,\,\,\,\in L^p$  \\ \hline
    \centering{$\left[-\frac{2}{p-1},-\frac{2}{p}\right],\, p\geq6$} &Bounded &Bounded & $\,\,\,\,\,\,\,\,\,\,\,\notin L^p$  \\ \hline
    \centering{$[0,1]$} &Bounded &Bounded & $\,\,\,\,\,\,\,\,\,\,\,\in L^{\infty}$     \\ \hline
    \centering{$(1,+\infty)$} &$\,\,\,\,\,+\infty$ & $\,\,\,\,\,+\infty$ & $\,\,\,\notin L^p,\, p>1$ \\ \hline
    \end{tabular}
\label{table:lptable}
\end{table}

\begin{theorem}
\label{thm:lpintro}
Let $u(x,t)$ in Theorem \ref{thm:maintheorem1} be a solution to the initial boundary value problem (\ref{eq:nonhomo})-(\ref{eq:pbc}) defined for $t\in[0,t_*)$. It holds,
\begin{enumerate}
\item\label{it:lp2} For $p\geq1$ and $\frac{2}{1-2p}<\lambda\leq1,\, \lim_{t\uparrow t_*}\left\|u_x\right\|_p<+\infty.$
\item\label{it:lp1} For $p>1$ and $\lambda\in\mathbb{R}\backslash(-2,1],\, \lim_{t\uparrow t_*}\left\|u_x\right\|_p=+\infty.$ Similarly, for $p\in(1,+\infty)$ and   $\lambda\in(-2,-2/p].$
\item\label{it:ener} The energy $E(t)=\left\|u_x\right\|_2^2$ diverges if $\lambda\in\mathbb{R}\backslash(-2/3,1]$ as $t\uparrow t_*$ but remains finite for $t\in[0,t_*]$ otherwise. Moreover, $\dot E(t)$ blows up to $+\infty$ as $t\uparrow t_*$ when $\lambda\in\mathbb{R}\backslash[-1/2,1]$ and $\dot E(t)\equiv0$ for $\lambda=-1/2;$ whereas, $\lim_{t\uparrow t_*}\dot E(t)=-\infty$ if $\lambda\in(-1/2,-2/5]$ but remains bounded when $\lambda\in(-2/5,1]$ for all $t\in[0,t_*]$. 
\end{enumerate} 
\end{theorem}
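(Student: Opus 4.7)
\emph{Proof plan.} The strategy is to substitute the representation formula (\ref{eq:mainsolu}) for $u_x$ along characteristics into the Lagrangian expression (\ref{eq:energygen}) for the $L^p$ norm, and thereby reduce each statement to the convergence or divergence of an explicit one-variable integral in $\alpha$ as $\eta\uparrow\eta_*$.

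First I would differentiate $\gamma$ in $\alpha$ using (\ref{eq:mainsolu}) together with the periodic boundary conditions so that both $u_x\circ\gamma$ and $\gamma_\alpha$ become rational functions of $(1-\lambda\eta(t)u_0'(\alpha))$ with explicit $\lambda$-dependent exponents. Substituting into (\ref{eq:energygen}) then yields an identity of the form
\begin{equation*}
\|u_x(\cdot,t)\|_p^p \;=\; \int_0^1 \frac{B(\alpha,\eta)}{\bigl(1-\lambda\eta\, u_0'(\alpha)\bigr)^{q(\lambda,p)}}\,d\alpha,
\end{equation*}
where $B(\alpha,\eta)$ is bounded above and below uniformly for $\eta\in[0,\eta_*]$ and $q(\lambda,p)$ is an explicit linear combination of $p$ and $1/\lambda$. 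The question of finiteness of $\|u_x\|_p$ as $t\uparrow t_*$ thereby reduces to a local analysis of this integrand at the zero-set of $1-\lambda\eta u_0'(\alpha)$ in the limit $\eta\uparrow\eta_*$.

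Next I would localize near each singular point, which by Corollary \ref{coro:coro1} is exactly the set of maximizers $\overline\alpha_i$ when $\lambda>0$ and of minimizers $\underline\alpha_j$ when $\lambda<0$. Using the non-degenerate Taylor expansion $u_0'(\alpha)-M_0\sim c(\alpha-\overline\alpha_i)^2$ (and its analogue at $\underline\alpha_j$), the local integral reduces to a Beta-type integral in a shifted variable, whose convergence is controlled by a single inequality involving $q(\lambda,p)$. Balancing exponents produces the threshold $\lambda>-2/(2p-1)$ under $\lambda\leq1$, proving part (\ref{it:lp2}); reversing the inequality yields divergence in the range stated in (\ref{it:lp1}). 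For $\lambda\in\mathbb{R}\setminus(-2,1]$ I would combine this local analysis with the pointwise two-sided blow-up of $u_x$ from Theorem \ref{thm:maintheorem1} to conclude $L^p$-blow-up for every $p>1$.

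For part (\ref{it:ener}), I would first specialize $p=2$, which immediately gives the energy threshold $\lambda>-2/3$. For $\dot E$, multiplying (\ref{eq:nonhomo}) by $2u_x$, integrating over $[0,1]$, and using integration by parts together with $\int_0^1 u_x\,dx=0$ yields the identity
\begin{equation*}
\dot E(t) \;=\; (2\lambda+1)\int_0^1 u_x^3\,dx,
\end{equation*}
which vanishes identically for $\lambda=-1/2$. For $\lambda\neq-1/2$, I would analyze $\int_0^1 u_x^3\,dx$ via the same Lagrangian substitution used above with $p=3$, producing the threshold $\lambda>-2/5$. The sign dichotomy $\dot E\to\pm\infty$ then follows from the sign of the prefactor $(2\lambda+1)$ combined with the fact that $u_x$ blows up to $+\infty$ at the $\overline\alpha_i$ when $\lambda>0$ and to $-\infty$ at the $\underline\alpha_j$ when $\lambda<0$. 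The main technical hurdle will be the exponent bookkeeping inside the localized integral: the singular factor in $u_x\circ\gamma$ and the vanishing factor of $\gamma_\alpha$ partially cancel, and only a careful accounting of the net power of $|\alpha-\overline\alpha_i|$ (or $|\alpha-\underline\alpha_j|$), together with the normalization integral appearing in $\gamma_\alpha$, isolates the sharp critical values $-2/3$, $-1/2$ and $-2/5$ as boundary values and correctly decides the endpoint inclusions.
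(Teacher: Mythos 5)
Your overall strategy coincides with the paper's: substitute the representation formula into the Lagrangian $L^p$ norm, localize at the extrema of $u_0'$ using the non-degenerate quadratic expansion, reduce to Beta-type (in the paper, hypergeometric) integrals, and treat $\dot E$ through the identity $\dot E=(1+2\lambda)\int_0^1 u_x^3\,dx$ with the $p=3$ analysis. However, two steps in your reduction would fail as written. First, the claim that $\|u_x\|_p^p=\int_0^1 B(\alpha,\eta)\,(1-\lambda\eta u_0'(\alpha))^{-q}\,d\alpha$ with $B$ bounded above and below uniformly on $[0,\eta_*]$ is not tenable: the integrand obtained from (\ref{eq:mainsolu}) is $|f(\alpha,t)|^p$ with $f$ a \emph{difference} of two singular terms whose relative size varies with $\alpha$ and $\eta$, and the normalizing prefactor $\bar{\mathcal{K}}_0(t)^{-(1+2\lambda p)}$ itself diverges or vanishes as $\eta\uparrow\eta_*$ for $\lambda\in(0,2)$. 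Because of this cancellation the paper cannot work with a single two-sided pointwise bound; it proves divergence and finiteness by two separate routes, an upper bound via the convexity inequality $|a-b|^p\le 2^{p-1}(|a|^p+|b|^p)$ (giving (\ref{eq:upper})) and a lower bound via Jensen's inequality applied to $\int_0^1 f\,d\alpha$ (giving (\ref{eq:lower})), and then feeds the hypergeometric asymptotics (\ref{eq:estgenp1})--(\ref{eq:upper1}) into each bound separately. Second, for $\lambda\in(1,2]$ the inference from pointwise two-sided everywhere blow-up to $L^p$ blow-up is not automatic: by Corollary \ref{coro:coro1} the Jacobian $\gamma_\alpha$ tends to $0$ for every $\alpha\ne\overline\alpha_i$ in that range, so $|u_x\circ\gamma|^p\gamma_\alpha$ is an indeterminate product and Fatou's lemma gives nothing; the paper resolves this quantitatively by computing the net exponent $\sigma(\lambda,p)=\tfrac{3p}{2}-\tfrac12-\lambda p$ in the Jensen lower bound and choosing $p-1>0$ small to make $\sigma<0$. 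Your closing remark about ``careful exponent bookkeeping'' points in the right direction, but the proof needs these two specific devices (the split into upper/convexity and lower/Jensen estimates, and the small-$p$ trick for $\lambda\in(1,2)$) to actually close.
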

See Table \ref{table:lptable} for a summary of the results mentioned in Theorem \ref{thm:lpintro}. 
\begin{remark}
Global weak solutions to (\ref{eq:nonhomo})i) having $I(t)=0$ and $\lambda=-1/2$ have been studied by several authors, (\cite{Hunter2}, \cite{Bressan1}, \cite{Lenells1}). Such solutions have also been constructed for $\lambda\in[-1/2,0)$ in \cite{Wunsch2} (c.f. also \cite{Wunsch3}) by extending an argument used in \cite{Bressan1}. Notice that theorems \ref{thm:maintheorem1} and \ref{thm:lpintro} above imply the existence of smooth data and a finite $t_*>0$ such that strong solutions to (\ref{eq:nonhomo})-(\ref{eq:pbc}) with $\lambda\in(-2/3,0)$ satisfy $\lim_{t\uparrow t_*}\left\|u_x\right\|_{\infty}=+\infty$ but $\lim_{t\uparrow t_*}E(t)<+\infty.$ As a result, it is possible that the representation formulae derived in \S \ref{sec:sol} can lead to similar construction of global, weak solutions for $\lambda\in(-2/3,0).$
\end{remark}
The results stated thus far will be established for a family of smooth functions $u_0(x)\in C_{\mathbb{R}}^{\infty}(0,1)$ having, relative to the sign of $\lambda$, global extrema attained at finitely many points. If we next consider periodic $u_0^\prime(x)\in PC_{\mathbb{R}}(0,1),$ the class of mean-zero, piecewise constant functions, the following  holds instead:
\begin{theorem}
\label{thm:pc0}
For the initial boundary value problem (\ref{eq:nonhomo})-(\ref{eq:pbc}) with $u_0^\prime(\alpha)\in PC_{\mathbb{R}}(0,1)$ assume solutions are defined for all $t\in[0,T],\, T>0.$ Then no  $W^{1,\infty}(0,1)$ solution may exist for $T\geq t_*$, where $0<t_*<+\infty$ if $\lambda\in(-\infty,0)$, and $t_*=+\infty$ for $\lambda\in[0,+\infty)$. Further, $\lim_{t\uparrow t_*}\left\|u_x\right\|_1=+\infty$ when $\lambda\in(-\infty,-1)$ while 
\begin{equation*}
\lim_{t\uparrow t_*}\left\|u_x\right\|_p=
\begin{cases}
C,\,\,\,\,\,\,\,\,&-\frac{1}{p}\leq\lambda<0,
\\
+\infty,\,\,\,\,\,\,\,\,&-1\leq\lambda<-\frac{1}{p},
\end{cases}
\end{equation*}
for $p\geq1$ and where the positive constants $C$ depend on the choice of $\lambda$ and $p.$
\end{theorem}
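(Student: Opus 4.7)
The plan is to specialize the representation formula~(\ref{eq:mainsolu}) derived in \S\ref{sec:sol} to piecewise constant initial data and read off each quantity of interest as a finite sum. If $u_0'$ takes mean-zero values $c_1,\dots,c_N$ on disjoint subintervals of $[0,1]$ of lengths $L_1,\dots,L_N$, then the functional
\[
K(\mu) := \int_0^1 (1-\lambda\mu u_0'(\alpha))^{-1/\lambda}\,d\alpha \;=\; \sum_{k=1}^N L_k\,(1-\lambda\mu c_k)^{-1/\lambda}
\]
and its derivative $K'(\mu)$ are finite sums. The representation formula yields $\gamma_\alpha(\alpha,t) = K(\mu)^{-1}(1-\lambda\mu u_0'(\alpha))^{-1/\lambda}$ and $u_x\circ\gamma = K(\mu)^{-2\lambda}\bigl[u_0'(\alpha)/(1-\lambda\mu u_0'(\alpha)) - K'(\mu)/K(\mu)\bigr]$, with the time reparametrization $\mu = \mu(t)$ determined by $dt/d\mu = K(\mu)^{2\lambda}$.

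To locate $t_*$ I would split on the sign of $\lambda$. For $\lambda\ge 0$, set $M_0 = \max_k c_k$ and $\eta_* = 1/(\lambda M_0)$ (with $\eta_* = +\infty$ when $\lambda=0$). Near $\mu = \eta_*$ the sum $K(\mu)$ is dominated by the max-intervals, giving $K(\mu)^{2\lambda}\sim L_{\max}^{2\lambda}(1-\lambda\mu M_0)^{-2}$, whose integral is non-integrable at $\eta_*$: hence $t_* = +\infty$. For $\lambda<0$, set $m_0 = \min_k c_k<0$ and $\eta_* = -1/(\lambda m_0)$; then $(1-\lambda\mu u_0')^{-1/\lambda} = (1+|\lambda|\mu u_0')^{1/|\lambda|}$ stays bounded on $[0,\eta_*]\times[0,1]$ (and vanishes on min-intervals at $\mu = \eta_*$), so $K$ is bounded away from $0$ and $\infty$, and $t_* = \int_0^{\eta_*}K(\mu)^{2\lambda}\,d\mu < +\infty$.

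For the $L^p$ analysis when $\lambda<0$, the definition~(\ref{eq:energygen}) combined with the representation gives
\[
\|u_x\|_p^p \;=\; K(\mu)^{-(2\lambda p+1)}\sum_{k=1}^N L_k\,\Bigl|\tfrac{c_k}{1-\lambda\mu c_k}-\tfrac{K'(\mu)}{K(\mu)}\Bigr|^p (1-\lambda\mu c_k)^{-1/\lambda}.
\]
Writing $\epsilon := 1-\lambda\mu m_0 \to 0^+$, the $c_k>m_0$ summands remain bounded as $\mu\uparrow\eta_*$, so the asymptotics are controlled by the min-intervals. For $\lambda\in(-1,0)$ the ratio $K'/K$ stays finite, as the relevant integrand exponent $1/|\lambda|-1$ is positive, so the bracket behaves like $m_0/\epsilon$; for $\lambda\le -1$ the ratio $K'/K$ also diverges, but a rate comparison $\epsilon^{-1}\gg\epsilon^{1/|\lambda|-1}$ shows $m_0/\epsilon$ is still the dominant singularity. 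In every case the leading min-interval contribution scales as $|m_0|^p\epsilon^{-p+1/|\lambda|}$, which is finite precisely when $|\lambda|\le 1/p$, i.e.\ $\lambda\in[-1/p,0)$, and divergent otherwise: this yields the stated dichotomy. Specializing to $p=1$ handles the $\lambda<-1$ case, and the same min-interval lower bound $|u_x\circ\gamma|\gtrsim |m_0|/\epsilon$ proves $\|u_x\|_\infty\to+\infty$ as $\mu\uparrow\eta_*$, ruling out any $W^{1,\infty}(0,1)$ extension past $t_*$.

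The main technical obstacle is the case $\lambda\le -1$, where both $c_k/(1-\lambda\mu c_k)|_{c_k=m_0}$ and $K'(\mu)/K(\mu)$ diverge as $\mu\uparrow\eta_*$, raising the possibility of cancellation inside the bracket; showing that $\epsilon^{-1}$ strictly outgrows $\epsilon^{1/|\lambda|-1}$ so that no such cancellation occurs, and carefully tracking the sign, is the only non-routine bookkeeping step of the argument.
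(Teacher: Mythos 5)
Your proposal is correct and follows essentially the same route as the paper's proof of Theorem \ref{thm:pc}: specialize the representation formula to piecewise constant data so that $\bar{\mathcal{K}}_0=K$ and $\bar{\mathcal{K}}_1=K'$ become finite sums, deduce $t_*=+\infty$ for $\lambda\geq0$ and $t_*<+\infty$ for $\lambda<0$ from the non-integrability (resp.\ integrability) of $K(\mu)^{2\lambda}$ near $\eta_*$, and extract the $L^p$ dichotomy from the min-interval contribution $\epsilon^{-(p+\frac{1}{\lambda})}$ after checking that $\epsilon^{-1}$ dominates $K'/K$ when $\lambda\leq-1$. (Only a typo to fix: for $\lambda<0$ one has $\eta_*=\frac{1}{\lambda m_0}>0$, not $-\frac{1}{\lambda m_0}$.)
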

Finally, the case of periodic $u_0^{\prime\prime}\in PC_{\mathbb{R}}$ is briefly examined in \S \ref{subsubsec:pl} via a simple example. Our findings are summarized in Theorem \ref{thm:pq} below.
\begin{theorem}
\label{thm:pq}
For the initial boundary value problem (\ref{eq:nonhomo})-(\ref{eq:pbc}) with $u_0^{\prime\prime}(\alpha)\in PC_{\mathbb{R}}(0,1)$ and $\lambda\in\mathbb{R}\backslash[0,1/2],$ there are blow-up solutions. Specifically, when $\lambda\in(1/2,+\infty),$ solutions can undergo a two-sided, everywhere blow-up in finite-time, whereas for $\lambda\in(-\infty,0),$ divergence of the minimum to negative infinity can occur at a finite number of locations.
\end{theorem}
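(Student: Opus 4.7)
The plan is to establish Theorem \ref{thm:pq} by an explicit construction, as advertised. I would fix a simple representative: take $u_0'(\alpha)$ to be a mean-zero, piecewise-linear tent function on $[0,1]$ with global maximum $M_0>0$ at an isolated point $\overline\alpha$ and global minimum $m_0<0$ at an isolated point $\underline\alpha$, so that $u_0''(\alpha)\in PC_{\mathbb{R}}(0,1)$ automatically and the finiteness-of-extrema hypothesis behind the definitions of $M(t), m(t)$ is retained. All the quantities that enter the representation formula (\ref{eq:mainsolu}) and the time $t_*$ in (\ref{eq:assympttwo}) can then be computed or estimated in closed form for this example.

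For $\lambda\in(1/2,+\infty)$, I would analyse the inner integral
\begin{equation*}
J(\mu)\equiv\int_0^1\frac{d\alpha}{(1-\lambda\mu u_0'(\alpha))^{1/\lambda}},
\end{equation*}
which first becomes singular at $\mu=\eta_*=(\lambda M_0)^{-1}$, the singularity being generated by a neighbourhood of $\overline\alpha$. Since $u_0'$ is piecewise linear, $u_0'(\alpha)-M_0\sim -c|\alpha-\overline\alpha|$ near $\overline\alpha$, and direct integration gives $J(\mu)\sim A\,(\eta_*-\mu)^{1-1/\lambda}+B$ as $\mu\uparrow\eta_*$ (the leading term being absent once $\lambda>1$). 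Consequently $J(\mu)^{2\lambda}\sim(\eta_*-\mu)^{2\lambda-2}$, which is integrable on $[0,\eta_*]$ exactly when $2\lambda-2>-1$, i.e. when $\lambda>1/2$. Thus $t_*<+\infty$. Feeding this back into the representation formula for $u_x\circ\gamma$ reproduces the mechanism behind Theorem \ref{thm:maintheorem1}(\ref{item:blowtwo}): the nonlocal contribution $I(t)$ forces two-sided blow-up at every $\alpha$, while the local factor at $\overline\alpha$ and $\underline\alpha$ sends $M(t)\to+\infty$ and $m(t)\to-\infty$, respectively.

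For $\lambda\in(-\infty,0)$ the exponent $1/\lambda$ is negative, so $(1-\lambda\mu u_0')^{-1/\lambda}$ is a positive power of a positive quantity and $J(\mu)$ stays bounded as $\mu\uparrow\eta_*=(\lambda m_0)^{-1}>0$. The integrability issue does not arise, and $t_*$ is manifestly finite. Away from the corner $\underline\alpha$ the denominator in the pointwise formula for $u_x(\gamma(\alpha,t),t)$ is bounded below, so $u_x$ remains finite at every such $\alpha$; only at the isolated corners $\underline\alpha_j$ (finitely many, since $u_0''\in PC_{\mathbb{R}}$) does the local factor degenerate, yielding the one-sided discrete divergence $m(t)\to-\infty$ described in Theorem \ref{thm:maintheorem1}(\ref{item:blowone}).

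The main obstacle is legitimising the use of the representation formulae of \S\ref{sec:sol}, which are derived for smooth data, on the present class where $u_0'\in W^{1,\infty}$ but $u_0''$ has jumps; this I would handle by a mollification/approximation argument, passing to the limit in integrals that are uniformly absolutely convergent up to $t<t_*$. A subtler point is the shift in threshold from $\lambda=1$ in the smooth setting (Theorem \ref{thm:maintheorem1}(\ref{item:global})) to $\lambda=1/2$ here: this comes precisely from replacing the quadratic Taylor behaviour $u_0'(\alpha)-M_0\sim -c(\alpha-\overline\alpha)^2$ by the Lipschitz behaviour $u_0'(\alpha)-M_0\sim -c|\alpha-\overline\alpha|$ in the evaluation of $J(\mu)$, and one should verify that the exceptional interval $[0,1/2]$ is sharp for the chosen example, since the theorem only asserts the existence of blow-up data, not sharpness in $\lambda$.
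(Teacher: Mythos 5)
Your proposal is correct and follows essentially the same route as the paper, which likewise proves Theorem \ref{thm:pq} by exhibiting an explicit piecewise-quadratic $u_0$ (see (\ref{eq:pieceu0})), whose $u_0'$ is exactly your piecewise-linear tent profile, evaluating $\bar{\mathcal{K}}_0$ and $\bar{\mathcal{K}}_1$ in closed form, and reading off from (\ref{eq:mainsolu}) and (\ref{eq:etaivp}) that the local linear (rather than quadratic) behaviour of $u_0'$ near $\overline\alpha$ shifts the blow-up threshold from $\lambda=1$ to $\lambda=1/2$. The only detail you leave implicit is the companion asymptotic $\bar{\mathcal{K}}_1(t)\sim\frac{\lambda}{2}\mathcal{J}(\overline\alpha,t)^{-1/\lambda}$, which is what actually drives the two-sided divergence in (\ref{eq:mainsolu}) for $\lambda\in(1/2,+\infty)$ (in particular, $m(t)\to-\infty$ comes from this nonlocal ratio, not from any local degeneracy at $\underline\alpha$), but this is the same routine integration you already carry out for $\bar{\mathcal{K}}_0$.
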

\begin{remark}
In addition to providing an approach for the case $\lambda\in(1,+\infty)$ and giving  a more detailed description of the $L^p$ regularity of solutions, the advantage of having the solution formula  (\ref{eq:mainsolu}) available is that conditions such as (\ref{eq:wunschcond}) and (\ref{eq:okamotocond}), though sufficient for blow-up, will not be necessary in our future arguments. 
\end{remark}

\section{The General Solution}
\label{sec:sol}
We now establish our solution formulae for (\ref{eq:nonhomo})-(\ref{eq:pbc}). Given $\lambda\in\mathbb{R}\backslash\{0\},$ equations (\ref{eq:nonhomo})i), iii) admit a second-order, linear, ordinary differential equation for the jacobian $\gamma_{\alpha}(\alpha,t).$ The case $\lambda=0$ will be considered separately in appendix A. In the reformulated problem, a general solution  is constructed
which shows $u_x(\gamma(\alpha,t),t)$ to satisfy (\ref{eq:nonhomo})i) along characteristics, namely
\begin{equation}
\label{eq:ref}
\frac{d}{dt}(u_x(\gamma(\alpha,t),t))-\lambda u_x(\gamma(\alpha,t),t)^2=I(t).
\end{equation}
Since $\dot\gamma(\alpha,t)=u(\gamma(\alpha,t),t),$
\begin{equation}
\label{eq:jacid}
\begin{split}
\dot\gamma_{\alpha}=(u_x\circ\gamma)\cdot\gamma_{\alpha}
\end{split}
\end{equation}
therefore, using (\ref{eq:nonhomo}) and (\ref{eq:jacid}),
\begin{equation}
\label{eq:chain}
\begin{split}
\ddot\gamma_\alpha&=(u_{xt}+uu_{xx})\circ\gamma\cdot\gamma_\alpha+(u_x\circ\gamma)\cdot\dot\gamma_\alpha
\\
&=(u_{xt}+uu_{xx})\circ\gamma\cdot\gamma_\alpha+u_x^2\circ\gamma\cdot\gamma_\alpha
\\
&=(\lambda+1)\left(u_x^2\circ\gamma-\int_0^1{u_x^2dx}\right)\cdot\gamma_\alpha
\\
&=(\lambda+1)\left((\gamma^{-1}_{\alpha}\cdot\dot\gamma_{\alpha})^2-\int_0^1{u_x^2dx}\right)\cdot\gamma_\alpha\,.
\end{split}
\end{equation}
For $I(t)=-(\lambda+1)\int_0^1{u_x^2dx}$ and $\lambda\in\mathbb{R}\backslash\{0\},$ then
\begin{equation}
\label{eq:jacid2}
\begin{split}
I(t)=\frac{\ddot\gamma_{\alpha}\cdot\gamma_{\alpha}-(\lambda+1)\cdot\dot\gamma_{\alpha}^{\,2}}{\gamma_{\alpha}^{\,2}}=-\frac{\gamma_{\alpha}^{\,\lambda}\cdot(\gamma_{\alpha}^{-\lambda})^{\ddot{}}}{\lambda}
\end{split}
\end{equation}
and so
\begin{equation}
\label{eq:jacprob}
\begin{split}
(\gamma_{\alpha}^{-\lambda})^{\ddot{}}+\lambda\gamma_{\alpha}^{-\lambda}\,I(t)=0.
\end{split}
\end{equation}
Setting
\begin{equation}
\label{eq:lin}
\begin{split}
\omega(\alpha,t)=\gamma_{\alpha}(\alpha,t)^{-\lambda}
\end{split}
\end{equation}
yields
\begin{equation}
\label{eq:nonhomo2}
\begin{split}
\ddot\omega(\alpha,t)+\lambda I(t)\omega(\alpha,t)=0,
\end{split}
\end{equation}
an ordinary differential equation parametrized by $\alpha$. Suppose we have two linearly independent solutions $\phi_1(t)$ and $\phi_2(t)$ to (\ref{eq:nonhomo2}), satisfying $\phi_1(0)=\dot{\phi}_2(0)=1$, $\dot{\phi}_1(0)=\phi_2(0)=0$. Then by Abel's formula,  $\text{W}(\phi_1(t),\phi_2(t))=1,\, t\geq 0$, where W$(g,h)$ denotes the wronskian of $g$ and $h.$ We look for solutions of (\ref{eq:nonhomo2}), satisfying appropriate initial data, of the form
\begin{equation}
\label{eq:unonhomosol2}
\begin{split}
\omega(\alpha,t)=c_1(\alpha)\phi_1(t)+c_2(\alpha)\phi_2(t),
\end{split}
\end{equation}
where reduction of order allows us to write $\phi_2(t)$ in terms of $\phi_1(t)$ as
$$\phi_2(t)=\phi_1(t)\int_0^t\frac{ds}{\phi_1^2(s)}.$$
Since $\dot{\omega}=-\lambda\gamma_\alpha^{-(\lambda+1)}\dot{\gamma_\alpha}$ by (\ref{eq:lin}) and $\gamma_\alpha(\alpha,0)=1,$ then $\omega(\alpha, 0)=1$ and $\dot{\omega}(\alpha, 0)=-\lambda u'_0(\alpha)$, from which $c_1(\alpha)$ and $c_2(\alpha)$ are obtained. Combining these results reduces (\ref{eq:unonhomosol2}) to
\begin{equation}
\label{eq:compat}
\begin{split}
\omega(\alpha,t)=\phi_1(t)\left(1-\lambda \eta(t)u_0'(\alpha)\right),\,\,\,\,\,\,\,\,\,\,\,\,\eta(t)=\int_0^t\frac{ds}{\phi_1^2(s)}.
\end{split}
\end{equation}
Now, (\ref{eq:lin}) and (\ref{eq:compat}) imply
\begin{equation}
\label{eq:jaco}
\begin{split}
\gamma_{\alpha}(\alpha,t)=\left(\phi_1(t)\mathcal{J}(\alpha,t)\right)^{-\frac{1}{\lambda}}, 
\end{split}
\end{equation}
where 
\begin{equation}
\label{eq:J}
\begin{split}
\mathcal{J}(\alpha,t)=1-\lambda\eta(t)u_0^\prime(\alpha),\,\,\,\,\,\,\,\,\,\,\,\,\,\,\,\,\mathcal{J}(\alpha,0)=1,
\end{split}
\end{equation}
however, uniqueness of solution to (\ref{eq:cha}) and periodicity of $u$ require
\begin{equation}
\label{eq:percha}
\begin{split}
\gamma(\alpha+1,t)-\gamma(\alpha,t)=1
\end{split}
\end{equation}
for as long as $u$ is defined. Spatially integrating (\ref{eq:jaco}) therefore yields 
\begin{equation}
\label{eq:phi1}
\phi_1(t)=\left(\int_0^1{\frac{d\alpha}{\mathcal{J}(\alpha,t)^{\frac{1}{\lambda}}}}\right)^{\lambda},
\end{equation}
and so, if we set
\begin{equation}
\label{eq:def}
\begin{split}
\mathcal{K}_i(\alpha, t)=\frac{1}{\mathcal{J}(\alpha,t)^{i+{\frac{1}{\lambda}}}},\,\,\,\,\,\,\,\,\,\,\bar{\mathcal{K}}_i(t)=\int_0^1{\frac{d\alpha}{\mathcal{J}(\alpha,t)^{i+\frac{1}{\lambda}}}}
\end{split}
\end{equation}
for $i=0,1,2,...,$ we can write $\gamma_\alpha$ in the form
\begin{equation}
\label{eq:sum}
\gamma_\alpha={\mathcal K}_0/{\bar{\mathcal K}}_0.
\end{equation}
As a result of using (\ref{eq:jacid}) and (\ref{eq:sum}), we obtain 
\begin{equation}
\label{eq:finalsolu}
\begin{split}
u_x(\gamma(\alpha,t),t)=
\dot\gamma_\alpha(\alpha, t)/\gamma_\alpha(\alpha,t)=(\ln({\mathcal{K}_0/\bar{\mathcal K}_0}))^{{}^.}\,.
\end{split}
\end{equation}
In addition, differentiating (\ref{eq:compat})ii) gives
\begin{equation}
\label{eq:etaivp}
\begin{split}
\dot{\eta}(t)=\left(\int_0^1{\frac{d\alpha}{\mathcal{J}(\alpha,t)^{\frac{1}{\lambda}}}}\right)^{-2\lambda},\,\,\,\,\,\,\,\,\,\,\,\eta(0)=0,
\end{split}
\end{equation}   
from which it follows that the existence of an eventual finite blow-up time $t_*>0$ will depend, in part, upon convergence of the integral
\begin{equation}
\label{eq:assympt}
\begin{split}
t(\eta)=\int_0^{\eta}{\left(\int_0^1{\frac{d\alpha}{(1-\lambda\mu u_0^\prime(\alpha))^{\frac{1}{\lambda}}}}\right)^{2\lambda}\,d\mu}
\end{split}
\end{equation}
as $\eta\uparrow\eta_*$ for $\eta_*>0$ to be defined. In an effort to simplify the following  arguments, we point out that (\ref{eq:finalsolu}) can be  rewritten in a slightly more useful form. The result is
\begin{equation}
\label{eq:mainsolu}
\begin{split}
u_x(\gamma(\alpha,t),t)=\frac{1}{\lambda\eta(t){\bar{\mathcal K}_0(t)}^{2\lambda}}
\left(\frac{1}{\mathcal{J}(\alpha, t)}-\frac{\bar{\mathcal{K}}_1(t)}{\bar{\mathcal K}_0(t)}
\right).
\end{split}
\end{equation}
This is derived as follows. From (\ref{eq:def}) and (\ref{eq:finalsolu}),
\begin{equation}
\label{eq:finalsolu22}
u_x(\gamma(\alpha,t),t)=\frac{1}{\bar{\mathcal{K}}_0(t)^{^{2\lambda}}}\left(\frac{u_0'(\alpha)}{\mathcal{J}(\alpha,t)}-\frac{1}{\bar{\mathcal{K}}_0(t)}\int_0^1{\frac{u_0'(\alpha)\,d\alpha}{\mathcal{J}(\alpha,t)^{1+\frac{1}{\lambda}}}}\right).
\end{equation}
However  
\begin{equation}
\label{eq:alt1}
\frac{u_0^\prime(\alpha)}{\mathcal{J}(\alpha,t)}=\frac{1}{\lambda\eta(t)}\left(\frac{1}{\mathcal{J}(\alpha,t)}-1\right),
\end{equation}
by (\ref{eq:J}), and so
\begin{equation}
\label{eq:alt2}
\int_0^1{\frac{u_0^\prime(\alpha)\,d\alpha}{\mathcal{J}(\alpha,t)^{1+\frac{1}{\lambda}}}}=\frac{\bar{\mathcal{K}}_1(t)-\bar{\mathcal{K}}_0(t)}{\lambda\eta(t)}.
\end{equation}
Substituting (\ref{eq:alt1}) and (\ref{eq:alt2}) into (\ref{eq:finalsolu22}) yields (\ref{eq:mainsolu}). Finally, assuming sufficient smoothness, we may use (\ref{eq:sum}) and (\ref{eq:mainsolu}) to obtain (\cite{Saxton1},\,\cite{Wunsch1})
\begin{equation}
\label{eq:preserv1}
\begin{split}
u_{xx}(\gamma(\alpha,t),t)=u_0^{\prime\prime}(\alpha)(\gamma_{\alpha}(\alpha,t))^{2\lambda-1}.
\end{split}
\end{equation}
Equation  (\ref{eq:preserv1}) implies that as long as a solution exists it will maintain its initial concavity profile. Also, since the exponent above changes sign through $\lambda=1/2,$ blow-up implies, relative to the value of $\lambda,$  either vanishing or divergence of the jacobian. More explicitly, (\ref{eq:sum}) and (\ref{eq:preserv1}) yield
\begin{equation}
\label{eq:seconder}
\begin{split}
u_{xx}(\gamma(\alpha,t),t)=\frac{u_0^{\prime\prime}(\alpha)}{\mathcal{J}(\alpha,t)^{2-\frac{1}{\lambda}}}\left(\int_0^1{\frac{d\alpha}{\mathcal{J}(\alpha,t)^{\frac{1}{\lambda}}}}\right)^{1-2\lambda}.
\end{split}
\end{equation}

\section{Global Estimates and Blow-up}
\label{sec:blow}
In \S \ref{subsubsec:alpha>1/2}-\ref{subsubsec:easyalpha} we establish Theorem \ref{thm:maintheorem1} and Corollary \ref{coro:coro1}, while Theorem \ref{thm:lpintro} is proved in \S \ref{subsubsec:lpregsmooth}. Theorems \ref{thm:pc0} and \ref{thm:pq} are proved in \S \ref{subsubsec:pc} and \S \ref{subsubsec:pl}, respectively.

For $M_0>0>m_0$ as in (\ref{eq:max}) and (\ref{eq:min22}), set
\begin{equation}
\label{eq:defeta*}
\begin{split}
\eta_*=
\begin{cases}
\frac{1}{\lambda M_0},\,\,\,\,\,\,\,&\lambda>0,
\\
\frac{1}{\lambda m_0},\,\,\,\,\,\,\,&\lambda<0.
\end{cases}
\end{split}
\end{equation}
Then, as $\eta\uparrow \eta_*,$ the space-dependent term in (\ref{eq:mainsolu}) will diverge for certain choices of $\alpha$ and not at all for others. Specifically, for $\lambda>0,$ $\mathcal{J}(\alpha,t)^{-1}$ blows up  earliest as $\eta\uparrow \eta_*$ at $\alpha=\overline\alpha_i,$ since
\begin{equation*}
\begin{split}
\mathcal{J}(\overline\alpha_i,t)^{-1}=(1-\lambda\eta(t)M_0)^{-1}\to+\infty\,\,\,\,\,\,\,\text{as}\,\,\,\,\,\,\,\eta\uparrow\eta_*=\frac{1}{\lambda M_0}.
\end{split}
\end{equation*}
Similarly for $\lambda<0,\, \mathcal{J}(\alpha,t)^{-1}$ diverges first at $\alpha=\underline\alpha_j$ and 
\begin{equation*}
\begin{split}
\mathcal{J}(\underline\alpha_j,t)^{-1}=(1-\lambda\eta(t)m_0)^{-1}\to+\infty\,\,\,\,\,\,\,\text{as}\,\,\,\,\,\,\,\eta\uparrow\eta_*=\frac{1}{\lambda m_0}.
\end{split}
\end{equation*}
However, blow-up of (\ref{eq:mainsolu}) does not necessarily follow from this; we will need to estimate the behaviour of the time-dependent integrals 
\begin{equation*}
\begin{split}
\bar{\mathcal{K}}_0(t)=
\int_0^1{\frac{d\alpha}{\mathcal{J}(\alpha,t)^{\frac{1}{\lambda}}}},\,\,\,\,\,\,\,\,\,\,\,\,\,\,\,\,\,\,\,\,\,\,\,\,\,\,\bar{\mathcal{K}}_1(t)=\int_0^1{\frac{d\alpha}{\mathcal{J}(\alpha,t)^{1+\frac{1}{\lambda}}}}
\end{split}
\end{equation*}
as $\eta\uparrow\eta_*.$ To this end, in some of the proofs we find convenient the use of the Gauss hypergeometric series (\cite{Barnes1}, \cite{Magnus1}, \cite{Gasper1})
\begin{equation}
\label{eq:2f1}
\begin{split}
{}_2F_1\left[a,b;c;z\right]\equiv\sum_{k=0}^{\infty}\frac{\left(a\right)_k(b)_k}{\left(c\right)_k\,k!}z^k,\,\,\,\,\,\,\,\,\,\,\,\,\,\,\,\lvert z\rvert< 1
\end{split}
\end{equation} 
for $c\notin\mathbb{Z}^-\cup\{0\}$ and $(x)_k,\, k\in\mathbb{N}\cup\{0\}$, the Pochhammer symbol $(x)_0=1$, $(x)_k=x(x+1)...(x+k-1).$ Also, we will make use of the following results (\cite{Magnus1}, \cite{Gasper1}):
\begin{proposition}
\label{lem:analcont}
Suppose $\lvert\text{arg}\left(-z\right)\rvert<\pi$ and $a,b,c,a-b\notin\mathbb{Z}.$ Then, the analytic continuation for $\lvert z\rvert>1$ of the series (\ref{eq:2f1}) is given by 
\begin{equation}
\label{eq:analform}
\begin{split}
{}_2F_1[a,b;c;z]=&\frac{\Gamma(c)\Gamma(a-b)(-z)^{-b}{}_2F_1[b,1+b-c;1+b-a;z^{-1}]}{\Gamma(a)\Gamma(c-b)}
\\
&+\frac{\Gamma(c)\Gamma(b-a)(-z)^{-a}{}_2F_1[a,1+a-c;1+a-b;z^{-1}]}{\Gamma(b)\Gamma(c-a)}
\end{split}
\end{equation} 
where $\Gamma(\cdot)$ denotes the standard gamma function.
\end{proposition}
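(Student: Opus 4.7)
The plan is to derive \eqref{eq:analform} from the Mellin--Barnes integral representation of the hypergeometric function. First I would record that, for $|z|<1$ and along a vertical contour $L$ in the $s$-plane separating the poles $s=0,1,2,\ldots$ of $\Gamma(-s)$ from the poles of $\Gamma(a+s)\Gamma(b+s)$ at $s=-a-n,\,-b-n$,
\begin{equation*}
{}_2F_1[a,b;c;z]=\frac{\Gamma(c)}{\Gamma(a)\Gamma(b)}\cdot\frac{1}{2\pi i}\int_L \frac{\Gamma(a+s)\Gamma(b+s)\Gamma(-s)}{\Gamma(c+s)}(-z)^s\,ds.
\end{equation*}
This identity is verified by closing $L$ to the right and collecting the simple residues of $\Gamma(-s)$ at $s=n$, which reproduce the series \eqref{eq:2f1} term by term.

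To obtain the continuation for $|z|>1$, I would instead close $L$ to the \emph{left}. The hypothesis $a,b,a-b\notin\mathbb{Z}$ ensures that all poles of $\Gamma(a+s)\Gamma(b+s)$ in the left half-plane are simple and mutually disjoint. Using $\mathrm{Res}_{s=-a-n}\Gamma(a+s)=(-1)^n/n!$ together with the reflection identities $\Gamma(b-a-n)=(-1)^n\Gamma(b-a)/(1+a-b)_n$ and $\Gamma(c-a-n)=(-1)^n\Gamma(c-a)/(1+a-c)_n$, the residue of the integrand at $s=-a-n$ works out to
\begin{equation*}
\frac{(-1)^n\,\Gamma(a)\,\Gamma(b-a)\,(a)_n\,(1+a-c)_n}{\Gamma(c-a)\,(1+a-b)_n\,n!}\,(-z)^{-a-n}.
\end{equation*}
Summing over $n\geq 0$, rewriting $(-z)^{-a-n}=(-z)^{-a}(-1)^n z^{-n}$, and multiplying by $\Gamma(c)/[\Gamma(a)\Gamma(b)]$ produces precisely the first term on the right of \eqref{eq:analform}. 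A symmetric argument with $a$ and $b$ interchanged, handling the residues at $s=-b-n$, yields the second term.

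The main obstacle is justifying the contour closure, that is, showing that the integral over the semicircular arc at infinity in the left half-plane vanishes. Here the hypothesis $|z|>1$ works in our favor, since $(-z)^s=\exp(s\log(-z))$ decays like $|z|^{\mathrm{Re}(s)}$ as $\mathrm{Re}(s)\to-\infty$. The $\Gamma$-quotient $\Gamma(a+s)\Gamma(b+s)\Gamma(-s)/\Gamma(c+s)$ is controlled by Stirling's asymptotic: along vertical strips it decays like $|\mathrm{Im}(s)|^{a+b-c-1}e^{-\pi|\mathrm{Im}(s)|}$, while on horizontal pieces the exponential smallness of $(-z)^s$ absorbs the polynomial growth of the Gamma factors; the condition $|\arg(-z)|<\pi$ keeps $(-z)^s$ uniformly bounded in argument. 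Choosing expanding semicircles whose radii avoid the parameter-dependent poles makes these estimates rigorous and closes the proof. As an independent check, one could bypass the integral altogether and argue directly from the Gauss hypergeometric ODE: its two Frobenius solutions at $z=\infty$ have exponents $-a$ and $-b$, and are precisely $(-z)^{-a}{}_2F_1[a,1+a-c;1+a-b;z^{-1}]$ and $(-z)^{-b}{}_2F_1[b,1+b-c;1+b-a;z^{-1}]$; the connection constants could then be pinned down via Gauss's formula ${}_2F_1[a,b;c;1]=\Gamma(c)\Gamma(c-a-b)/(\Gamma(c-a)\Gamma(c-b))$ together with a second limiting evaluation, shifting the difficulty away from contour analysis.
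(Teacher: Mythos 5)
The paper does not prove Proposition \ref{lem:analcont} at all: it is quoted as a classical connection formula with references to Erd\'elyi et al.\ and Gasper--Rahman, so there is no in-paper argument to compare against. Your Mellin--Barnes derivation is the standard textbook proof of exactly this formula (it is essentially the one in the cited sources and in Whittaker--Watson), and it is correct in outline: the Barnes integral represents ${}_2F_1$, closing to the right recovers the series \eqref{eq:2f1}, and closing to the left for $\lvert z\rvert>1$ picks up the two families of simple poles at $s=-a-n$ and $s=-b-n$, which the hypotheses $a,b,a-b\notin\mathbb{Z}$ keep distinct and separated from the poles of $\Gamma(-s)$. Your residue computation at $s=-a-n$ is right, though note it produces the \emph{second} term of \eqref{eq:analform} as displayed (the one carrying $(-z)^{-a}$ and $\Gamma(b-a)/[\Gamma(b)\Gamma(c-a)]$), with the $s=-b-n$ residues giving the first; this is only a labeling slip. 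The one place where your sketch would need to be firmed up into a full proof is the arc estimate: $\Gamma(-s)$ itself grows as $\operatorname{Re}(s)\to-\infty$, so the standard treatment rewrites the $\Gamma$-quotient via the reflection formula (or uses carefully chosen expanding contours threaded between the poles) before invoking Stirling, and it is the combination of $\lvert z\rvert^{\operatorname{Re}(s)}\to0$ for $\lvert z\rvert>1$ with the $e^{-(\pi-\lvert\arg(-z)\rvert)\lvert\operatorname{Im}(s)\rvert}$ decay that kills the arc. You have correctly identified both mechanisms and where each hypothesis is used, so this is a matter of bookkeeping rather than a gap in the idea. Your alternative suggestion---matching Frobenius solutions at $z=\infty$ and pinning the connection coefficients---is also a legitimate classical route and would work, at the cost of two limit evaluations in place of the contour estimate.
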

\begin{lemma}
\label{prop:prop}
Suppose $b\in(-\infty,2)\backslash\{1/2\},\,\,0\leq\left|\beta-\beta_0\right|\leq1$ and $\epsilon\geq C_0$ for some $C_0>0.$ Then 
\begin{equation}
\label{eq:derseries}
\begin{split}
\frac{1}{\epsilon^b}\,\frac{d}{d\beta}\left((\beta-\beta_0){}_2F_1\left[\frac{1}{2},b;\frac{3}{2};-\frac{C_0(\beta-\beta_0)^2}{\epsilon}\right]\right)=(\epsilon+C_0(\beta-\beta_0)^2)^{-b}.
\end{split}
\end{equation} 
\end{lemma}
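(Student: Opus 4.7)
The plan is to reduce (\ref{eq:derseries}) by a simple rescaling to the standard hypergeometric identity $\frac{d}{dy}\bigl[y\cdot{}_2F_1[\tfrac12,b;\tfrac32;-y^2]\bigr]=(1+y^2)^{-b}$, and then prove that identity by termwise differentiation of the defining power series (\ref{eq:2f1}). First I set $y=\sqrt{C_0/\epsilon}\,(\beta-\beta_0)$, so that $-C_0(\beta-\beta_0)^2/\epsilon=-y^2$ and $\beta-\beta_0=y\sqrt{\epsilon/C_0}$. Because $d/d\beta=\sqrt{C_0/\epsilon}\,d/dy$, the factors $\sqrt{\epsilon/C_0}$ and $\sqrt{C_0/\epsilon}$ cancel, and the left-hand side of (\ref{eq:derseries}) collapses to $\epsilon^{-b}\frac{d}{dy}\bigl[y\cdot{}_2F_1[\tfrac12,b;\tfrac32;-y^2]\bigr]$, while the right-hand side becomes $\epsilon^{-b}(1+y^2)^{-b}$. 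It therefore suffices to verify the displayed reduced identity.

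To verify it I expand via (\ref{eq:2f1}) to write $y\cdot{}_2F_1[\tfrac12,b;\tfrac32;-y^2]=\sum_{k\ge0}\frac{(1/2)_k(b)_k}{(3/2)_k\,k!}(-1)^k y^{2k+1}$, and differentiate termwise. This produces a factor of $2k+1$ which cancels against the elementary Pochhammer ratio $(1/2)_k/(3/2)_k=1/(2k+1)$---itself immediate from $(1/2)_k=(2k-1)!!/2^k$ and $(3/2)_k=(2k+1)!!/2^k$---leaving $\sum_{k\ge0}\frac{(b)_k}{k!}(-y^2)^k$. By the binomial series this sum equals $(1+y^2)^{-b}$ for $|y|<1$, which is exactly what is required.

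The remaining issue is the range. Under the hypotheses $\epsilon\ge C_0$ and $|\beta-\beta_0|\le 1$ one has $|y|\le 1$, and on the open set $|y|<1$ both power series converge absolutely and uniformly on compact subsets, so the termwise differentiation above is legitimate. The only mildly delicate case is the boundary $|y|=1$, i.e.\ $\epsilon=C_0$ and $|\beta-\beta_0|=1$; since $b<2$ gives $c-a-b=1-b>-1$, the hypergeometric series evaluated at $-1$ still converges and both sides extend continuously there, alternatively one may invoke the analytic continuation given by Proposition \ref{lem:analcont}. The excluded value $b=1/2$ plays no role in this particular step (it is presumably reserved because of its distinguished behaviour elsewhere in the blow-up analysis). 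Consequently the main ``obstacle'' is purely bookkeeping: the substantive content is the telescoping of the differentiated series to a binomial series via the Pochhammer identity, and the rest is routine verification that the argument survives at the boundary of the convergence disk.
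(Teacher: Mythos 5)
Your proof is correct, but it takes a genuinely different route from the paper's. The paper (appendix B) works at the level of ${}_2F_1$ identities: it applies the derivative formula $\frac{d}{dz}{}_2F_1[a,b;c;z]=\frac{ab}{c}\,{}_2F_1[a+1,b+1;c+1;z]$ with $a=\tfrac12$, $c=\tfrac32$, then uses two contiguous relations to rewrite $z\,{}_2F_1[a+1,b+1;c+1;z]$ and ${}_2F_1[a,b;c;z]$ in terms of ${}_2F_1[a,b+1;c;z]$ and ${}_2F_1[b,c;c;z]=(1-z)^{-b}$, after which the ${}_2F_1[a,b+1;c;z]$ terms cancel. You instead rescale to normalize $\epsilon$ and $C_0$ away and differentiate the defining series termwise, using $(1/2)_k/(3/2)_k=1/(2k+1)$ to collapse the result to the binomial series for $(1+y^2)^{-b}$. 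Your version is more elementary and self-contained, and it exposes something the paper's proof hides: the exclusion $b\neq\tfrac12$ is \emph{not} incidental to the paper's argument but is forced by it (the contiguous relations there involve division by $a-b=\tfrac12-b$), whereas your computation is valid for $b=\tfrac12$ as well, so you in fact prove slightly more than is claimed. The paper's route, on the other hand, uses only identities that persist under analytic continuation, so it is indifferent to whether the series argument lies in the unit disk. On that last point your treatment of the boundary case $|y|=1$ is the one slightly loose spot: rather than arguing about convergence of the series at $z=-1$ (where, note, the \emph{differentiated} series $\sum_k(b)_k(-1)^k/k!$ actually diverges for $b\geq1$), it is cleaner to observe that ${}_2F_1[a,b;c;\cdot]$ is analytic on $\mathbb{C}\setminus[1,\infty)$ and $z=-y^2\leq0$, so both sides of the reduced identity are real-analytic on a neighbourhood of $[-1,1]$ and the identity established for $|y|<1$ extends automatically. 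This is a cosmetic repair, not a gap in substance.
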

\begin{proof}
See appendix B.
\qquad\end{proof}

\subsection{A Class of Smooth Initial Data}
\label{subsec:mean}
In this section, we study finite-time blow-up of solutions to (\ref{eq:nonhomo})-(\ref{eq:pbc}) which arise from a class of mean-zero, smooth data. In \S \ref{subsubsec:alpha>1/2}, we consider parameter values $\lambda\in[0,+\infty)$ whereas the case $\lambda\in(-\infty,0)$ is studied in \S \ref{subsubsec:twocases} and \S \ref{subsubsec:easyalpha}. Finally, $L^p$ regularity of solutions is examined in \S \ref{subsubsec:lpregsmooth} for $p\in[1,+\infty)$. 

\subsubsection{\textbf{Global estimates for $\lambda\in[0,1]$ and blow-up for $\lambda\in(1,+\infty)$}}\hfill
\label{subsubsec:alpha>1/2}

In Theorem \ref{thm:theoremtwogen} below, we prove finite-time blow-up of $u_x$ in the $L^{\infty}$ norm for $\lambda\in(1,+\infty).$ In fact, we will find that the blow-up is two-sided and occurs everywhere in the domain, an event we will refer to as ``two-sided, everywhere blow-up.'' In contrast, for parameters $\lambda\in[0,1],$ we show that solutions persist globally in time. More particularly, these vanish as $t\to+\infty$ for $\lambda\in(0,1)$ but converge to a non-trivial steady-state if $\lambda=1.$ Finally, the behaviour of the jacobian (\ref{eq:sum}) is also studied. We refer to appendix A for the case $\lambda=0.$ 

\begin{theorem}
\label{thm:theoremtwogen}
Consider the initial boundary value problem (\ref{eq:nonhomo})-(\ref{eq:pbc}). There exist smooth, mean-zero initial data such that: 
\begin{enumerate}
	\item\label{it:case} For $\lambda\in(0,1],$ solutions persist globally in time. In particular, these vanish as $t\uparrow t_*=+\infty$ for $\lambda\in(0,1)$ but converge to a non-trivial steady-state if $\lambda=1.$
	\item\label{item:blowtwo2} For $\lambda\in(1,+\infty),$ there exists a finite $t_*>0$ such that both the maximum $M(t)$ and the minimum $m(t)$ diverge to $+\infty$ and respectively to $-\infty$ as $t\uparrow t_*.$ Moreover, $\lim_{t\uparrow t_*}u_x(\gamma(\alpha,t),t)=-\infty$ for $\alpha\notin\{\overline\alpha_i, \underline\alpha_j\}$ (two-sided, everywhere blow-up).
\end{enumerate}
Finally, for $t_*$ as above, the jacobian (\ref{eq:sum}) satisfies 
\begin{equation}
\label{eq:sumjacblowposlamb}
\lim_{t\uparrow t_*}\gamma_\alpha(\alpha,t)=
\begin{cases}
+\infty,\,\,\,\,\,\,\,\,\,\,\,\,\,&\alpha=\overline\alpha_i,\,\,\,\,\,\,\,\,\,\,\,\,\,\,\,\,\,\,\,\,\,\,\lambda\in(0,+\infty),
\\
0,\,\,\,\,\,\,\,\,\,\,\,\,\,&\alpha\neq\overline\alpha_i,\,\,\,\,\,\,\,\,\,\,\,\,\,\,\,\,\,\,\,\,\,\,\lambda\in(0,2],
\\
C,\,\,\,\,\,\,\,\,\,\,\,\,\,&\alpha\neq\overline\alpha_i,\,\,\,\,\,\,\,\,\,\,\,\,\,\,\,\,\,\,\,\,\,\lambda\in(2,+\infty)
\end{cases}
\end{equation}
where the positive constants $C$ depend on the choice of $\lambda$ and $\alpha\neq\overline\alpha_i.$
\end{theorem}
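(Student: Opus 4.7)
The plan is to read off every assertion directly from the representation (\ref{eq:mainsolu}) by performing an asymptotic analysis as $\eta\uparrow\eta_*=1/(\lambda M_0)$. I choose smooth, mean-zero $u_0$ so that $u_0'$ attains its greatest value $M_0>0$ at finitely many non-degenerate points $\overline\alpha_i$ (i.e.\ $u_0'''(\overline\alpha_i)<0$), and likewise its least value $m_0<0$ at finitely many non-degenerate $\underline\alpha_j$. A Taylor expansion then gives
\begin{equation*}
\mathcal{J}(\alpha,t)=\epsilon(t)+C_0(\alpha-\overline\alpha_i)^2+O\bigl((\alpha-\overline\alpha_i)^3\bigr),\qquad \epsilon(t):=1-\lambda\eta(t)M_0,
\end{equation*}
with $C_0=\frac{1}{2}\lambda\eta(t)|u_0'''(\overline\alpha_i)|>0$. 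Splitting $[0,1]$ into narrow neighbourhoods of the $\overline\alpha_i$ plus a remainder on which $\mathcal{J}$ stays bounded away from $0$, the local contributions to $\bar{\mathcal K}_0(t)$ and $\bar{\mathcal K}_1(t)$ are exactly of the form handled by Lemma \ref{prop:prop}, whose hypergeometric representation, combined with the analytic continuation in Proposition \ref{lem:analcont}, delivers their $\epsilon\to 0^+$ asymptotics.

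\textbf{Asymptotics, $t_*$, and the trichotomy.} The analysis yields, as $\epsilon\downarrow 0$,
\begin{equation*}
\bar{\mathcal K}_0(t)\sim\begin{cases}A_\lambda\,\epsilon^{1/2-1/\lambda},&\lambda\in(0,2),\\ A\,\log(1/\epsilon),&\lambda=2,\\ A_\lambda,&\lambda>2,\end{cases}\qquad \bar{\mathcal K}_1(t)\sim B_\lambda\,\epsilon^{-1/2-1/\lambda}\quad(\lambda>0),
\end{equation*}
for positive constants $A_\lambda,B_\lambda$. Feeding these into $\dot\eta=\bar{\mathcal K}_0^{-2\lambda}$ and $t_*=\int_0^{\eta_*}\bar{\mathcal K}_0(\mu)^{2\lambda}d\mu$ shows that $\bar{\mathcal K}_0^{2\lambda}\sim\epsilon^{\lambda-2}$ is $\mu$-integrable on $[0,\eta_*]$ iff $\lambda>1$; hence $t_*=+\infty$ for $\lambda\in(0,1]$ while $0<t_*<+\infty$ for $\lambda\in(1,+\infty)$. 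Substituting back into (\ref{eq:mainsolu}), the prefactor $\frac{1}{\lambda\eta\bar{\mathcal K}_0^{2\lambda}}$ is of order $\epsilon^{2-\lambda}$ for $\lambda\in(0,2)$ and $O(1)$ with a positive limit for $\lambda\geq 2$, while the bracket $\frac{1}{\mathcal J(\alpha,t)}-\frac{\bar{\mathcal K}_1}{\bar{\mathcal K}_0}$ is driven by $\bar{\mathcal K}_1/\bar{\mathcal K}_0=O(\epsilon^{-1})$, augmented by the singular $1/\epsilon$ only at $\alpha=\overline\alpha_i$. Combining these, the product has net order $\epsilon^{1-\lambda}$ at each fixed $\alpha$: for $\lambda\in(0,1)$ it vanishes everywhere, and the resulting bound on $u_x$ together with (\ref{eq:ref}) extends the solution for all $t\geq 0$ with $u_x\to 0$; for $\lambda=1$ the product is $O(1)$ and tends to an $\alpha$-dependent nonzero limit, yielding the non-trivial steady-state; and for $\lambda\in(1,+\infty)$ it diverges, positively at $\overline\alpha_i$ (where the $1/\epsilon$ term dominates $\bar{\mathcal K}_1/\bar{\mathcal K}_0$) and negatively for every other $\alpha$ (where $1/\mathcal J$ is bounded and only $-\bar{\mathcal K}_1/\bar{\mathcal K}_0$ survives). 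Hence $M(t)\to+\infty$ and $m(t)\to-\infty$ as $t\uparrow t_*$ with the everywhere, two-sided character claimed in (\ref{item:blowtwo2}), and (\ref{it:case}) likewise follows.

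\textbf{Jacobian and main obstacle.} The identity $\gamma_\alpha=\mathcal J(\alpha,t)^{-1/\lambda}/\bar{\mathcal K}_0(t)$ now gives (\ref{eq:sumjacblowposlamb}) by direct substitution: at $\alpha=\overline\alpha_i$ the numerator $\epsilon^{-1/\lambda}$ dominates $\bar{\mathcal K}_0$ in every case (trivially for $\lambda\geq 2$, and by a factor $\epsilon^{-1/2}$ for $\lambda\in(0,2)$), forcing $\gamma_\alpha\to+\infty$; for $\alpha\neq\overline\alpha_i$ the numerator stays bounded, so $\bar{\mathcal K}_0\to+\infty$ on $\lambda\in(0,2]$ produces the zero limit, while $\bar{\mathcal K}_0\to A_\lambda$ on $\lambda>2$ produces a positive constant $C=\mathcal J(\alpha,\eta_*)^{-1/\lambda}/A_\lambda$. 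The principal technical hurdle I anticipate is making the localisation in the first paragraph rigorous: one must control the cubic Taylor remainder near each $\overline\alpha_i$ uniformly as $\epsilon\downarrow 0$ so that it does not perturb the leading constants $A_\lambda,B_\lambda$, combine the contributions of the finitely many $\overline\alpha_i$ additively at leading order, and, most delicately, verify in the $\lambda=1$ case that the cancellation between the two $O(\epsilon^{-1})$ terms inside the bracket leaves an $\alpha$-dependent remainder of precisely the order needed, against the $O(\epsilon)$ prefactor, to produce a genuinely non-trivial limit for $u_x$.
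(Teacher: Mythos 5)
Your overall strategy is the paper's: localize $\bar{\mathcal K}_0,\bar{\mathcal K}_1$ near the non-degenerate maxima, extract their $\epsilon\downarrow0$ asymptotics, and read everything off from (\ref{eq:mainsolu}), (\ref{eq:etaivp}) and (\ref{eq:sum}). The asymptotic table you write down is correct and matches (\ref{eq:comple3}), (\ref{eq:alphaposgen7}), (\ref{eq:alphaposgen6}) and (\ref{eq:genalpha2}), and the integrability criterion for $\bar{\mathcal K}_0^{2\lambda}\sim\epsilon^{\lambda-2}$ is exactly how the paper separates $t_*=+\infty$ ($\lambda\leq1$) from $t_*<+\infty$ ($\lambda>1$). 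Two concrete points, however, need repair. First, you cannot invoke Lemma \ref{prop:prop} uniformly for all $\lambda>0$: the lemma requires $b<2$, and the exponents you need are $b=1/\lambda$ for $\bar{\mathcal K}_0$ (so only $\lambda>1/2$) and $b=1+1/\lambda$ for $\bar{\mathcal K}_1$ (so only $\lambda>1$). For $\lambda\in(0,1]$ the paper instead computes both integrals directly via the substitution $\alpha=\sqrt{\epsilon/\left|C_1\right|}\tan\theta+\overline\alpha$ and the Beta integral (\ref{eq:gamma1}); you would need to do the same, or extend the lemma.

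Second, and more seriously: at $\alpha=\overline\alpha_i$ with $\lambda\in(0,2)$ the two terms in the bracket of (\ref{eq:mainsolu}) are \emph{both} exactly of order $\epsilon^{-1}$, namely $\epsilon^{-1}$ and $(B_\lambda/A_\lambda)\,\epsilon^{-1}$; your claim that the $1/\epsilon$ term ``dominates'' $\bar{\mathcal K}_1/\bar{\mathcal K}_0$ is false there, and without evaluating the ratio you cannot determine the sign of $M(t)$'s divergence for $\lambda\in(1,2)$, the non-triviality of the $\lambda=1$ limit, or that $M(t)\to0^+$ for $\lambda\in(0,1)$. The paper resolves this with the Gamma-recurrence identity (\ref{eq:simplifygamma}), which gives $C_4/C_3=1-\lambda/2\in(0,1)$ for $\lambda\in(0,2)$, so the bracket at $\overline\alpha_i$ equals $\frac{\lambda}{2}\epsilon^{-1}(1+o(1))>0$. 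You flag the $\lambda=1$ cancellation as a remaining obstacle, but the same cancellation already occurs for every $\lambda\in(0,2)$, and this one-line computation is the missing step. Minor further slips that do not affect the conclusions: for $\lambda=2$ the prefactor is of order $(\log(1/\epsilon))^{-4}\to0$, not $O(1)$ with a positive limit; for $\lambda>2$ the divergence rates are $\epsilon^{-1}$ at $\overline\alpha_i$ and $\epsilon^{-(1/2+1/\lambda)}$ elsewhere, not $\epsilon^{1-\lambda}$; and for $\lambda=1$ the limit of $u_x$ off $\overline\alpha_i$ is the single constant $u_0'''(\overline\alpha)/(2\pi)^2$, not genuinely $\alpha$-dependent.
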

\begin{proof}
For simplicity, assume $M_0>0$ is attained at a single location\footnote[3]{The case of a finite number of $\overline\alpha_i\in[0,1]$ follow similarly.} $\overline\alpha\in(0,1).$ We consider the case where, near $\overline\alpha,\, u_0^\prime(\alpha)$ has non-vanishing second order derivative, so that, locally $u_0^\prime(\alpha)\sim M_0+C_1(\alpha-\overline\alpha)^2$ for $0\leq\lvert\alpha-\overline\alpha\rvert\leq s,$ $0<s\leq1$ and $C_1=u_0^{\prime\prime\prime}(\overline\alpha)/2<0.$ Then, for $\epsilon>0$ 
\begin{equation}
\label{eq:exp}
\begin{split}
\epsilon-u_0^\prime(\alpha)+M_0\sim\epsilon-C_1(\alpha-\overline\alpha)^2.
\end{split}
\end{equation}

\textsl{\textit{\textbf{Global existence for $\lambda\in(0,1].$}}}
\vspace{0.02in}

By (\ref{eq:exp}) above and the change of variables $\alpha=\sqrt{\frac{\epsilon}{\left|C_1\right|}}\tan\theta+\overline\alpha$, we have that 
\begin{equation}
\label{eq:comple1}
\begin{split}
\int_{\overline\alpha-s}^{\overline\alpha+s}{\frac{d\alpha}{(\epsilon-C_1(\alpha-\overline\alpha)^2)^{\frac{1}{\lambda}}}}\sim\frac{\epsilon^{\frac{1}{2}-\frac{1}{\lambda}}}{\sqrt{-C_1}}\int_{-\frac{\pi}{2}}^{\frac{\pi}{2}}{(\text{cos}(\theta))^{2\left(\frac{1}{\lambda}-1\right)}d\theta}
\end{split}
\end{equation}
for $\epsilon>0$ small and $\lambda\in(0,1]$. But from properties of the Gamma function (see for instance \cite{Gamelin1}), the identity 
\begin{equation}
\label{eq:gamma1}
\begin{split}
\int_0^1{t^{p-1}(1-t)^{q-1}dt}=\frac{\Gamma(p)\,\Gamma(q)}{\Gamma(p+q)}
\end{split}
\end{equation}
holds for all $p,q>0$. Therefore, setting $p=\frac{1}{2}$, $q=\frac{1}{\lambda}-\frac{1}{2}$ and $t=\sin^2\theta$ into (\ref{eq:gamma1}) gives
$$\int_{-\frac{\pi}{2}}^{\frac{\pi}{2}}{(\text{cos}(\theta))^{2\left(\frac{1}{\lambda}-1\right)}d\theta}=\frac{\sqrt{\pi}\,\Gamma\left(\frac{1}{\lambda}-\frac{1}{2}\right)}{\Gamma\left(\frac{1}{\lambda}\right)},$$
which we use, along with (\ref{eq:exp}) and (\ref{eq:comple1}), to obtain
\begin{equation}
\label{eq:comple2}
\begin{split}
\int_{0}^{1}{\frac{d\alpha}{(\epsilon-u_0^\prime(\alpha)+M_0)^{\frac{1}{\lambda}}}}\sim \frac{\Gamma\left(\frac{1}{\lambda}-\frac{1}{2}\right)}{\Gamma\left(\frac{1}{\lambda}\right)}\sqrt{-\frac{\pi}{C_1}}\,\epsilon^{\frac{1}{2}-\frac{1}{\lambda}}.
\end{split}
\end{equation}
Consequently, setting $\epsilon=\frac{1}{\lambda\eta}-M_0$ into (\ref{eq:comple2}) yields
\begin{equation}
\label{eq:comple3}
\begin{split}
\bar{\mathcal K}_0(t)\sim C_3\mathcal{J}(\overline\alpha,t)^{\frac{1}{2}-\frac{1}{\lambda}}
\end{split}
\end{equation}
for $\eta_*-\eta>0$ small, $\mathcal{J}(\overline\alpha,t)=1-\lambda\eta(t)M_0$, $\eta_*=\frac{1}{\lambda M_0}$ and positive constants $C_3$ given by
\begin{equation}
\label{eq:constgen2}
C_3=\frac{\Gamma\left(\frac{1}{\lambda}-\frac{1}{2}\right)}{\Gamma\left(\frac{1}{\lambda}\right)}\sqrt{-\frac{\pi M_0}{C_1}}.
\end{equation} 
Similarly, 
\begin{equation}
\label{eq:comple3.1}
\begin{split}
\int_{\overline\alpha-s}^{\overline\alpha+s}{\frac{d\alpha}{(\epsilon-C_1(\alpha-\overline\alpha)^2)^{1+\frac{1}{\lambda}}}}\sim\frac{\Gamma\left(\frac{1}{2}+\frac{1}{\lambda}\right)}{\Gamma\left(1+\frac{1}{\lambda}\right)}\sqrt{-\frac{\pi}{C_1}}\,\epsilon^{-\left(\frac{1}{2}+\frac{1}{\lambda}\right)}
\end{split}
\end{equation}
so that
\begin{equation}
\label{eq:alphaposgen7}
\begin{split}
\bar{\mathcal{K}}_1(t)\sim \frac{C_4}{\mathcal{J}(\overline\alpha,t)^{\frac{1}{2}+\frac{1}{\lambda}}}
\end{split}
\end{equation}
for $\lambda\in(0,1]$ and positive constants $C_4$ determined by
\begin{equation}
\label{eq:alphaposgen8}
\begin{split}
C_4=\frac{\Gamma\left(\frac{1}{2}+\frac{1}{\lambda}\right)}{\Gamma\left(1+\frac{1}{\lambda}\right)}\sqrt{-\frac{\pi M_0}{C_1}}.
\end{split}
\end{equation}
Using (\ref{eq:comple3}) and (\ref{eq:alphaposgen7}) with (\ref{eq:mainsolu}) implies
\begin{equation}
\label{eq:finalgen1}
\begin{split}
u_x(\gamma(\alpha,t),t)\sim\frac{C}{\mathcal{J}(\overline\alpha,t)^{\lambda-1}}\left(\frac{\mathcal{J}(\overline\alpha,t)}{\mathcal{J}(\alpha,t)}-\frac{C_4}{C_3}\right)
\end{split}
\end{equation}
for $\eta_*-\eta>0$ small. But $\Gamma(y+1)=y\,\Gamma(y)$, $y\in\mathbb{R}^+$ (see e.g. \cite{Gamelin1}), so that 
\begin{equation}
\label{eq:simplifygamma}
\begin{split}
\frac{C_4}{C_3}=\frac{\Gamma\left(\frac{1}{\lambda}\right)\,\Gamma\left(\frac{1}{\lambda}-\frac{1}{2}+1\right)}{\Gamma\left(\frac{1}{\lambda}+1\right)\,\Gamma\left(\frac{1}{\lambda}-\frac{1}{2}\right)}=1-\frac{\lambda}{2}\in[1/2,1)
\end{split}
\end{equation}
for $\lambda\in(0,1]$. Then, by (\ref{eq:finalgen1}), (\ref{eq:maxp})i) and the definition of $M_0$
\begin{equation}
\label{eq:globalest2}
\begin{split}
M(t)\to0^+,\,\,\,\,\,\,\,\,\,\,\,&\alpha=\overline\alpha,
\\
u_x(\gamma(\alpha,t),t)\to0^-,\,\,\,\,\,\,\,\,\,\,\,\,&\alpha\neq\overline\alpha
\end{split}
\end{equation}
as $\eta\uparrow\eta_*$ for all $\lambda\in(0,1)$. For the threshold parameter $\lambda_*=1,$ we keep track of the positive constant $C$ prior to (\ref{eq:finalgen1}) and find that, for $\alpha=\overline\alpha,$
\begin{equation}
\label{eq:oneside}
\begin{split}
M(t)\to-\frac{u_0^{\prime\prime\prime}(\overline\alpha)}{(2\pi)^2}>0
\end{split}
\end{equation}
as $\eta\uparrow\frac{1}{M_0},$ whereas 
\begin{equation}
\label{eq:otherside}
\begin{split}
u_x(\gamma(\alpha,t),t)\to\frac{u_0^{\prime\prime\prime}(\overline\alpha)}{(2\pi)^2}<0
\end{split}
\end{equation}
for $\alpha\neq\overline\alpha.$ Finally, from (\ref{eq:etaivp}) 
\begin{equation}
\label{eq:ass1}
\begin{split}
dt=\bar{\mathcal{K}}_0(t)^{2\lambda}d\eta,
\end{split}
\end{equation}
then (\ref{eq:comple3}) implies
\begin{equation}
\label{eq:ass2}
\begin{split}
t_*-t\sim C\int_\eta^{\eta_*}{(1-\lambda\mu M_0)^{\lambda-2}d\mu}.
\end{split}
\end{equation}
As a result, $t_*=+\infty$ for all $\lambda\in(0,1].$ See \S \ref{sec:examples} for examples.
\vspace{0.02in}

\textsl{\textit{\textbf{Two-sided, everywhere blow-up for $\lambda\in(1,+\infty).$}}}
\vspace{0.02in}

For $\lambda\in(1,+\infty)\backslash\{2\}$, set $b=\frac{1}{\lambda}$ in Lemma \ref{prop:prop} to obtain
\begin{equation}
\label{eq:gen2f12}
\begin{split}
\int_{\overline\alpha-s}^{\overline\alpha+s}{\frac{d\alpha}{(\epsilon-C_1(\alpha-\overline\alpha)^2)^{\frac{1}{\lambda}}}}=2s\epsilon^{-\frac{1}{\lambda}}\,{}_2F_1\left[\frac{1}{2},\frac{1}{\lambda};\frac{3}{2};\frac{s^2C_1}{\epsilon}\right]
\end{split} 
\end{equation}
where the above series is defined by (\ref{eq:2f1}) as long as $\epsilon\geq-C_1\geq-s^2C_1>0$, namely $-1\leq\frac{s^2C_1}{\epsilon}<0$. However, we are ultimately interested in the behaviour of (\ref{eq:gen2f12}) for $\epsilon>0$ arbitrarily small, so that, eventually $\frac{s^2C_1}{\epsilon}<-1$. To achieve this transition of the series argument across $-1$ in a well-defined, continuous fashion, we use proposition \ref{lem:analcont} which provides us with the analytic continuation of the series in (\ref{eq:gen2f12}) from argument values inside the unit circle, in particular for the interval 
%$[-1,1]$, i.e.
 $-1\leq\frac{s^2C_1}{\epsilon}<0$, to those found outside and thus for $\frac{s^2C_1}{\epsilon}<-1$. Consequently, for $\epsilon$ small enough, so that $-s^2C_1>\epsilon>0$, proposition \ref{lem:analcont} implies
\begin{equation}
\label{eq:gen2f122}
\begin{split}
2s\epsilon^{-\frac{1}{\lambda}}\,{}_2F_1\left[\frac{1}{2},\frac{1}{\lambda};\frac{3}{2};\frac{s^2C_1}{\epsilon}\right]= C\,\Gamma\left(\frac{1}{\lambda}-\frac{1}{2}\right)\epsilon^{\frac{1}{2}-\frac{1}{\lambda}}+\frac{C}{\lambda-2}+\psi(\epsilon)
\end{split}
\end{equation}
%Setting $b=\frac{1}{\lambda}$ into (\ref{eq:derseries}) and using lemma \ref{prop:prop} gives 
%\begin{equation}
%\label{eq:gen2f12}
%\begin{split}
%\int_{\overline\alpha-s}^{\overline\alpha+s}{\frac{d\alpha}{(\epsilon-C_1(\alpha-\overline\alpha)^2)^{\frac{1}{\lambda}}}}=2s\epsilon^{-\frac{1}{\lambda}}\,{}_2F_1\left[\frac{1}{2},\frac{1}{\lambda};\frac{3}{2};\frac{s^2C_1}{\epsilon}\right]
%\end{split}
%\end{equation}
%for $\epsilon\geq-C_1>0$ and $\lambda\in(1,+\infty)\backslash\{2\}.$ But, taking $\epsilon$ small enough, eventually $\frac{s^2C_1}{\epsilon}<-1.$ Then proposition \ref{lem:analcont} implies
%\begin{equation}
%\label{eq:gen2f122}
%\begin{split}
%2s\epsilon^{-\frac{1}{\lambda}}\,{}_2F_1\left[\frac{1}{2},\frac{1}{\lambda};\frac{3}{2};\frac{s^2C_1}{\epsilon}\right]= C\,\Gamma\left(\frac{1}{\lambda}-\frac{1}{2}\right)\epsilon^{\frac{1}{2}-\frac{1}{\lambda}}+\frac{C}{\lambda-2}+\psi(\epsilon)
%\end{split}
%\end{equation}
for $\psi(\epsilon)=\textsl{o}(1)$ as $\epsilon\to0$ and positive constant $C$ which may depend on $\lambda$ and can be obtained explicitly from (\ref{eq:analform}). Then, substituting $\epsilon=\frac{1}{\lambda\eta}-M_0$ into (\ref{eq:gen2f122}) and using (\ref{eq:exp}) along with (\ref{eq:gen2f12}), yields
\begin{equation}
\label{eq:alphaposgen6}
\begin{split}
\bar{\mathcal{K}}_0(t)\sim
\begin{cases}
C_3\mathcal{J}(\overline\alpha,t)^{\frac{1}{2}-\frac{1}{\lambda}},\,\,\,&\lambda\in(1,2),
\\
C,\,\,&\lambda\in(2,+\infty)
\end{cases}
\end{split}
\end{equation}
for $\eta_*-\eta>0$ small and positive constants $C_3$ given by (\ref{eq:constgen2}) for $\lambda\in(1,2)$. Similarly, by following an identical argument, with $b=1+\frac{1}{\lambda}$ instead, we find that estimate (\ref{eq:alphaposgen7}), derived initially for $\lambda\in(0,1]$, holds for $\lambda\in(1,+\infty)$ as well. First suppose $\lambda\in(1,2)$, then (\ref{eq:mainsolu}), (\ref{eq:alphaposgen7}) and (\ref{eq:alphaposgen6})i) imply estimate (\ref{eq:finalgen1}). However, by (\ref{eq:simplifygamma}) we now have 
$$\frac{C_4}{C_3}=1-\frac{\lambda}{2}\in(0,1/2)$$ for $\lambda\in(1,2)$.
%\begin{equation}
%\label{eq:finalgen1}
%\begin{split}
%u_x(\gamma(\alpha,t),t)\sim \frac{C}{\mathcal{J}(\overline\alpha,t)^{\lambda-1}}\left(\frac{\mathcal{J}(\overline\alpha,t)}{\mathcal{J}(\alpha,t)}-C\right).
%\end{split}
%\end{equation}
As a result, setting $\alpha=\overline\alpha$ in (\ref{eq:finalgen1}), we obtain
\begin{equation}
\label{eq:finalgen2}
\begin{split}
M(t)\sim\frac{C}{\mathcal{J}(\overline\alpha,t)^{\lambda-1}}\to+\infty
\end{split}
\end{equation}
as $\eta\uparrow\eta_*$. On the other hand, if $\alpha\neq\overline\alpha,$ the definition of $M_0$ gives 
\begin{equation}
\label{eq:finalgen3}
\begin{split}
u_x(\gamma(\alpha,t),t)\sim-\frac{C}{\mathcal{J}(\overline\alpha,t)^{\lambda-1}}\to-\infty.
\end{split}
\end{equation}
The existence of a finite $t_*>0$ follows from (\ref{eq:ass1}) and (\ref{eq:alphaposgen6})i), which imply
\begin{equation}
\label{eq:ass3}
\begin{split}
t_*-t\sim C(\eta_*-\eta)^{\lambda-1}.
\end{split}
\end{equation}
For $\lambda\in(2,+\infty),$ we use (\ref{eq:mainsolu}), (\ref{eq:alphaposgen7}) and (\ref{eq:alphaposgen6})ii) to get
\begin{equation}
\label{eq:finalgenother}
\begin{split}
u_x(\gamma(\alpha,t),t)\sim \frac{C}{\mathcal{J}(\overline\alpha,t)}\left(\frac{\mathcal{J}(\overline\alpha,t)}{\mathcal{J}(\alpha,t)}-C\mathcal{J}(\overline\alpha,t)^{\frac{1}{2}-\frac{1}{\lambda}}\right).
\end{split}
\end{equation}
Then, setting $\alpha=\overline\alpha$ in (\ref{eq:finalgenother}), we obtain
\begin{equation}
\label{eq:finalgen4}
\begin{split}
M(t)\sim\frac{C}{\mathcal{J}(\overline\alpha,t)}\to+\infty
\end{split}
\end{equation}
as $\eta\uparrow\eta_*.$ Similarly, for $\alpha\neq\overline\alpha,$ 
\begin{equation}
\label{eq:finalgen5}
\begin{split}
u_x(\gamma(\alpha,t),t)\sim-\frac{C}{\mathcal{J}(\overline\alpha,t)^{\frac{1}{2}+\frac{1}{\lambda}}}\to-\infty.
\end{split}
\end{equation}
A finite blow-up time $t_*>0$ follows from (\ref{eq:ass1}) and (\ref{eq:alphaposgen6})ii), which yield $$t_*-t\sim C(\eta_*-\eta).$$ For the case $\lambda=2$ and $\eta_*-\eta=\frac{1}{2M_0}-\eta>0$ small, we have 
\begin{equation}
\label{eq:genalpha2}
\begin{split}
\bar{\mathcal{K}}_0(t)\sim-C\ln\left(\mathcal{J}(\overline\alpha,t)\right),\,\,\,\,\,\,\,\,\,\,\,\,\,\,\,\,\,\,\,\,\,\bar{\mathcal{K}}_1(t)\sim\frac{C}{\mathcal{J}(\overline\alpha,t)}.
\end{split}
\end{equation}
Two-sided blow-up for $\lambda=2$ then follows from (\ref{eq:mainsolu}), (\ref{eq:ass1}) and (\ref{eq:genalpha2}). Finally, the behaviour of the jacobian in (\ref{eq:sumjacblowposlamb}) is deduced from (\ref{eq:sum}) and the estimates (\ref{eq:comple3}), (\ref{eq:alphaposgen6}) and (\ref{eq:genalpha2}). See \S \ref{subsec:smooth} for examples.\qquad\end{proof}
\begin{remark}
\label{rem:nonsmooth}
Several methods were used in \cite{Childress} to show that there are blow-up solutions for $\lambda=1$ under Dirichlet boundary conditions. We remark that these do not conflict with our global  result in part \ref{it:case} of Theorem \ref{thm:theoremtwogen} as long as the data is smooth and, under certain circumstances, its local behaviour near the endpoints $\alpha=\{0,1\}$ allows for a smooth, periodic extension of $u_0'$ to all $\alpha\in\mathbb{R}.$ Further details on this will be given in future work. See also \S \ref{subsubsec:pl} where a particular choice of $u_0''(\alpha)\in PC_{\mathbb{R}}(0,1)$ leads to finite-time blow-up for all $\lambda\in(1/2,+\infty).$
\end{remark}

\subsubsection{\textbf{Blow-up for $\lambda\in(-\infty,-1)$}}\hfill
\label{subsubsec:twocases}

Theorem \ref{thm:theoremfour} below shows the existence of mean-zero, smooth data for which solutions undergo a two-sided, everywhere blow-up in finite-time for $\lambda\in(-\infty,-2]$, whereas, if $\lambda\in(-2,-1),$ only the minimum diverges.
\begin{theorem}
\label{thm:theoremfour}
Consider the initial boundary value problem (\ref{eq:nonhomo})-(\ref{eq:pbc}). There exist smooth, mean-zero initial data such that:
\begin{enumerate}
\item\label{item:twothm4} For $\lambda\in(-\infty,-2],$ there is a finite $t_*>0$ such that both the maximum $M(t)$ and the minimum $m(t)$ diverge to $+\infty$ and respectively to $-\infty$ as $t\uparrow t_*.$ In addition, $\lim_{t\uparrow t_*}u_x(\gamma(\alpha,t),t)=+\infty$ for $\alpha\notin\{\overline\alpha_i, \underline\alpha_j\}$ (two-sided, everywhere blow-up). 
\item\label{item:onethm4} For $\lambda\in(-2,-1),$ there exists a finite $t_*>0$ such that only the minimum diverges, $m(t)\to-\infty,$ as $t\uparrow t_*$ (one-sided, discrete blow-up). 
\end{enumerate}
Finally, for $\lambda\in(-\infty,-1)$ and $t_*$ as above, the jacobian (\ref{eq:sum}) satisfies
\begin{equation}
\label{eq:sumjacblowalpha-infty-1}
\lim_{t\uparrow t_*}\gamma_\alpha(\alpha,t)=
\begin{cases}
0,\,\,\,\,\,\,\,\,\,\,\,\,\,&\alpha=\underline\alpha_j,
\\
C,\,\,\,\,\,\,\,\,\,\,\,\,\,&\alpha\neq\underline\alpha_j
\end{cases}
\end{equation}
where the positive constants $C$ depend on the choice of $\lambda$ and $\alpha\neq\underline\alpha_j.$
\end{theorem}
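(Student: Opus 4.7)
The plan is to mirror the strategy used in Theorem \ref{thm:theoremtwogen} for $\lambda\in(1,+\infty)$, but now centered at the global minimum of $u_0^\prime$ rather than the maximum. For simplicity I take initial data with $m_0<0$ attained at a single $\underline\alpha\in(0,1)$, non-degenerate in the sense that $u_0^\prime(\alpha)-m_0\sim C_2(\alpha-\underline\alpha)^2$ locally, with $C_2=u_0^{\prime\prime\prime}(\underline\alpha)/2>0$. Setting $\epsilon:=\mathcal{J}(\underline\alpha,t)=1-\lambda\eta m_0$, which vanishes as $\eta\uparrow\eta_*=1/(\lambda m_0)$, one has $\mathcal{J}(\alpha,t)\sim\epsilon+(-\lambda\eta C_2)(\alpha-\underline\alpha)^2$ with $-\lambda\eta C_2>0$, so the relevant integrals take the same local form as in \S \ref{subsubsec:alpha>1/2} and the same machinery applies.

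The first step is to estimate $\bar{\mathcal{K}}_0(t)$ and $\bar{\mathcal{K}}_1(t)$ as $\eta\uparrow\eta_*$. Because $-1/\lambda>0$, the integrand of $\bar{\mathcal{K}}_0$ is bounded and continuous on $[0,1]$ (and vanishes at $\underline\alpha$), so $\bar{\mathcal{K}}_0(t)\to K_\infty$, a finite strictly positive constant. For $\bar{\mathcal{K}}_1$ I apply Lemma \ref{prop:prop} with $b=1+1/\lambda$ to the localized piece of the integral near $\underline\alpha$ and then invoke Proposition \ref{lem:analcont} to continue the resulting Gauss series past the unit circle (since $s^2(-\lambda\eta C_2)/\epsilon\gg 1$). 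This yields $\bar{\mathcal{K}}_1(t)\sim A(\lambda)+B(\lambda)\,\epsilon^{-1/2-1/\lambda}$ with $A,B>0$ depending on $\lambda,m_0,C_2$. The singular exponent $-1/2-1/\lambda$ is strictly positive for $\lambda\in(-2,-1)$, so $\bar{\mathcal{K}}_1\to$ finite; strictly negative for $\lambda<-2$, so $\bar{\mathcal{K}}_1\to+\infty$; and at $\lambda=-2$ a logarithmic divergence arises, analogous to (\ref{eq:genalpha2}).

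Plugging these into (\ref{eq:mainsolu}) and using $\lambda\eta\to1/m_0<0$, $\bar{\mathcal{K}}_0^{2\lambda}\to K_\infty^{2\lambda}>0$ splits the analysis by $\alpha$. At $\alpha=\underline\alpha$ the pointwise term $1/\mathcal{J}(\underline\alpha,t)=1/\epsilon$ is strictly more singular than $\bar{\mathcal{K}}_1/\bar{\mathcal{K}}_0$ for every $\lambda<0$ (because $-1<-1/2-1/\lambda$ there, and the borderline $\lambda=-2$ contributes only a logarithm), so $m(t)\sim m_0/(K_\infty^{2\lambda}\epsilon)\to-\infty$ for every $\lambda<-1$. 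At $\alpha\neq\underline\alpha$, $1/\mathcal{J}(\alpha,t)$ stays bounded by continuity, so the behaviour of $u_x(\gamma(\alpha,t),t)$ is decided by $\bar{\mathcal{K}}_1/\bar{\mathcal{K}}_0$: when $\lambda\in(-2,-1)$ this ratio is bounded and $u_x(\gamma(\alpha,t),t)$ remains bounded, giving the one-sided, discrete blow-up of part (\ref{item:onethm4}); when $\lambda\leq-2$ the ratio diverges and, together with $-1/(\lambda\eta)>0$, forces $u_x(\gamma(\alpha,t),t)\to+\infty$, establishing the two-sided, everywhere blow-up of part (\ref{item:twothm4}). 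Finiteness of $t_*$ follows from (\ref{eq:ass1}) since $\bar{\mathcal{K}}_0$ is bounded away from $0$ and $\infty$ and $\eta_*<\infty$. The jacobian limits (\ref{eq:sumjacblowalpha-infty-1}) are read off $\gamma_\alpha=\mathcal{J}^{-1/\lambda}/\bar{\mathcal{K}}_0$: at $\alpha=\underline\alpha$ the numerator $\epsilon^{-1/\lambda}\to 0$ (since $-1/\lambda>0$), while at $\alpha\neq\underline\alpha$ both $\mathcal{J}(\alpha,t)$ and $\bar{\mathcal{K}}_0$ tend to positive constants.

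The main technical obstacle is the asymptotic expansion of $\bar{\mathcal{K}}_1$: tracking the gamma-function constants produced by Proposition \ref{lem:analcont} with enough precision to pinpoint the sign change of the singular exponent at $\lambda=-2$, and checking that the contribution from the bulk of $[0,1]$ away from $\underline\alpha$ enters only the bounded term $A(\lambda)$. The borderline $\lambda=-2$ must be treated separately using the logarithmic companion of the expansion, but follows the same template as the $\lambda=2$ case of Theorem \ref{thm:theoremtwogen}.
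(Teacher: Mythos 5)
Your proposal is correct and follows essentially the same route as the paper's proof: boundedness of $\bar{\mathcal{K}}_0$ away from $0$ and $\infty$, the localized quadratic expansion at $\underline\alpha$ combined with Lemma \ref{prop:prop} ($b=1+\tfrac{1}{\lambda}$) and Proposition \ref{lem:analcont} to get $\bar{\mathcal{K}}_1\sim A+B\,\mathcal{J}(\underline\alpha,t)^{-(\frac{1}{2}+\frac{1}{\lambda})}$ with the sign change of the exponent at $\lambda=-2$, substitution into (\ref{eq:mainsolu}), the separate logarithmic treatment of $\lambda=-2$, and finiteness of $t_*$ from (\ref{eq:ass1}). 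The only (immaterial) difference is that you argue positivity of $\lim\bar{\mathcal{K}}_0$ directly from continuity of the bounded integrand, whereas the paper runs a short contradiction argument to reach the same bounds (\ref{eq:newest}).
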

\begin{proof}
For $\lambda\in(-\infty,-1),$ smoothness of $u_0'$ implies that $\bar{\mathcal{K}}_0(t)=\int_0^1{\mathcal{J}(\alpha,t)^{\frac{1}{\left|\lambda\right|}}d\alpha}$ remains finite (and positive) for all $\eta\in[0,\eta_*)$,\, $\eta_*=\frac{1}{\lambda m_0}.$ Also, $\bar{\mathcal{K}}_0(t)$ has a finite, positive limit as $\eta\uparrow \eta_*.$ Indeed, suppose there is an earliest $t_1>0$ such that $\eta_1=\eta(t_1)>0$ and 
\begin{equation}
\label{eq:nun}
\begin{split}
\bar{\mathcal{K}}_0(t_1)=\int_0^1{(1-\lambda\eta_1u_0^\prime(\alpha))^{\frac{1}{\left|\lambda\right|}}d\alpha}=0.
\end{split}
\end{equation}
Since $\int_0^1{\left(1-u_0^\prime/m_0\right)^{\frac{1}{\left|\lambda\right|}}d\alpha}>0$, then $\eta_1\neq\eta_*.$ Also, by periodicity of $u_0$, there are $[0,1]\ni\alpha_1\neq\underline\alpha_j$ where $(1-\lambda\eta_1u_0^\prime(\alpha_1))^{\frac{1}{\left|\lambda\right|}}=1$, and so, (\ref{eq:nun}) implies the existence of at least one $\alpha^\prime\neq\underline\alpha_j$ where $u_0^\prime(\alpha^\prime)=\frac{1}{\lambda\eta_1}$. But $u_0^\prime\geq m_0$ and $\eta_*=\frac{1}{\lambda m_0},$ then
\begin{equation}
\label{eq:back3}
\begin{split}
\eta_*<\eta_1.
\end{split}
\end{equation}
In addition, (\ref{eq:back3}) and $m_0\leq u_0^\prime\leq M_0$ yield 
\begin{equation}
\label{eq:newest}
\begin{split}
0<\int_0^1{\frac{d\alpha}{(1-u_0^\prime/m_0)^{\frac{1}{\lambda}}}}\leq\bar{\mathcal{K}}_0(t)\leq1,\,\,\,\,\,\,\,\,\,\,0\leq\eta\leq\eta_*.
\end{split}
\end{equation} 
Next, for $\lambda\in(-\infty,-1),$ we estimate $\bar{\mathcal{K}}_1(t)=\int_0^1{\mathcal{J}(\alpha,t)^{\frac{1}{\left|\lambda\right|}-1}d\alpha}$ as $\eta\uparrow\eta_*$ by following an argument similar to that of Theorem \ref{thm:theoremtwogen}. For simplicity, suppose $m_0$ occurs at a single $\underline\alpha\in(0,1).$ We consider the case where, near $\underline\alpha,\, u_0^\prime(\alpha)$ has non-vanishing second order derivative, so that, locally $u_0^\prime(\alpha)\sim m_0+C_2(\alpha-\underline\alpha)^2$ for $0\leq\lvert \alpha-\underline\alpha\rvert\leq r,$ $0<r\leq1$ and $C_2=u_0'''(\underline\alpha)/2>0.$ Then, for $\epsilon>0$
\begin{equation}
\label{eq:othere}
\begin{split}
\epsilon+u_0^\prime(\alpha)-m_0\sim\epsilon+C_2(\alpha-\underline\alpha)^2.
\end{split}
\end{equation} 
Given $\lambda\in(-\infty,-1),$ set $b=1+\frac{1}{\lambda}$ in Lemma (\ref{prop:prop}) to find
\begin{equation}
\label{eq:toit}
\begin{split}
\int_{\underline\alpha-r}^{\underline\alpha+r}{\frac{d\alpha}{(\epsilon+C_2(\alpha-\underline\alpha)^2)^{1+\frac{1}{\lambda}}}}=\frac{2r}{\epsilon^{1+\frac{1}{\lambda}}}\,{}_2F_1\left[\frac{1}{2},1+\frac{1}{\lambda};\frac{3}{2};-\frac{r^2C_2}{\epsilon}\right]
\end{split}
\end{equation}
for $\epsilon\geq C_2\geq r^2C_2$, i.e. $-1\leq-\frac{r^2C_2}{\epsilon}<0$, and $\lambda\in(-\infty,-1)\backslash\{-2\}$\footnote[4]{The case $\lambda=-2$ is treated separately.}. Then, as we let $\epsilon>0$ become small enough, so that eventually $-\frac{r^2C_2}{\epsilon}<-1$, Proposition \ref{lem:analcont} implies
\begin{equation}
\label{eq:lastser}
\begin{split}
\frac{2r}{\epsilon^{1+\frac{1}{\lambda}}}\,{}_2F_1\left[\frac{1}{2},1+\frac{1}{\lambda};\frac{3}{2};-\frac{r^2C_2}{\epsilon}\right]=\frac{C}{\lambda+2}+\frac{C\,\Gamma\left(\frac{1}{2}+\frac{1}{\lambda}\right)}{\epsilon^{\frac{1}{2}+\frac{1}{\lambda}}}+\xi(\epsilon)
\end{split}
\end{equation}
for $\xi(\epsilon)=\textsl{o}(1)$ as $\epsilon\to0$ and positive constants $C$ which may depend on the choice of $\lambda$ and can be obtained explicitly from (\ref{eq:analform}). Using (\ref{eq:lastser}) on (\ref{eq:toit}), along with (\ref{eq:othere}) and the substitution $\epsilon=m_0-\frac{1}{\lambda\eta},$ yields
\begin{equation}
\label{eq:lastfin}
\bar{\mathcal{K}}_1(t)\sim
\begin{cases}
C,\,\,\,\,\,\,\,\,\,&\lambda\in(-2,-1),
\\
C_5\mathcal{J}(\underline\alpha,t)^{-\left(\frac{1}{2}+\frac{1}{\lambda}\right)},\,\,\,\,\,\,\,\,\,\,\,&\lambda\in(-\infty,-2)
\end{cases}
\end{equation}
for $\eta_*-\eta>0$ small, $\eta_*=\frac{1}{\lambda m_0},\,\, \mathcal{J}(\underline\alpha,t)=1-\lambda\eta(t)m_0$ and
\begin{equation}
\label{eq:lastconst}
\begin{split}
C_5=\frac{\Gamma\left(\frac{1}{2}+\frac{1}{\lambda}\right)}{\Gamma\left(1+\frac{1}{\lambda}\right)}\sqrt{-\frac{\pi m_0}{C_2}}>0,\,\,\,\,\,\,\,\,\,\,\,\,\,\lambda\in(-\infty,-2).
\end{split}
\end{equation}
Setting $\alpha=\underline\alpha$ in (\ref{eq:mainsolu}) and using (\ref{eq:maxp})ii), (\ref{eq:newest}) and (\ref{eq:lastfin}), implies
\begin{equation}
\label{eq:lastmin}
\begin{split}
m(t)\sim-\frac{C}{\mathcal{J}(\underline\alpha,t)}\to-\infty
\end{split}
\end{equation}
as $\eta\uparrow\eta_*$ for all $\lambda\in(-\infty,-1)\backslash\{-2\}.$ On the other hand, using (\ref{eq:mainsolu}), (\ref{eq:newest}), (\ref{eq:lastfin}) and the definition of $m_0,$ we see that, for $\alpha\neq\underline\alpha,$ 
\begin{equation}
\label{eq:lastother1}
\begin{cases}
\left|u_x(\gamma(\alpha,t),t)\right|<+\infty,\,\,\,\,\,\,\,&\lambda\in(-2,-1),
\\
u_x(\gamma(\alpha,t),t)\sim C\mathcal{J}(\underline\alpha,t)^{-\left(\frac{1}{2}+\frac{1}{\lambda}\right)}\to+\infty,\,\,\,\,\,\,\,\,\,\,&\lambda\in(-\infty,-2)
\end{cases}
\end{equation}
as $\eta\uparrow\eta_*.$ A one-sided, discrete blow-up for $\lambda\in(-2,-1)$ follows from (\ref{eq:lastmin}) and (\ref{eq:lastother1})i), whereas a two-sided, everywhere blow-up for $\lambda\in(-\infty,-2)$ results from (\ref{eq:lastmin}) and (\ref{eq:lastother1})ii). The existence of a finite $t_*>0$ follows from (\ref{eq:etaivp}) and (\ref{eq:newest}) as $\eta\uparrow \eta_*.$ Particularly, we have the lower bound
\begin{equation}
\label{eq:lowt}
\begin{split}
\eta_*\leq t_*<+\infty.
\end{split}
\end{equation} 
The case $\lambda=-2$ can be treated directly. We find
\begin{equation}
\label{eq:alpha-2}
\begin{split}
\bar{\mathcal{K}}_1(t)\sim-C\ln\left(\mathcal{J}(\underline\alpha,t)\right)
\end{split}
\end{equation}
for $\eta_*-\eta>0$ small. A two-sided blow-up then follows as above. Finally, (\ref{eq:sumjacblowalpha-infty-1}) is deduced from (\ref{eq:sum}) and (\ref{eq:newest}). See \S \ref{subsec:smooth} for examples.\qquad\end{proof}

\subsubsection{\textbf{One-sided, discrete blow-up for $\lambda\in[-1,0)$}}\hfill
\label{subsubsec:easyalpha}

Since (\ref{eq:jacid}) and $\gamma_{\alpha}(\alpha,0)=1$ imply the existence of a time interval $[0,t_*)$ where
\begin{equation}
\label{eq:alt}
\begin{split}
\gamma_\alpha(\alpha,t)=\exp\left(\int_0^t{u_x(\gamma(\alpha,s),s)\,ds}\right)>0
\end{split}
\end{equation}
for $\alpha\in[0,1]$ and $0<t_*\leq+\infty$, (\ref{eq:jacid2}) implies that for $\lambda\in[-1,0)$, $(\gamma_{\alpha}^{-\lambda}(\alpha,t))^{\ddot{}}\leq0.$ But $(\gamma_\alpha^{-\lambda})^{\cdot}|_{t=0}=-\lambda u_0^\prime,$ thus integrating twice in time gives $$\gamma_{\alpha}(\alpha,t)^{-\lambda}\leq 1-\lambda t\,u_0^\prime(\alpha).$$ Provided there is $\underline\alpha\in[0,1]$ such that $\inf_{\alpha\in[0,1]}{u_0^{\prime}(\alpha)}=u_0^{\prime}(\underline\alpha)<0,$ we define $t_*=(\lambda u_0^\prime(\underline\alpha))^{-1},$ then $\gamma_{\alpha}(t,\underline\alpha)\downarrow0$ as $t\uparrow t_*$ for any $\lambda\in[-1,0).$ This along with (\ref{eq:maxp})ii) and (\ref{eq:alt}) implies 
\begin{equation}
\label{eq:alt5}
\begin{split}
\lim_{t\uparrow t_*}{\int_0^t{u_x(\gamma(\underline\alpha,s),s)\,ds}}=\lim_{t\uparrow t_*}{\int_0^t{m(s)ds}}=-\infty.
\end{split}
\end{equation}
More precise blow-up properties are now studied via formula (\ref{eq:mainsolu}). Theorem \ref{thm:theoremthree} below will extend the one-sided, discrete blow-up found in Theorem \ref{thm:theoremfour} for parameters $\lambda\in(-2,-1)$ to all $\lambda\in(-2,0).$  
\begin{theorem}
\label{thm:theoremthree}
Consider the initial boundary value problem (\ref{eq:nonhomo})-(\ref{eq:pbc}) with arbitrary smooth, mean-zero initial data. For every $\lambda\in[-1,0),$ there exists a finite $t_*>0$ such that only the minimum diverges, $m(t)\to-\infty,$ as $t\uparrow t_*$ (one-sided, discrete blow-up). Also, the jacobian (\ref{eq:sum}) satisfies
\begin{equation}
\label{eq:sumjacblowalpha-10}
\lim_{t\uparrow t_*}\gamma_\alpha(\alpha,t)=
\begin{cases}
0,\,\,\,\,\,\,\,\,\,\,\,\,\,&\alpha=\underline\alpha_j,
\\
C,\,\,\,\,\,\,\,\,\,\,\,\,\,&\alpha\neq\underline\alpha_j
\end{cases}
\end{equation}
where the positive constants $C$ depend on the choice of $\lambda$ and $\alpha\neq\underline\alpha_j.$
\end{theorem}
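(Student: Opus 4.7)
\textbf{Proof plan for Theorem \ref{thm:theoremthree}.}
The plan is to exploit the representation formula (\ref{eq:mainsolu}) along with the time change (\ref{eq:etaivp}), exactly as in the proof of Theorem \ref{thm:theoremfour}, but observing that for $\lambda\in[-1,0)$ the exponents appearing in $\bar{\mathcal{K}}_0$ and $\bar{\mathcal{K}}_1$ are \emph{non-negative}, so no delicate hypergeometric expansion is required. Set $\eta_\ast=1/(\lambda m_0)$ and let $\underline\alpha\in[0,1]$ be a point (assumed unique for simplicity) where $u_0'$ attains its minimum $m_0<0$. Since $u_0'$ is smooth and mean zero, $m_0<0<M_0$.

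First I would establish that $\bar{\mathcal{K}}_0(t)$ remains bounded above and strictly bounded below by a positive constant as $\eta\uparrow\eta_\ast$. Writing $\bar{\mathcal{K}}_0(t)=\int_0^1 \mathcal{J}(\alpha,t)^{1/|\lambda|}\,d\alpha$, the exponent $1/|\lambda|\ge 1$ makes the integrand a continuous, bounded function of $\alpha$ for every $\eta\in[0,\eta_\ast]$; pointwise monotone convergence (or dominated convergence) yields
\begin{equation*}
\lim_{\eta\uparrow\eta_\ast}\bar{\mathcal{K}}_0(t)=\int_0^1\left(1-u_0'(\alpha)/m_0\right)^{1/|\lambda|}d\alpha\in(0,+\infty),
\end{equation*}
the lower positivity following from the fact that $1-u_0'/m_0$ vanishes only at the discrete set $\{\underline\alpha_j\}$ (a strict lower bound can also be obtained by a short contradiction argument mirroring (\ref{eq:nun})--(\ref{eq:newest})). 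An analogous argument applied to $\bar{\mathcal{K}}_1(t)=\int_0^1\mathcal{J}(\alpha,t)^{-1-1/\lambda}d\alpha$ shows it too stays bounded, since $-1-1/\lambda=-1+1/|\lambda|\ge 0$ on $\lambda\in[-1,0)$. Finiteness of $t_\ast$ then follows from (\ref{eq:etaivp}): $t_\ast=\int_0^{\eta_\ast}\bar{\mathcal{K}}_0(\mu)^{2\lambda}d\mu<+\infty$.

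Next I would plug these estimates into (\ref{eq:mainsolu}). At $\alpha=\underline\alpha_j$ the factor $1/\mathcal{J}(\underline\alpha_j,t)\to+\infty$ dominates the bounded quantity $\bar{\mathcal{K}}_1/\bar{\mathcal{K}}_0$; since the prefactor $1/(\lambda\eta\bar{\mathcal{K}}_0^{2\lambda})$ tends to a \emph{negative} finite value (as $\lambda<0$, $\eta\to\eta_\ast>0$, $\bar{\mathcal{K}}_0^{2\lambda}>0$), we obtain $m(t)=u_x(\gamma(\underline\alpha_j,t),t)\to-\infty$ as $t\uparrow t_\ast$. For $\alpha\neq\underline\alpha_j$, by definition of $m_0$ we have $\mathcal{J}(\alpha,\eta_\ast)=1-u_0'(\alpha)/m_0>0$, so $1/\mathcal{J}(\alpha,t)$ stays bounded and hence $u_x(\gamma(\alpha,t),t)$ remains finite, giving one-sided, discrete blow-up.

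Finally, the jacobian claim (\ref{eq:sumjacblowalpha-10}) is immediate from $\gamma_\alpha=\mathcal{J}^{1/|\lambda|}/\bar{\mathcal{K}}_0$: the numerator vanishes exactly at $\alpha=\underline\alpha_j$ and is otherwise bounded away from zero, while the denominator tends to a finite positive limit. The main (mild) obstacle is verifying the strict positive lower bound on $\bar{\mathcal{K}}_0$ up to and including $\eta=\eta_\ast$; if direct estimation feels unsatisfactory, the contradiction argument used to obtain (\ref{eq:newest}) in Theorem \ref{thm:theoremfour} applies verbatim, since it only uses periodicity and continuity of $u_0'$ together with the definition of $\eta_\ast$, and in particular does not require $\lambda<-1$.
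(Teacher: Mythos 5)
Your proposal is correct and follows essentially the same route as the paper: bound $\bar{\mathcal{K}}_0$ and $\bar{\mathcal{K}}_1$ above and below by positive constants (possible precisely because the exponents $1/|\lambda|$ and $-1-1/\lambda$ are non-negative for $\lambda\in[-1,0)$), then read off the one-sided blow-up, the finiteness of $t_*$, and the jacobian limits directly from (\ref{eq:mainsolu}), (\ref{eq:etaivp}) and (\ref{eq:sum}). The only difference is cosmetic: where you get the positive lower bound on $\bar{\mathcal{K}}_0$ by continuity/dominated convergence on the compact interval $[0,\eta_*]$, the paper shows $\ddot{\bar{\mathcal{K}}}_0>0$ and $\dot{\bar{\mathcal{K}}}_0(0)=0$ to obtain the sharper bound $1\le\bar{\mathcal{K}}_0\le(1-M_0/m_0)^{1/|\lambda|}$, which yields the explicit estimate (\ref{eq:esttime2}) on $t_*$ but is not needed for the statement itself.
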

\begin{proof}
Since $u_0'$ is smooth and $\lambda\in[-1,0),$ both integrals $\bar{\mathcal{K}}_i(t),$ $i=0,1$ remain finite (and positive) for all $\eta\in[0,\eta_*),\, \eta_*=\frac{1}{\lambda m_0}.$ Also, $\bar{\mathcal{K}}_0(t)$ does not vanish as $\eta\uparrow\eta_*.$ In fact
\begin{equation}
\label{eq:quick3}
\begin{split}
1\leq\bar{\mathcal{K}}_0(t)\leq\left(1-\frac{M_0}{m_0}\right)^{\frac{1}{\left|\lambda\right|}}
\end{split}
\end{equation}
for all $\eta\in[0,\eta_*].$ Indeed, notice that $\dot{\bar{\mathcal{K}}}_0(0)=0$ and
\begin{equation*}
\begin{split}
\ddot{\bar{\mathcal{K}}}_0(t)=\left((1+\lambda)\int_0^1{\frac{u_0^\prime(\alpha)^2d\alpha}{\mathcal{J}(\alpha,t)^{2+\frac{1}{\lambda}}}}-2\lambda\left(\int_0^1{\frac{u_0^\prime(\alpha)\,d\alpha}{\mathcal{J}(\alpha,t)^{1+\frac{1}{\lambda}}}}\right)^2\right)\bar{\mathcal{K}}_0(t)^{-4\lambda}>0
\end{split}
\end{equation*}
for $\lambda\in[-1,0)$ and $\eta\in(0,\eta_*).$ This implies
\begin{equation}
\label{eq:almost}
\begin{split}
\dot{\bar{\mathcal{K}}}_0(t)=\bar{\mathcal{K}}_0(t)^{-2\lambda}\int_0^1{\frac{u_0^{\prime}(\alpha)d\alpha}{\mathcal{J}(\alpha,t)^{1+\frac{1}{\lambda}}}}>0.
\end{split}
\end{equation}
Then, using (\ref{eq:almost}), $\bar{\mathcal{K}}_0(0)=1$ and $m_0\leq u_0^\prime(\alpha)\leq M_0$ yield (\ref{eq:quick3}). Similarly, one can show that 
\begin{equation}
\label{eq:quick4}
\begin{split}
1\leq\bar{\mathcal{K}}_1(t)\leq\left(\frac{m_0}{m_0-M_0}\right)^{1+\frac{1}{\lambda}}.
\end{split}
\end{equation}
Consequently, (\ref{eq:maxp})ii), (\ref{eq:mainsolu}), (\ref{eq:quick3}) and (\ref{eq:quick4}) imply that $$m(t)=u_x(\gamma(\underline\alpha_j,t),t)\to-\infty$$ as $\eta\uparrow\eta_*.$ On the other hand, by (\ref{eq:quick3}), (\ref{eq:quick4}) and the definition of $m_0,$ we find that $u_x(\gamma(\alpha,t),t)$  remains bounded for all $\alpha\neq\underline\alpha_j$ as $\eta\uparrow\eta_*.$ The existence of a finite blow-up time $t_*>0$ is guaranteed by (\ref{eq:etaivp}) and (\ref{eq:quick3}). Although $t_*$ can be computed explicitly from (\ref{eq:assympttwo}), (\ref{eq:quick3}) provides the simple estimate\footnote[5]{Which we may contrast to (\ref{eq:lowt}). Notice that (\ref{eq:assympttwo}) implies that the two cases coincide ($t_*=\eta_*$) in the case of Burgers' equation $\lambda=-1.$} 
\begin{equation}
\label{eq:esttime2}
\begin{split}
\eta_*\left(1-\frac{M_0}{m_0}\right)^{-2}\leq t_*\leq\eta_*.
%\frac{m_0}{\lambda(m_0-M_0)^2}\leq t_*\leq\eta_*.
\end{split}
\end{equation}
Also, since the maximum $M(t)$ remains finite as $t\uparrow t_*,$ setting $\alpha=\overline\alpha_i$ in (\ref{eq:mainsolu}) and using (\ref{eq:maxp})i) and (\ref{eq:ref}) gives $\dot M(t)<\lambda(M(t))^2<0,$ which implies $$0<M(t)\leq M_0$$ for all $t\in[0,t_*]$ and $\lambda\in[-1,0).$ Finally, (\ref{eq:sumjacblowalpha-10}) follows directly from (\ref{eq:sum}), (\ref{eq:quick3}) and the definition of $m_0.$ See \S \ref{subsec:smooth} for examples.\qquad\end{proof}

\subsubsection{\textbf{Further $L^p$ Regularity}}\hfill
\label{subsubsec:lpregsmooth}

In this section, we prove Theorem \ref{thm:lpintro}. In particular, we will see how the two-sided, everywhere blow-up (or one-sided, discrete blow-up) found in theorems \ref{thm:theoremtwogen}, \ref{thm:theoremfour} and \ref{thm:theoremthree}, can be associated with stronger (or weaker) $L^p$ regularity. Before proving the Theorem, we derive basic upper and lower bounds for $\left\|u_x\right\|_{p},\, p\in[1,+\infty),$ as well as write down explicit formulas for the energy function $E(t)=\left\|u_x\right\|_2^2\,$ and derivative $ \dot E(t)$, and estimate the blow-up rates of relevant time-dependent integrals. From (\ref{eq:sum}) and (\ref{eq:mainsolu}),
\begin{equation}
\label{eq:lpreg1}
\left|u_x(\gamma(\alpha,t),t)\right|^p\gamma_\alpha(\alpha,t)=\frac{\left|f(\alpha,t)\right|^p}{\left|\lambda\eta(t)\right|^p\bar{\mathcal{K}}_0(t)^{^{1+2\lambda p}}}
\end{equation}
for $t\in[0,t_*),\, p\in[1,+\infty),\, \lambda\neq 0$ and $$f(\alpha,t)=\frac{1}{\mathcal{J}(\alpha,t)^{^{1+\frac{1}{\lambda p }}}}-\frac{\bar{\mathcal{K}}_1(t)}{\bar{\mathcal{K}}_0(t)\mathcal{J}(\alpha,t)^{\frac{1}{\lambda p}}}.$$ Integrating (\ref{eq:lpreg1}) in $\alpha$ and using periodicity then gives
\begin{equation}
\label{eq:lpreg2}
\begin{split}
\left\|u_x(x,t)\right\|_p^p=\frac{1}{\left|\lambda\eta(t)\right|^p\bar{\mathcal{K}}_0(t)^{^{1+2\lambda p}}}\int_0^1{\left|f(\alpha,t)\right|^pd\alpha}.
\end{split}
\end{equation}
In particular, setting $p=2$ yields the following formula for the energy $E(t):$
\begin{equation}
\label{eq:energy2}
\begin{split}
E(t)=\left(\lambda\eta(t)\bar{\mathcal{K}}_0(t)^{1+2\lambda}\right)^{-2}\left(\bar{\mathcal{K}}_0(t)\bar{\mathcal{K}}_2(t)-\bar{\mathcal{K}}_1(t)^2\right).
\end{split}
\end{equation}
Furthermore, multiplying (\ref{eq:nonhomo})i) by $u_x,$ integrating by parts and using (\ref{eq:pbc}), (\ref{eq:sum}) and (\ref{eq:mainsolu}) gives, after some simplification,
\begin{equation}
\label{eq:dotderenergy}
\begin{split}
\dot E(t)&=(1+2\lambda)\int_0^1{u_x(x,t)^3dx}
\\
&=(1+2\lambda)\int_0^1{(u_x(\gamma(\alpha,t),t))^3\gamma_\alpha(\alpha,t)\,d\alpha}
\\
&=\frac{1+2\lambda}{(\lambda\eta(t))^3}
\left[\frac{\bar{\mathcal{K}}_3(t)}{\bar{\mathcal{K}}_1(t)}-\frac{3\bar{\mathcal{K}}_2(t)}{\bar{\mathcal{K}}_0(t)}+2\left(\frac{\bar{\mathcal{K}}_1(t)}{\bar{\mathcal{K}}_0(t)}\right)^2\right]\frac{\bar{\mathcal{K}}_1(t)}{\bar{\mathcal{K}}_0(t)^{1+6\lambda}}.
\end{split}
\end{equation}
Now $\bar{\mathcal{K}}_i(t),\, \mathcal{J}(\alpha,t)>0$ for $\eta\in[0,\eta_*)$ (i.e. $t\in[0,t_*)$) and $\alpha\in[0,1].$ As a result 
\begin{equation*}
\begin{split}
\left|f(\alpha,t)\right|^p\leq2^{p-1}\left(\frac{1}{\mathcal{J}(\alpha,t)^{^{p+\frac{1}{\lambda }}}}+\frac{\bar{\mathcal{K}}_1(t)^{^p}}{\bar{\mathcal{K}}_0(t)^{^p}\mathcal{J}(\alpha,t)^{\frac{1}{\lambda}}}\right)
\end{split}
\end{equation*}
which  can be used together with (\ref{eq:lpreg1}) to obtain, upon integration, the upper bound
\begin{equation}
\label{eq:upper}
\begin{split}
\left\|u_x(x,t)\right\|_p^p\leq\frac{2^{p-1}}{\left|\lambda\eta(t)\right|^p\bar{\mathcal{K}}_0(t)^{^{1+2\lambda p}}}\left(\int_0^1{\frac{d\alpha}{\mathcal{J}(\alpha,t)^{^{p+\frac{1}{\lambda }}}}}+\frac{\bar{\mathcal{K}}_1(t)^{^p}}{\bar{\mathcal{K}}_0(t)^{^{p-1}}}\right)
\end{split}
\end{equation}
valid for $t\in[0,t_*),\, p\in[1,+\infty)$ and $\lambda\neq0.$ For a lower bound, notice that by Jensen's inequality,
\begin{equation*}
\begin{split}
\int_0^1{\left|f(\alpha,t)\right|^pd\alpha}\geq\left|\int_0^1{f(\alpha,t)d\alpha}\right|^p
\end{split}
\end{equation*}
for $p\in[1,+\infty).$ Using the above in (\ref{eq:lpreg2}), we find
\begin{equation}
\label{eq:lower}
\begin{split}
\left\|u_x(x,t)\right\|_p\geq\frac{1}{\left|\lambda\eta(t)\right|\bar{\mathcal{K}}_0(t)^{^{2\lambda+\frac{1}{p}}}}\left|\int_0^1{\frac{d\alpha}{\mathcal{J}(\alpha,t)^{^{1+\frac{1}{\lambda p }}}}}-\frac{\bar{\mathcal{K}}_1(t)}{\bar{\mathcal{K}}_0(t)}\int_0^1{\frac{d\alpha}{\mathcal{J}(\alpha,t)^{\frac{1}{\lambda p}}}}\right|.
\end{split}
\end{equation}
Although the right-hand side of (\ref{eq:lower}) is identically zero for $p=1,$ it does allow for the study of $L^p$ regularity of solutions when $p\in(1,+\infty)$\footnote[6]{Also, for $p\in(1,+\infty),$ (\ref{eq:lower}) makes sense as $t\downarrow0$ due to the periodicity of $u_0'.$}. Before proving Theorem \ref{thm:lpintro}, we need to determine any blow-up rates for the appropriate integrals in (\ref{eq:energy2})-(\ref{eq:lower}). By following the argument in theorems \ref{thm:theoremtwogen} and \ref{thm:theoremfour}, we go through the derivation of estimates for $\int_0^1{\mathcal{J}(\alpha,t)^{-\left(1+\frac{1}{\lambda p}\right)}d\alpha}$ with $\lambda\in(1,+\infty),\, p\in[1+\infty)$ and $\eta_*=\frac{1}{\lambda M_0},$ whereas those for $$\int_0^1{\frac{d\alpha}{\mathcal{J}(\alpha,t)^{\frac{1}{\lambda p}}}},\,\,\,\,\,\,\,\,\,\,\,\,\,\int_0^1{\frac{d\alpha}{\mathcal{J}(\alpha,t)^{p+\frac{1}{\lambda}}}}$$ follow similarly and will be simply stated here. For simplicity, assume $u_0'$ attains its maximum value $M_0>0$ at a single $\overline\alpha\in(0,1).$ As before, we consider the case where, near $\overline\alpha,\, u_0'$ has non-vanishing second-order derivative. Accordingly, there is $s \in (0, 1]$ such that $u_0'(\alpha)\sim M_0+C_1(\alpha-\overline\alpha)^2$ for $0\leq\left|\alpha-\overline\alpha\right|\leq s$ and $C_1=u_0'''(\overline\alpha)/2<0.$ Then $\epsilon-u_0'(\alpha)+M_0\sim\epsilon-C_1(\alpha-\overline\alpha)^2$
for $\epsilon>0.$ Given $\lambda>1$ and $p\geq1$, we let $b=1+\frac{1}{\lambda p}$ in Lemma \ref{prop:prop} to obtain
\begin{equation}
\label{eq:lastreal}
\begin{split}
\int_{\overline\alpha-s}^{\overline\alpha+s}{\frac{d\alpha}{(\epsilon-u_0'(\alpha)+M_0)^{b}}}\sim\int_{\overline\alpha-s}^{\overline\alpha+s}{\frac{d\alpha}{(\epsilon-C_1(\alpha-\overline\alpha)^2)^{b}}}=\frac{2s}{\epsilon^{b}}\,{}_2F_1\left[\frac{1}{2},b;\frac{3}{2};\frac{C_1s^2}{\epsilon}\right]
\end{split}
\end{equation}
for $\epsilon\geq -C_1\geq-s^2C_1>0.$ Now, if we let $\epsilon>0$ become small enough, so that eventually $\frac{C_1s^2}{\epsilon}<-1$, proposition \ref{lem:analcont} implies
\begin{equation*}
\begin{split}
\frac{2s}{\epsilon^b}{}_2F_1\left[\frac{1}{2},b;\frac{3}{2};\frac{C_1s^2}{\epsilon}\right]=\frac{2s}{(1-2b)(-s^2C_1)^b}+\frac{\Gamma\left(b-\frac{1}{2}\right)}{\Gamma(b)}\sqrt{-\frac{\pi}{C_1}}\,\epsilon^{\frac{1}{2}-b}+\zeta(\epsilon)
\end{split}
\end{equation*}
for $\lambda\neq2/p$, and $\zeta(\epsilon)=\textsl{o}(1)$ as $\epsilon\to0.$ Using the above on (\ref{eq:lastreal}) yields
\begin{equation}
\label{eq:again}
\begin{split}
\int_{0}^{1}{\frac{d\alpha}{(\epsilon-u_0'(\alpha)+M_0)^{b}}}\sim\frac{\Gamma(b-1/2)}{\Gamma(b)}\sqrt{-\frac{\pi}{C_1}}\,\epsilon^{\frac{1}{2}-b}
\end{split}
\end{equation}
for $\epsilon>0$ small. Then, setting $\epsilon=\frac{1}{\lambda\eta}-M_0$ into (\ref{eq:again}) gives
\begin{equation}
\label{eq:estgenp1}
\int_0^1{\frac{d\alpha}{\mathcal{J}(\alpha,t)^{1+\frac{1}{\lambda p}}}}\sim C\mathcal{J}(\overline\alpha,t)^{-(\frac{1}{2}+\frac{1}{\lambda p})}
\end{equation}
for $\eta_*-\eta>0$ small, $\eta_*=\frac{1}{\lambda M_0}$, $p\in[1,+\infty)$ and $\lambda\in(1,+\infty)$\footnote[7]{When $\lambda=2/p,\, b=3/2$ and (\ref{eq:estgenp1}) reduces to (\ref{eq:genalpha2})ii).}. For the other cases and remaining integrals, we follow a similar argument to find
\begin{equation}
\label{eq:estgenp1neg}
\int_0^1{\frac{d\alpha}{\mathcal{J}(\alpha,t)^{1+\frac{1}{\lambda p}}}}\sim \frac{C}{\mathcal{J}(\underline\alpha,t)^{\frac{1}{2}+\frac{1}{\lambda p}}},\,\,\,\,\,\,\,\,\,\lambda<-\frac{2}{p},\,\,\,\,\,p\in[1,+\infty),
\end{equation}
\begin{equation}
\label{eq:estgenp2}
\int_0^1{\frac{d\alpha}{\mathcal{J}(\alpha,t)^{\frac{1}{\lambda p}}}}\sim
\begin{cases}
C,\,\,\,\,\,&\lambda>\frac{2}{p},\,\,\,p\geq1\,\,\,\,\text{or}\,\,\,\,\lambda\in\mathbb{R}^-,
\\
C\mathcal{J}(\overline\alpha,t)^{\frac{1}{2}-\frac{1}{\lambda p}},\,\,\,&1<\lambda<\frac{2}{p},\,\,\,\,\,\,\,1<p<2
\end{cases}
\end{equation}
and
\begin{equation}
\label{eq:upper1}
\begin{split}
\int_0^1{\frac{d\alpha}{\mathcal{J}(\alpha,t)^{p+\frac{1}{\lambda}}}}\sim C,\,\,\,\,\,\,\,\,\,\,\,\,\frac{2}{1-2p}<\lambda<0,\,\,\,p\geq1
\end{split}
\end{equation}
where the positive constants $C$ may depend on the choices for $\lambda$ and $p.$

Recall from Theorem \ref{thm:theoremtwogen} (see also appendix A) that
\begin{equation}
\label{eq:ah1}
\begin{split}
\lim_{t\to+\infty}\left\|u_x\right\|_{\infty}<+\infty,\,\,\,\,\,\,\,\,\,\,\lambda\in[0,1].
\end{split}
\end{equation}
In contrast, Theorem \ref{thm:maintheorem1}, which we established in Theorems  \ref{thm:theoremtwogen}, \ref{thm:theoremfour} and \ref{thm:theoremthree}, showed the existence of a finite $t_*>0$ such that 
\begin{equation}
\label{eq:ah2}
\begin{split}
\lim_{t\uparrow t_*}\left\|u_x\right\|_{\infty}=+\infty,\,\,\,\,\,\,\,\,\,\,\,\,\,\,\lambda\in\mathbb{R}\backslash[0,1].
\end{split}
\end{equation}
Consequently, $\left\|u_x\right\|_p$ exists globally for all $p\in[1,+\infty]$ and $\lambda\in[0,1]$. In the case of (\ref{eq:ah2}), Theorem \ref{thm:lp} below further examines the $L^p$ regularity of $u_x$ as $t$ approaches the finite $L^{\infty}$ blow-up time $t_*.$

\begin{theorem}
\label{thm:lp}
Consider the initial boundary value problem (\ref{eq:nonhomo})-(\ref{eq:pbc}) and let $t_*>0$ denote the finite $L^{\infty}$ blow-up time in Theorem \ref{thm:maintheorem1}. There exist smooth, mean-zero initial data such that:
\begin{enumerate}
\item\label{it:1lp} For $p\in(1,+\infty)$ and $\lambda\in(-\infty,-2/p]\cup(1,+\infty)$,\,\, $\lim_{t\uparrow t_*}\left\|u_x\right\|_p=+\infty$. 
%For $p>1$ and $\lambda\in(-\infty,-2]\cup(1,+\infty)$,\,\, $\lim_{t\uparrow t_*}\left\|u_x\right\|_p=+\infty$. Similarly for $p\in(1,+\infty)$ and $\lambda\in(-2,-2/p]$.
\item\label{it:2lp} For $p\in[1,+\infty)$ and $\frac{2}{1-2p}<\lambda<0,\,\, \lim_{t\uparrow t_*}\left\|u_x\right\|_p<+\infty.$
\item\label{it:energyitem} The energy $E(t)=\left\|u_x\right\|_2^2$ diverges as $t\uparrow t_*$ if $\lambda\in(-\infty,-2/3]\cup(1,+\infty)$ but remains finite for $t\in[0,t_*]$ when $\lambda\in(-2/3,0)$. Moreover, $\dot E(t)$ blows up to $+\infty$ as $t\uparrow t_*$ if $\lambda\in(-\infty,-1/2)\cup(1,+\infty)$ and $\dot E(t)\equiv0$ for $\lambda=-1/2;$ whereas, $\lim_{t\uparrow t_*}\dot E(t)=-\infty$ when $\lambda\in(-1/2,-2/5]$ but remains bounded, for all $t\in[0,t_*]$, if $\lambda\in(-2/5,0)$. 
\end{enumerate}
\end{theorem}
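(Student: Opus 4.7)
The plan is to exploit the explicit identities (\ref{eq:lpreg2}), (\ref{eq:energy2}) and (\ref{eq:dotderenergy}) together with the bounds (\ref{eq:upper}) and (\ref{eq:lower}), feeding into them the sharp leading-order asymptotics already recorded for the families $\bar{\mathcal{K}}_0,\bar{\mathcal{K}}_1$ and $\int_0^1 d\alpha/\mathcal{J}^{1+1/(\lambda p)}$, $\int_0^1 d\alpha/\mathcal{J}^{1/(\lambda p)}$, $\int_0^1 d\alpha/\mathcal{J}^{p+1/\lambda}$, namely (\ref{eq:comple3}), (\ref{eq:alphaposgen6}), (\ref{eq:alphaposgen7}), (\ref{eq:lastfin}), (\ref{eq:newest}), (\ref{eq:quick3}), (\ref{eq:quick4}), (\ref{eq:genalpha2}), (\ref{eq:alpha-2}) and the newly stated (\ref{eq:estgenp1})--(\ref{eq:upper1}), all derived through Lemma \ref{prop:prop} and Proposition \ref{lem:analcont}.

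For part (\ref{it:1lp}), I would substitute the relevant asymptotics into the lower bound (\ref{eq:lower}) and verify, in the spirit of the computation $C_4/C_3=1-\lambda/2$ from Theorem \ref{thm:theoremtwogen}, that the two terms inside the absolute value either blow up at genuinely different orders in $\mathcal{J}(\overline\alpha,t)$ or $\mathcal{J}(\underline\alpha,t)$, ruling out leading-order cancellation, or that the surviving $\Gamma$-coefficient of their algebraic combination is nonzero. For $\lambda>1$ the inputs are (\ref{eq:estgenp1}), (\ref{eq:estgenp2}) combined with (\ref{eq:comple3}), (\ref{eq:alphaposgen6}), (\ref{eq:alphaposgen7}); for $\lambda\le -2/p$ the parallel analysis at $\underline\alpha$ uses (\ref{eq:newest}), (\ref{eq:lastfin}), (\ref{eq:estgenp1neg}), (\ref{eq:estgenp2}). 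In each sub-case the remaining prefactor $|\lambda\eta|\bar{\mathcal{K}}_0^{2\lambda+1/p}$ in (\ref{eq:lower}) is either bounded or grows at a strictly slower rate than the dominant factor, so $\left\|u_x\right\|_p\to+\infty$. For part (\ref{it:2lp}) I instead use the upper bound (\ref{eq:upper}): the range $2/(1-2p)<\lambda<0$ sits inside $(-2,0)$, and on it estimates (\ref{eq:newest}), (\ref{eq:quick3}), (\ref{eq:quick4}) together with (\ref{eq:lastfin})i keep $\bar{\mathcal{K}}_0,\bar{\mathcal{K}}_1$ bounded above and away from zero, while (\ref{eq:upper1}) bounds $\int_0^1 d\alpha/\mathcal{J}^{p+1/\lambda}$ uniformly; since $\eta(t)$ is smooth and strictly increasing with $\eta(t_*)=\eta_*<+\infty$, and $\left\|u_x(x,0)\right\|_p<+\infty$ for smooth initial data, the right-hand side of (\ref{eq:upper}) stays finite on $[0,t_*]$.

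For part (\ref{it:energyitem}) I would repeat the Lemma \ref{prop:prop}/Proposition \ref{lem:analcont} computation with $b=2+1/\lambda$ and $b=3+1/\lambda$ to obtain $\bar{\mathcal{K}}_2\sim C\,\mathcal{J}(\underline\alpha,t)^{-(3/2+1/\lambda)}\to+\infty$ precisely when $\lambda<-2/3$ (logarithmic at $\lambda=-2/3$), and $\bar{\mathcal{K}}_3$ divergent precisely when $\lambda<-2/5$. Substituting into the energy formula (\ref{eq:energy2}), combined with the boundedness of $\bar{\mathcal{K}}_0,\bar{\mathcal{K}}_1,\eta$ on the relevant negative-$\lambda$ range, yields the stated dichotomy for $E(t)$ on $\lambda<0$, while $\lambda>1$ reduces to part (\ref{it:1lp}) at $p=2$. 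For $\dot E(t)$ the prefactor $1+2\lambda$ gives the $\lambda=-1/2$ statement on the nose; in the remaining sub-ranges the bracket in (\ref{eq:dotderenergy}) is dominated, at leading order, by the most divergent of $\bar{\mathcal{K}}_3/\bar{\mathcal{K}}_1$ and $\bar{\mathcal{K}}_2/\bar{\mathcal{K}}_0$, whose sign together with that of $1+2\lambda$ selects among the four outcomes $+\infty$, $-\infty$ or bounded. The main obstacle is the non-cancellation verification at leading order, both inside (\ref{eq:lower}) and inside the bracket in (\ref{eq:dotderenergy}): as with the ratio $1-\lambda/2$ in Theorem \ref{thm:theoremtwogen}, each coefficient extracted from Proposition \ref{lem:analcont} is a $\Gamma$-ratio that must be checked nonzero uniformly over every $(\lambda,p)$ sub-range, with separate care at the thresholds $\lambda\in\{-2/3,\,-2/5,\,-1/2,\,2\}$ where logarithmic corrections enter.
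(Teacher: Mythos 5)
Your proposal follows essentially the same route as the paper: lower bound (\ref{eq:lower}) with the hypergeometric asymptotics for divergence, upper bound (\ref{eq:upper}) with (\ref{eq:upper1}) and the boundedness of $\bar{\mathcal{K}}_0,\bar{\mathcal{K}}_1$ for part (\ref{it:2lp}), and the estimates $\bar{\mathcal{K}}_2\sim C\mathcal{J}(\underline\alpha,t)^{-(3/2+1/\lambda)}$, $\bar{\mathcal{K}}_3\sim C\mathcal{J}(\underline\alpha,t)^{-(5/2+1/\lambda)}$ fed into (\ref{eq:energy2}) and (\ref{eq:dotderenergy}) for part (\ref{it:energyitem}), with the thresholds $-2/3$, $-1/2$, $-2/5$ emerging exactly as you predict. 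One caveat: your claim that in each sub-case the prefactor $|\lambda\eta|^p\bar{\mathcal{K}}_0^{1+2\lambda p}$ grows strictly slower than the numerator fails for $\lambda\in(1,2)$ and $p$ not close to $1$; there the net exponent in the lower bound works out to $\sigma(\lambda,p)=\tfrac{3p}{2}-\tfrac12-\lambda p$, which is negative only when $\tfrac12(3-\tfrac1p)<\lambda<\tfrac2p$, so the paper first proves divergence for $p-1>0$ small and then invokes monotonicity of $\left\|\cdot\right\|_p$ on the unit interval to cover all $p>1$. Apart from that extra step (and the coefficient non-cancellation checks you already flag), your outline matches the paper's proof.
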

\begin{proof}
\textit{\textbf{Case $\lambda,\, p\in(1,+\infty).$}}
\vspace{0.02in}

First, consider the lower bound (\ref{eq:lower}) for $p\in(1,2)$ and $\lambda\in(1,2/p).$ Then, $\lambda\in(1,2)$ so that (\ref{eq:alphaposgen7}), (\ref{eq:alphaposgen6})i), (\ref{eq:estgenp1}) and (\ref{eq:estgenp2})ii) imply \begin{equation*}
\begin{split}
\left\|u_x(x,t)\right\|_p^p\geq\frac{\left|\int_0^1{f(\alpha,t)d\alpha}\right|^p}{\left|\lambda\eta(t)\right|^p\bar{\mathcal{K}}_0(t)^{^{1+2\lambda p}}}\sim C\mathcal{J}(\overline\alpha,t)^{\sigma(\lambda,p)}
\end{split}
\end{equation*}
for $\eta_*-\eta>0$ small and $\sigma(\lambda,p)=\frac{3p}{2}-\frac{1}{2}-\lambda p.$ By the above restrictions on $\lambda$ and $p,$ we see that $\sigma(\lambda,p)<0$ for $\frac{1}{2}\left(3-\frac{1}{p}\right)<\lambda<\frac{2}{p}$, $p\in(1,5/3).$ Then, by choosing $p-1>0$ arbitrarily small, $\left\|u_x\right\|_p\to+\infty$ as $t\uparrow t_*$ for $\lambda\in(1,2).$ Next, let $\lambda\in(2,+\infty)$ and $p\in(1,+\infty).$ This means $\lambda>\frac{2}{p}$, and so (\ref{eq:alphaposgen7}), (\ref{eq:alphaposgen6})ii), (\ref{eq:estgenp1}) and (\ref{eq:estgenp2})i) now yield
\begin{equation}
\label{eq:lower3}
\begin{split}
\left\|u_x(x,t)\right\|_p\geq\frac{\left|\int_0^1{f(\alpha,t)d\alpha}\right|}{\left|\lambda\eta(t)\right|\bar{\mathcal{K}}_0(t)^{^{2\lambda+\frac{1}{p}}}}\sim \frac{C}{\mathcal{J}(\overline\alpha,t)^{\frac{1}{2}+\frac{1}{\lambda}}}\to+\infty
\end{split}
\end{equation}
as $t\uparrow t_*.$ This proves part (\ref{it:1lp}) of the Theorem for $\lambda\in(1,+\infty)$.\footnote[8]{If $\lambda=2,\, \lambda>\frac{2}{p}$ for $p>1$ and result follows from (\ref{eq:genalpha2}), (\ref{eq:lower}), (\ref{eq:estgenp1}) and (\ref{eq:estgenp2})i).}
\vspace{0.02in}

\textsl{\textit{\textbf{Case $\lambda\in(-\infty,0)$ and $p\in[1,+\infty)$.}}}
\vspace{0.02in}

For $\lambda\in(-\infty,0)$, we keep in mind the estimates (\ref{eq:newest}), (\ref{eq:lastfin})i), (\ref{eq:quick3}) and (\ref{eq:quick4}) which describe the behaviour of $\bar{\mathcal{K}}_i(t),\, i=0,1$ as $\eta\uparrow\eta_*.$ Consider the upper bound (\ref{eq:upper}) for $p\in[1,+\infty)$ and $\frac{2}{1-2p}<\lambda<0.$ Then $\lambda\in(-2,0)$, equation (\ref{eq:upper1}), and  the aforementioned estimates imply that, as $t\uparrow t_*$, 
\begin{equation*} 
\begin{split}
\left\|u_x(x,t)\right\|_p^p\leq\frac{2^{p-1}}{\left|\lambda\eta(t)\right|^p\bar{\mathcal{K}}_0(t)^{^{1+2\lambda p}}}\left(\int_0^1{\frac{d\alpha}{\mathcal{J}(\alpha,t)^{^{p+\frac{1}{\lambda }}}}}+\frac{\bar{\mathcal{K}}_1(t)^{^p}}{\bar{\mathcal{K}}_0(t)^{^{p-1}}}\right)\to C.
\end{split}
\end{equation*}
Here, $C\in\mathbb{R}^+$ depends on the choice of $\lambda$ and $p.$ By the above, we conclude that 
$$\lim_{t\uparrow t_*}\left\|u_x(x,t)\right\|_p<+\infty$$ 
for $\frac{2}{1-2p}<\lambda<0$ and $p\in[1,+\infty).$ Now, consider the lower bound (\ref{eq:lower}) with $p\in(1,+\infty)$ and $-2<\lambda<-\frac{2}{p}<\frac{2}{1-2p}.$ Then, by (\ref{eq:estgenp1neg}), (\ref{eq:estgenp2})i) and corresponding estimates on $\bar{\mathcal{K}}_i(t),\, i=0,1,$ we find that
\begin{equation}
\label{eq:lower23}
\begin{split}
\left\|u_x(x,t)\right\|_p\geq\frac{\left|\int_0^1{f(\alpha,t)d\alpha}\right|}{\left|\lambda\eta(t)\right|\bar{\mathcal{K}}_0(t)^{^{2\lambda+\frac{1}{p}}}}\sim C\mathcal{J}(\underline\alpha,t)^{-\left(\frac{1}{2}+\frac{1}{\lambda}\right)}
\end{split}
\end{equation}
for $\eta_*-\eta>0$ small. Therefore, 
\begin{equation}
\label{eq:bah1}
\begin{split}
\lim_{t\uparrow t_*}\left\|u_x(x,t)\right\|_p=+\infty
\end{split}
\end{equation}
for $p\in(1,+\infty)$ and $\lambda\in(-2,-2/p]$\footnote[9]{For the case $\lambda=-\frac{2}{p}$ with $p\in(1,+\infty),$ we simply use (\ref{eq:alpha-2}) instead of (\ref{eq:estgenp1neg}).}. 
Finally, let $\lambda\in(-\infty,-2)$ and $p\in(1,+\infty).$ Then $\lambda<-\frac{2}{p}$ and it is easy to check that (\ref{eq:lower23}), with different constants $C>0,$ also holds. As a result, (\ref{eq:bah1}) follows for $p>1$ and $\lambda\in(-\infty,-2]$\footnote[10]{If $\lambda=-2,\, \lambda<-\frac{2}{p}$ for $p>1.$ Result follows as above with (\ref{eq:alpha-2}) instead of (\ref{eq:estgenp1neg}).}. Since we already established that $u_x\in L^{\infty}$ for all time when $\lambda\in[0,1]$ (see Theorem \ref{thm:theoremtwogen}), this concludes the proof of parts (\ref{it:1lp}) and (\ref{it:2lp}) of the Theorem. 

For part (\ref{it:energyitem}), notice that when $p=2,$ parts (\ref{it:1lp}) and (\ref{it:2lp}), as well as Theorem \ref{thm:theoremtwogen} imply that, as $t\uparrow t_*$, both $E(t)=\left\|u_x\right\|_2^2$ and $\dot E(t)$ diverge to $+\infty$ for $\lambda\in(-\infty,-1]\cup(1,+\infty)$ while $E(t)$ remains finite if $\lambda\in(-2/3,1].$ Therefore we still have to establish the behaviour of $E(t)$ when $\lambda\in(-1,-2/3]$ and $\dot E(t)$ for $\lambda\in(-1,0)\backslash\{-1/2\}.$ From (\ref{eq:quick3}), (\ref{eq:quick4}) and (\ref{eq:energy2}), we see that, as $t\uparrow t_*,$ any blow-up in $E(t)$ for $\lambda\in(-1,-2/3]$ must come from the $\bar{\mathcal{K}}_2(t)$ term. Using proposition \ref{lem:analcont} and Lemma \ref{prop:prop}, we estimate\footnote[11]{Under the usual assumption $u_0'''(\underline\alpha)\neq0.$}
\begin{equation}
\label{eq:estthe2}
\bar{\mathcal{K}}_2(t)\sim
\begin{cases}
C\mathcal{J}(\underline\alpha,t)^{-\left(\frac{3}{2}+\frac{1}{\lambda}\right)},\,\,\,\,\,\,\,\,\,\,&\lambda\in(-1,-2/3),
\\
-C\log\left(\mathcal{J}(\underline\alpha,t)\right),\,\,\,\,\,\,\,\,\,\,&\lambda=-2/3,
\\
C,\,\,\,\,\,\,\,&\lambda\in(-2/3,0)
\end{cases}
\end{equation}
for $\eta_*-\eta>0$ small. Then, (\ref{eq:quick3}), (\ref{eq:quick4}), and (\ref{eq:energy2}) imply that, as $t\uparrow t_*,$ both $E(t)$ and $\dot E(t)$ blow-up to $+\infty$ for $\lambda\in(-1,-2/3].$ Now, from (\ref{eq:dotderenergy})i), we have that
\begin{equation}
\label{eq:l3}
\left|\dot E(t)\right|\leq\left|1+2\lambda\right|\left\|u_x\right\|_3^3
\end{equation}
so that Theorem \ref{thm:theoremtwogen} implies that $\dot E(t)$ remains finite for all time if $\lambda\in[0,1]$. Also, since $3+\frac{1}{\lambda}\leq0$ for all $\lambda\in[-1/3,0)$, we use (\ref{eq:quick3}), (\ref{eq:quick4}) and (\ref{eq:estthe2})iii) on (\ref{eq:dotderenergy})iii) to conclude that 
$$\lim_{t\uparrow t_*}\left|\dot E(t)\right|<+\infty$$ 
for $\lambda\in[-1/3,0)$ as well. Moreover, by part (\ref{it:2lp}), $\lim_{t\uparrow t_*}\left\|u_x\right\|_3<+\infty$ for $\lambda\in(-2/5,0).$ Then, (\ref{eq:l3}) implies that $\dot E(t)$ also remains finite for $\lambda\in(-2/5,-1/3)$. Lastly, estimating $\bar{\mathcal{K}}_3(t)$ yields
\begin{equation}
\label{eq:estthe3}
\bar{\mathcal{K}}_3(t)\sim
\begin{cases}
C\mathcal{J}(\underline\alpha,t)^{-\left(\frac{5}{2}+\frac{1}{\lambda}\right)},\,\,\,\,\,\,\,\,\,\,&\lambda\in(-2/3,-2/5),
\\
%C\mathcal{J}(\underline\alpha,t)^{-1},\,\,\,\,\,\,\,\,\,\,\,\,\,\,&\lambda=-\frac{2}{3},
%\\
-C\log\left(\mathcal{J}(\underline\alpha,t)\right),\,\,\,\,\,\,\,\,\,&\lambda=-2/5.
\end{cases}
\end{equation}
As a result, (\ref{eq:quick3}), (\ref{eq:quick4}), (\ref{eq:estthe2})iii) and (\ref{eq:dotderenergy})iii) imply that 
\begin{equation*}
\lim_{t\uparrow t_*}\dot E(t)=
\begin{cases}
+\infty,\,\,\,\,\,\,\,\,\,\,&\lambda\in(-2/3,-1/2),
\\
-\infty,\,\,\,\,\,\,\,\,\,&\lambda\in(-1/2,-2/5].
\end{cases}
\end{equation*}
We refer the reader to table \ref{table:lptable} in \S \ref{sec:summary} for a summary of the above results.\qquad\end{proof}

Notice that Theorems \ref{thm:maintheorem1}, \ref{thm:lp} and inequality (\ref{eq:l3}) yield a complete description of the $L^3$ regularity for $u_x$: if $\lambda\in[0,1]$, $\lim_{t\to+\infty}\left\|u_x\right\|_3=C$ where $C\in\mathbb{R}^+$ for $\lambda=1$ but $C=0$ when $\lambda\in(0,1)$, whereas, for $t_*>0$ the finite $L^{\infty}$ blow-up time for $u_x$ in Theorem \ref{thm:maintheorem1}, 
\begin{equation}
\label{eq:l3final}
\lim_{t\uparrow t_*}\left\|u_x(x,t)\right\|_3=
\begin{cases}
+\infty,\,\,\,\,\,\,\,\,\,\,&\lambda\in(-\infty,-2/5]\cup(1,+\infty),
\\
C,\,\,\,\,\,\,&\lambda\in(-2/5,0)
\end{cases}
\end{equation}
where the positive constants $C$ depend on the choice of $\lambda\in(-2/5,0).$

\begin{remark}
Theorem \ref{thm:lp} implies that for every $p>1$, $L^p$ blow-up occurs for $u_x$ if $\lambda\in\mathbb{R}\backslash(-2,1],$ whereas for $\lambda\in(-2,0)$, $u_x$ remains in $L^1$ but blows up in particular, smaller $L^p$ spaces. This suggests a weaker type of blow-up for the latter which certainly agrees with our $L^{\infty}$ results where a ``stronger'', two-sided, everywhere blow-up takes place for $\lambda\in\mathbb{R}\backslash(-2,1],$ but a ``weaker'', one-sided, discrete blow-up occurs when $\lambda\in(-2,0).$ 
\end{remark}

\begin{remark}
For $V(t)=\int_0^1{u_x^3dx}$, the authors in \cite{Okamoto1} derived a finite upper bound 
\begin{equation}
\label{eq:oktime}
T^*=\left(\frac{3}{\left|1+2\lambda\right|E(0)}\right)^{\frac{1}{2}}
\end{equation}
for the blow-up time of $E(t)$ for $\lambda<-1/2$ and 
\begin{equation}
\label{eq:okcond}
V(0)<0,\,\,\,\,\,\,\,\,\,\,\,\,\,\,\,\,\,\frac{\left|1+2\lambda\right|}{2}V(0)^2\geq\frac{2}{3}E(0)^3.
\end{equation}
If (\ref{eq:okcond})i) holds but we reverse (\ref{eq:okcond})ii), then they proved that $\dot E(t)$ blows up instead. Now, from Theorem \ref{thm:lp}(\ref{it:energyitem}) we have that, in particular for $\lambda\in(-2/3,-1/2)$, $E(t)$ remains bounded for $t\in[0,t_*]$ but $\dot E(t)\to+\infty$ as $t\uparrow t_*$. Here, $t_*>0$ denotes the finite $L^{\infty}$ blow-up time for $u_x$ (see Theorem \ref{thm:theoremthree}) and satisfies (\ref{eq:esttime2}). Therefore, further discussion is required to clarify the apparent discrepancy between the two results for $\lambda\in(-2/3,-1/2)$ and $u_0'$ satisfying both conditions in (\ref{eq:okcond}). Our claim is that for these values of $\lambda$, $t_*<T^*$. Specifically, $E(t)$ remains finite for all $t\in[0,t_*]\subset[0,T^*]$, while $\dot E(t)\to+\infty$ as $t\uparrow t_*$. From (\ref{eq:dotderenergy})i) and (\ref{eq:okcond})ii), we have that $\frac{\dot E(0)^2}{2\left|1+2\lambda\right|}\geq\frac{2}{3}E(0)^3$, or $\frac{1}{(\left|1+2\lambda\right|E(0))^3}\geq\frac{4}{3(\left|1+2\lambda\right|\dot E(0))^2}.$ As a result, (\ref{eq:oktime}) yields 
\begin{equation} 
\label{eq:oktime2}
T^*\geq\left(\frac{6}{\left|1+2\lambda\right|\dot E(0)}\right)^{\frac{1}{3}}
\end{equation}
where we used $\dot E(0)>0$; a consequence of (\ref{eq:dotderenergy})i), (\ref{eq:okcond})i) and $\lambda\in(-2/3,-1/2)$. Now, for instance, suppose $0<M_0\leq\left|m_0\right|$.\footnote[12]{A natural case to consider given (\ref{eq:okcond})i).} Then
\begin{equation} 
\label{eq:simpleineq}
-V(0)=\bigg|\int_0^1{u_0'(x)^3dx}\bigg|\leq\max_{x\in[0,1]}\left|u_0'(x)\right|^3=\left|m_0\right|^3,
\end{equation}
which we use on (\ref{eq:dotderenergy})i) to obtain $0<\dot E(0)\leq\left|1+2\lambda\right|\left|m_0\right|^3,$ or equivalently
\begin{equation} 
\label{eq:p5}
\frac{6}{\left|1+2\lambda\right|\dot E(0)}\geq\frac{6}{\left|1+2\lambda\right|^2\left|m_0\right|^3}.
\end{equation}
Consequently, (\ref{eq:esttime2}), (\ref{eq:oktime2}) and (\ref{eq:p5}) yield 
\begin{equation}
\label{eq:oktime3}
T^*\geq\left(\frac{6}{\left|1+2\lambda\right|^{2}\left|m_0\right|^3}\right)^{\frac{1}{3}}>\frac{1}{\left|1+2\lambda\right|^{\frac{2}{3}}\left|m_0\right|}>\frac{1}{\left|\lambda\right|\left|m_0\right|}=\eta_*\geq t_* 
\end{equation}
for $\lambda\in(-2/3,-1/2)$. If $\lambda\leq-2/3$, both results concerning $L^2$ blow-up of $u_x$ coincide. Furthermore, in \cite{Wunsch3} the authors derived a finite upper bound $T_*=\frac{3}{(1+3\lambda)}V(0)^{-\frac{1}{3}}$ for the blow-up time of $V(t)$ to negative infinity valid as long as $V(0)<0$ and $\lambda<-1/3$. Clearly, $T_*$ also serves as an upper bound for the breakdown of $\left\|u_x\right\|_3$ for $\lambda<-1/3$, or $\dot E(t)=(1+2\lambda)V((t)$ if $\lambda\in(-\infty,-1/3)\backslash\{-1/2\}$. However, (\ref{eq:l3final}) and Theorem \ref{thm:lp}(\ref{it:1lp}) prove the existence of a finite $t_*>0$ such that, particularly for $\lambda\in(-2/5,-1/3]$, $\left\|u_x\right\|_3$ remains finite for $t\in[0,t_*]$ while $\lim_{t\uparrow t_*}\left\|u_x\right\|_{6}=+\infty$. This in turn implies the local boundedness of $\dot E(t)$ for $t\in[0,t_*]$ and $\lambda\in(-2/5,-1/3]$. Similar to the previous case, we claim that $t_*<T_*$. Here, once again, we consider the case $0<M_0\leq\left|m_0\right|$. Accordingly, (\ref{eq:esttime2}) and (\ref{eq:simpleineq}) imply 
$$T_*=\frac{3}{(1+3\lambda)V(0)^{\frac{1}{3}}}\geq\frac{3}{\left|1+3\lambda\right|\left|m_0\right|}>\frac{1}{\left|\lambda\right|\left|m_0\right|}=\eta_*\geq t_*.$$
For the remaining values $\lambda\leq-2/5$, both our results and those established in \cite{Wunsch3} regarding blow-up of $V(t)$ agree. A simple example is given by $u_0'(x)=\sin(2\pi x)+\cos(4\pi x)$ for which $V(0)=-3/4$, $E(0)=1$, $m_0=-2$ and $M_0\sim1.125$. Then, for $\lambda=-3/5\in(-2/3,-1/2)$, we have $T^*=\sqrt{15}>\eta_*=5/6\geq t_*\geq0.34$, whereas, if $\lambda=-7/20\in(-2/5,-1/3)$, $T_*=20(6)^{2/3}>10/7=\eta_*\geq t_*\geq0.59$.
%data satisfying certain integral conditions, $E(t)$ blows up in finite-time for $\lambda\in(-\infty,-1/2),$ otherwise $\dot E(t)$ could blow-up instead. However, our results indicate that is the latter which describes the actual blow-up mechanism for our class of  smooth data when $\lambda\in(-2/3,-1/2),$ i.e. if $\lambda\in(-2/3,-1/2),$ $u_x$ blows up  in the $L^3$ rather than the $L^2$ norm ($\dot E(t)\to+\infty$), whereas for $\lambda\in(-\infty,-2/3],\, \left\|u_x\right\|_2$ itself diverges.
\end{remark}

\subsection{\textbf{Piecewise Constant and Piecewise Linear Initial Data}}
\label{subsec:pcpl}
In the previous section, we took smooth data $u_0^\prime$ which attained its extreme values $M_0>0>m_0$ at finitely many points $\overline\alpha_i$ and $\underline\alpha_j\in [0,1]$, respectively,  with $u_0'$ having, relative to the sign of $\lambda,$ quadratic local behaviour near these locations. In this section, two other classes of data are considered which violate these assumptions. In \S \ref{subsubsec:pc}, $L^p$ regularity of solutions  is examined for $u_0^\prime(\alpha)\in PC_{\mathbb{R}}(0,1)$, the class of mean-zero, piecewise constant functions. Subsequently, the case $u_0^{\prime\prime}(\alpha)\in PC_{\mathbb{R}}(0,1)$ is examined via a simple example in \S \ref{subsubsec:pl}.

\subsubsection{\textbf{$n-$phase Piecewise Constant $u_0^\prime(x)$}}\hfill
\label{subsubsec:pc}

Let $\chi_i(\alpha),\,i=1,...,n$ denote the characteristic function for the intervals $\Omega_i=(\alpha_{i-1},\alpha_i)\subset[0,1]$ with $\alpha_0=0,\, \alpha_n=1$ and $\Omega_j\cap\Omega_k=\varnothing,\, j\neq k,$ i.e.
\begin{equation}
\label{eq:characteristic}
\chi_i(\alpha)=
\begin{cases}
1,\,\,\,\,\,\,\,\,\,&\alpha\in\Omega_i,
\\
0,\,\,\,\,\,\,\,\,&\alpha\notin\Omega_i.
\end{cases}
\end{equation}
Then, for $h_i\in\mathbb{R},$ let $PC_{\mathbb{R}}(0,1)$ denote the space of mean-zero, simple functions:
\begin{equation}
\label{eq:pcdata0}
\left\{g(\alpha)\in C^0(0,1)\,\, a.e.\,\bigg|\,g(\alpha)=\sum_{i=1}^nh_i\chi_i(\alpha)\,\,\text{and}\,\,\sum_{i=1}^nh_i\mu(\Omega_i)=0\right\}
\end{equation}
where $\mu(\Omega_i)=\alpha_i-\alpha_{i-1},$ the Lebesgue measure of $\Omega_i.$ Observe that for $u_0^\prime(\alpha)\in PC_{\mathbb{R}}(0,1)$ and $\lambda\neq0,$ (\ref{eq:def}), (\ref{eq:characteristic}) and (\ref{eq:pcdata0}) imply that
\begin{equation}
\label{eq:tone}
\begin{split}
\bar{\mathcal{K}}_i(t)=\sum_{j=1}^n(1-\lambda\eta(t)h_j)^{-i-\frac{1}{\lambda}}\mu(\Omega_j).
\end{split}
\end{equation}
We prove the following Theorem:
\begin{theorem}
\label{thm:pc}
Consider the initial boundary value problem (\ref{eq:nonhomo})-(\ref{eq:pbc}) for periodic $u_0^\prime(\alpha)\in PC_{\mathbb{R}}(0,1).$ Let $T>0$ and assume solutions are defined for all $t\in[0,T].$ Then, the representation formula (\ref{eq:mainsolu}) implies that no global $W^{1,\infty}(0,1)$ solution can exist if $T\geq t_*$, where $t_*=+\infty$ for $\lambda\in[0,+\infty)$ and $0<t_*<+\infty$ otherwise. In addition, $\lim_{t\uparrow t_*}\left\|u_x(x,t)\right\|_1=+\infty$ if $\lambda\in(-\infty,-1),$ while 
\begin{equation*}
\lim_{t\uparrow t_*}\left\|u_x(x,t)\right\|_p=
\begin{cases}
C,\,\,\,\,\,\,\,\,&-\frac{1}{p}\leq\lambda<0,
\\
+\infty,\,\,\,\,\,\,\,\,\,&-1\leq\lambda<-\frac{1}{p}
\end{cases}
\end{equation*}
for $p\geq1,\, \lambda\in[-1,0)$ and $C\in\mathbb{R}^+$ that depend on the choice of $\lambda$ and $p.$
\end{theorem}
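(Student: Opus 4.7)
The plan is to exploit the fact that both the $L^p$ representation (\ref{eq:lpreg2}) and the integrals $\bar{\mathcal{K}}_i$ given by (\ref{eq:tone}) reduce to finite sums when $u_0'\in PC_{\mathbb{R}}(0,1)$, so that all relevant asymptotics as $\eta\uparrow\eta_*$ can be read off by isolating a single term. First I would pin down the lifetime. For $\lambda>0$, set $M_0=\max_j h_j$ (attained on some $\Omega_{i_0}$) and $\eta_*=1/(\lambda M_0)$; the $i_0$-term in (\ref{eq:tone}) then gives $\bar{\mathcal{K}}_0(\eta)\sim\mu(\Omega_{i_0})(1-\lambda\eta M_0)^{-1/\lambda}$, so that $\bar{\mathcal{K}}_0^{2\lambda}$ develops a non-integrable $(1-\lambda\eta M_0)^{-2}$ singularity and $t_*=\int_0^{\eta_*}\bar{\mathcal{K}}_0^{2\lambda}\,d\mu=+\infty$. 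For $\lambda<0$, with $m_0=\min_j h_j<0$ and $\eta_*=1/(\lambda m_0)$, every summand in (\ref{eq:tone}) stays bounded on $[0,\eta_*]$, while the mean-zero condition forces some $h_k\geq 0$ and therefore $\mathcal{J}_k\geq 1$, giving $\bar{\mathcal{K}}_0\geq\mu(\Omega_k)>0$ uniformly and hence $0<t_*<+\infty$. The case $\lambda=0$ is handled separately in Appendix A.

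For $L^p$-regularity at $\lambda\in[-1,0)$ I would substitute $\mathcal{J}_j(t):=1-\lambda\eta(t)h_j$ into the piecewise-constant specialization of (\ref{eq:lpreg2}):
\begin{equation*}
\|u_x(x,t)\|_p^{p}=\frac{1}{|\lambda\eta|^{p}\bar{\mathcal{K}}_0^{1+2\lambda p}}\sum_{j=1}^{n}\mu(\Omega_j)|f_j(t)|^{p},\quad f_j=\mathcal{J}_j^{-1-\tfrac{1}{\lambda p}}-\frac{\bar{\mathcal{K}}_1}{\bar{\mathcal{K}}_0}\,\mathcal{J}_j^{-\tfrac{1}{\lambda p}}.
\end{equation*}
For $\lambda\in[-1,0)$ the exponents $-i-1/\lambda=-i+1/|\lambda|$ in (\ref{eq:tone}) are nonnegative for $i=0,1$, so both $\bar{\mathcal{K}}_0$ and $\bar{\mathcal{K}}_1$ remain bounded with strictly positive limits, making the prefactor and the ratio $\bar{\mathcal{K}}_1/\bar{\mathcal{K}}_0$ tend to positive constants, and the $f_j$ for $j\neq j_0$ stay bounded because $\mathcal{J}_j$ remains away from $0$. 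Only the $j_0$-term can be singular: rewriting its exponents as $-1+\tfrac{1}{|\lambda|p}$ and $\tfrac{1}{|\lambda|p}$, the second term in $f_{j_0}$ vanishes as $\mathcal{J}_{j_0}\to 0$, while the first behaves as $\mathcal{J}_{j_0}^{-1+1/(|\lambda|p)}$, which is bounded precisely when $|\lambda|p\leq 1$ (i.e.\ $-1/p\leq\lambda<0$) and diverges when $|\lambda|p>1$ (i.e.\ $-1\leq\lambda<-1/p$). This produces the claimed dichotomy.

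The $\|u_x\|_1$-blow-up for $\lambda<-1$ requires more care because in this range $\bar{\mathcal{K}}_1$ itself diverges: its $j_0$-term has exponent $-1+1/|\lambda|<0$, so $\bar{\mathcal{K}}_1/\bar{\mathcal{K}}_0\asymp\mathcal{J}_{j_0}^{-1+1/|\lambda|}\to+\infty$ while $\bar{\mathcal{K}}_0$ is still bounded. The $p=1$ version of the sum above then has $f_{j_0}\sim+\mathcal{J}_{j_0}^{-1+1/|\lambda|}$ (positive) and each $f_j$ with $j\neq j_0$ of the form $-(\bar{\mathcal{K}}_1/\bar{\mathcal{K}}_0)\mathcal{J}_j^{1/|\lambda|}$ (negative) of the \emph{same} leading order. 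The main obstacle is to rule out catastrophic cancellation in $\sum_j\mu(\Omega_j)|f_j|$: one cannot simply add the signed leading orders, since $\sum_j\mu(\Omega_j)f_j\equiv 0$ by $\bar{\mathcal{K}}_1-(\bar{\mathcal{K}}_1/\bar{\mathcal{K}}_0)\bar{\mathcal{K}}_0=0$. Because the sign pattern is opposite in the regime of interest, taking absolute values doubles rather than annihilates the leading contribution, so $\sum_j\mu(\Omega_j)|f_j|\asymp\mathcal{J}_{j_0}^{-1+1/|\lambda|}\to+\infty$, and together with the bounded positive prefactor this yields $\|u_x\|_1\to+\infty$, completing the proof.
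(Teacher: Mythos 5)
Your proposal is correct and follows essentially the same route as the paper: both specialize the representation formulae to finite sums over the intervals $\Omega_j$, isolate the single singular index where $\mathcal{J}_j\to0$, and read the lifetime off $dt=\bar{\mathcal{K}}_0(t)^{2\lambda}d\eta$, with your asymptotics for $\bar{\mathcal{K}}_0$, $\bar{\mathcal{K}}_1$ and the $L^p$ sums matching the paper's estimates term for term. The only step you leave implicit is the $W^{1,\infty}$ breakdown at the finite $t_*$ for $\lambda<0$ (the paper obtains it from $\gamma_\alpha(\underline\alpha,t)\to0$ together with (\ref{eq:alt})), but this is immediate from your $L^p$ dichotomy by choosing $p>1/\left|\lambda\right|$, and your explicit sign argument ruling out cancellation in the $p=1$, $\lambda<-1$ case is, if anything, more careful than the paper's treatment.
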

\begin{proof}
Let $C$ denote a generic constant which may depend on $\lambda$ and $p.$ Since 
\begin{equation}
\label{eq:pcdata}
u_0^\prime(\alpha)=\sum_{i=1}^nh_i\chi_i(\alpha),
\end{equation}
for $h_i\in\mathbb{R}$ as in (\ref{eq:pcdata0}), then (\ref{eq:sum}) and (\ref{eq:tone}) give
\begin{equation}
\label{eq:sumpc2}
\begin{split}
\gamma_{\alpha}(\alpha,t)^{-\lambda}=(1-\lambda\eta(t)\sum_{i=1}^nh_i\chi_i(\alpha))\left(\sum_{i=1}^n(1-\lambda\eta(t)h_i)^{-\frac{1}{\lambda}}\mu(\Omega_i)\right)^{\lambda}
\end{split}
\end{equation}
for $\eta\in[0,\eta_*),\, \eta_*$ as defined in (\ref{eq:defeta*}) and
\begin{equation}
\label{eq:maxmin}
\begin{cases}
M_0=\max_{i}h_i>0,
\\
m_0=\min_{i}h_i<0.
\end{cases}
\end{equation}
Let $\mathcal{I}_{max}$ and $\mathcal{I}_{min}$ denote the sets of indexes for the intervals $\overline\Omega_i$ and $\underline\Omega_i$ respectively, defined by $\overline\Omega_i\equiv\left\{\overline\alpha\in[0,1]\,|\,u_0^\prime(\overline\alpha)=M_0\right\}$ and $\underline\Omega_i\equiv\left\{\underline\alpha\in[0,1]\,|\,u_0^\prime(\underline\alpha)=m_0\right\}$.
\vspace{0.02in}

\textbf{\textsl{\textit{Global estimates for $\lambda\in(0,+\infty).$}}}
\vspace{0.02in}

Let $\lambda\in(0,+\infty)$ and $\eta_*=\frac{1}{\lambda M_0}.$ Using the above definitions, we may write
\begin{equation}
\label{eq:sumpc3}
\begin{split}
1-\lambda\eta(t)\sum_{i=1}^nh_i\chi_i(\alpha)=1-\lambda\eta(t)\left(\sum_{i\in\mathcal{I}_{max}}M_0\chi_i(\alpha)+\sum_{i\notin\mathcal{I}_{max}}h_i\chi_i(\alpha)\right)
\end{split}
\end{equation}
and
\begin{equation}
\label{eq:sumpc5}
\begin{split}
\sum_{i=1}^n(1-\lambda\eta(t)h_i)^{-\frac{1}{\lambda}}\mu(\Omega_i)=&\sum_{i\in\mathcal{I}_{max}}(1-\lambda\eta(t)M_0)^{-\frac{1}{\lambda}}\mu(\overline\Omega_i)
\\
+&\sum_{i\notin\mathcal{I}_{max}}(1-\lambda\eta(t)h_i)^{-\frac{1}{\lambda}}\mu(\Omega_i).
\end{split}
\end{equation}
Then, for fixed $i\in\mathcal{I}_{max}$ choosing $\overline\alpha\in\overline\Omega_i$ and substituting into (\ref{eq:sumpc3}), we find
\begin{equation}
\label{eq:sumpc4}
\begin{split}
1-\lambda\eta(t)\sum_{i=1}^nh_i\chi_i(\overline\alpha)=1-\lambda\eta(t)M_0.
\end{split}
\end{equation}
Using (\ref{eq:sumpc5}), (\ref{eq:sumpc4}) and (\ref{eq:sumpc2}), we see that, for $\eta\in[0,\eta_*),$  
\begin{equation}
\label{eq:sumpc6}
\begin{split}
\gamma_{\alpha}(\overline\alpha,t)=\left[\sum_{i\in\mathcal{I}_{max}}\mu(\overline\Omega_i)+(1-\lambda\eta(t)M_0)^{\frac{1}{\lambda}}\sum_{i\notin\mathcal{I}_{max}}(1-\lambda\eta(t)h_i)^{-\frac{1}{\lambda}}\mu(\Omega_i)\right]^{-1}.
\end{split}
\end{equation}
Since $1-\lambda\eta(t)u_0^\prime(\alpha)>0$ for all $\eta\in[0,\eta_*)$ and $\alpha\in[0,1],$ (\ref{eq:sumpc6}) implies
%\begin{equation}
%\label{eq:good}
%\begin{split}
%\lim_{\eta\uparrow\eta_*}\sum_{i\notin\mathcal{I}_{max}}(1-\lambda\eta h_i)^{-\frac{1}{\lambda}}\mu(\Omega_i)=C\in\mathbb{R}^+
%\end{split}
%\end{equation}
\begin{equation}
\label{eq:sumpc7}
\begin{split}
\lim_{t\uparrow t_*}{\gamma_{\alpha}(\overline\alpha,t)}=\left(\sum_{i\in\mathcal{I}_{max}}\mu(\overline\Omega_i)\right)^{-1}>0
\end{split}
\end{equation}
for some $t_*>0.$ However, (\ref{eq:sum}), (\ref{eq:etaivp}) and (\ref{eq:pcdata}) give
\begin{equation}
\label{eq:time2}
\begin{split}
dt=\left(1-\lambda\eta(t)\sum_{i=1}^nh_i\chi_i(\alpha)\right)^{-2}\gamma_{\alpha}(\alpha,t)^{^{-2\lambda}}d\eta
\end{split}
\end{equation}
and so, for $\eta_*-\eta>0$ small, (\ref{eq:sumpc2}), (\ref{eq:sumpc5}) and the above observation on $1-\lambda\eta(t)u_0^\prime(\alpha)$ yield, after integration, $t_*-t\sim C\int_\eta^{\eta_*}{(1-\lambda M_0\sigma)^{-2}d\sigma}.$ Consequently, $t_*=+\infty.$ Finally, (\ref{eq:maxp})i), (\ref{eq:alt}) and (\ref{eq:sumpc7}) yield $$\lim_{t\to+\infty}\int_0^t{M(s)\,ds}=-\ln\left(\sum_{i\in\mathcal{I}_{max}}\mu(\overline\Omega_i)\right)>0.$$ If $\alpha=\widetilde\alpha\in\Omega_i$ for some index $i\notin\mathcal{I}_{max},$ so that $1-\lambda\eta(t)u_0'(\widetilde\alpha)=1-\lambda\eta(t)\tilde h$ for $\tilde h<M_0,$ then (\ref{eq:sumpc2}) implies $\gamma_{\alpha}(\widetilde\alpha,t)\sim C(1-\lambda\eta(t)M_0)^{\frac{1}{\lambda}}\to0$ as $t\to+\infty.$ Thus, by (\ref{eq:alt}), we obtain
\begin{equation*}
\begin{split}
\lim_{t\to+\infty}\int_0^t{u_x(\gamma(\widetilde\alpha,s),s)\,ds}=-\infty.
\end{split}
\end{equation*}
We refer to appendix A for the case $\lambda=0.$
\vspace{0.02in}

\textbf{\textsl{\textit{$L^p$ regularity for $p\in[1,+\infty]$ and $\lambda\in(-\infty,0).$}}}
\vspace{0.02in}

Suppose $\lambda\in(-\infty,0)$ so that $\eta_*=\frac{1}{\lambda m_0}.$ We now write
\begin{equation}
\label{eq:sumpc32}
\begin{split}
1-\lambda\eta(t)\sum_{i=1}^nh_i\chi_i(\alpha)=1-\lambda\eta(t)\left(\sum_{i\in\mathcal{I}_{min}}m_0\chi_i(\alpha)+\sum_{i\notin\mathcal{I}_{min}}h_i\chi_i(\alpha)\right)
\end{split}
\end{equation}
and
\begin{equation}
\label{eq:sumpc52}
\begin{split}
\sum_{i=1}^n(1-\lambda\eta(t)h_i)^{\frac{1}{\left|\lambda\right|}}\mu(\Omega_i)=&\sum_{i\in\mathcal{I}_{min}}(1-\lambda\eta(t)m_0)^{\frac{1}{\left|\lambda\right|}}\mu(\underline\Omega_i)
\\
+&\sum_{i\notin\mathcal{I}_{min}}(1-\lambda\eta(t)h_i)^{\frac{1}{\left|\lambda\right|}}\mu(\Omega_i).
\end{split}
\end{equation}
Choose $\underline\alpha\in\underline\Omega_i$ for some $i\in\mathcal{I}_{min}$ and substitute into (\ref{eq:sumpc32}) to obtain
\begin{equation}
\label{eq:sumpc42}
\begin{split}
1-\lambda\eta(t)\sum_{i=1}^nh_i\chi_i(\underline\alpha)=1-\lambda\eta(t)m_0.
\end{split}
\end{equation}
Using (\ref{eq:sumpc52}) and (\ref{eq:sumpc42}) with (\ref{eq:sumpc2}) gives 
\begin{equation}
\label{eq:sumpc62}
\begin{split}
\gamma_{\alpha}(\underline\alpha,t)=\left[\sum_{i\in\mathcal{I}_{min}}\mu(\underline\Omega_i)+\frac{\sum_{i\notin\mathcal{I}_{min}}(1-\lambda\eta(t)h_i)^{\frac{1}{\left|\lambda\right|}}\mu(\Omega_i)}{(1-\lambda\eta(t)m_0)^{\frac{1}{\left|\lambda\right|}}}\right]^{-1}
\end{split}
\end{equation}
for $\eta\in[0,\eta_*).$ Since $1-\lambda\eta(t)u_0^\prime(\alpha)>0$ for $\eta\in[0,\eta_*)$, $\alpha\in[0,1]$ and $\lambda<0,$ we have that $\lim_{t\uparrow t_*}\gamma_{\alpha}(\underline\alpha,t)=0$ or, equivalently by (\ref{eq:maxp})ii) and (\ref{eq:alt}),
\begin{equation*}
\begin{split}
\lim_{t\to t_*}\int_0^t{m(s)ds}=-\infty.
\end{split}
\end{equation*}
The blow-up time $t_*>0$ is now finite. Indeed, (\ref{eq:sumpc2}), (\ref{eq:time2}) and (\ref{eq:sumpc52}) yield the estimate $dt\sim\left(\sum_{i\notin\mathcal{I}_{min}}(1-\lambda\eta(t)h_i)^{\frac{1}{\left|\lambda\right|}}\mu(\Omega_i)\right)^{2\lambda}d\eta$ for $\eta_*-\eta>0$ small and $\lambda<0.$ Since $h_i>m_0$ for any $i\notin\mathcal{I}_{min},$ integrating the latter implies a finite $t_*>0.$ Now, if $\alpha=\alpha^\prime\in\Omega_i$ for some $i\notin\mathcal{I}_{min},$ then $u_0^\prime(\alpha^\prime)=h^\prime$ for $h^\prime>m_0.$ Following the argument in the $\lambda>0$ case yields $$\gamma_\alpha(\alpha^\prime,t)=\left(\sum_{i\notin\mathcal{I}_{min}}(1-\lambda\eta(t)h_i)^{\frac{1}{\left|\lambda\right|}}\mu(\Omega_i)\right)^{-1}(1-\lambda\eta(t)h^\prime)^{\frac{1}{\left|\lambda\right|}},$$ consequently $\lim_{t\uparrow t_*}\gamma_\alpha(\alpha^\prime,t)=C\in\mathbb{R}^+$ and so, by (\ref{eq:alt}), $\int_0^t{u_x(\gamma,s)\,ds}$ remains finite as $t\uparrow t_*$ for every $\alpha^\prime\neq\underline\alpha$ and $\lambda\in(-\infty,0).$
%\begin{equation*}
%\begin{split}
%&\bigg|\lim_{t\uparrow t_*}\int_0^t{u_x(x,s)\circ\gamma(\alpha^\prime,s)\,ds}\bigg|
%\\
%&\,\,\,\,\,\,\,\,\,\,\,\,\,\,\,\,\,\,\,=\bigg|\frac{1}{\left|\lambda\right|}\ln\left(1-\frac{h^\prime}{m_0}\right)-\ln\left(\sum_{i\notin\mathcal{I}_{min}}\left(1-\frac{h_i}{m_0}\right)^{\frac{1}{\left|\lambda\right|}}\mu(\Omega_i)\right)\bigg|<+\infty
%\end{split}
%\end{equation*}

Lastly, we look at $L^p$ regularity of $u_x$ for $p\in[1,+\infty)$ and $\lambda\in(-\infty,0).$ From (\ref{eq:sum}) and (\ref{eq:mainsolu}), 
\begin{equation*}
\begin{split}
\left|u_x(\gamma(\alpha,t),t)\right|^p\gamma_\alpha(\alpha,t)=\frac{\mathcal{K}_0(\alpha,t)|\mathcal{J}(\alpha,t)^{-1}-\bar{\mathcal{K}}_0(t)^{-1}\bar{\mathcal{K}}_1(t)|^p}{\left|\lambda\eta(t)\right|^p\bar{\mathcal{K}}_0(t)^{2\lambda p+1}}
\end{split}
\end{equation*}
for $t\in[0,t_*)$ and $p\in\mathbb{R}.$ Then, integrating in $\alpha$ and using (\ref{eq:tone}) gives
\begin{equation*}
\begin{split}
&\left\|u_x(x,t)\right\|_p^p=\frac{1}{\left|\lambda\eta(t)\right|^p}\left(\sum_{i=1}^n(1-\lambda\eta(t) h_i)^{-\frac{1}{\lambda}}\mu(\Omega_i)\right)^{-(2\lambda p+1)}
\\
&\sum_{j=1}^n\left\{(1-\lambda\eta(t)h_j)^{-\frac{1}{\lambda}}\bigg|(1-\lambda\eta(t)h_j)^{-1}-\frac{\sum_{i=1}^n(1-\lambda\eta(t)h_i)^{-1-\frac{1}{\lambda}}\mu(\Omega_i)}{\sum_{i=1}^n(1-\lambda\eta(t)h_i)^{-\frac{1}{\lambda}}\mu(\Omega_i)}\bigg|^p\mu(\Omega_j)\right\}
\end{split}
\end{equation*}
for $p\in[1,+\infty).$ Splitting each sum above into the indexes $i,j\in\mathcal{I}_{min}$ and $i,j\notin\mathcal{I}_{min},$ we obtain, for $\eta_*-\eta>0$ small,
\begin{equation*}
\begin{split}
&\left\|u_x(x,t)\right\|_p^p\sim C\mathcal{J}(\underline\alpha,t)^{-\frac{1}{\lambda}}\bigg|\mathcal{J}(\underline\alpha,t)^{-1}-C\left(\mathcal{J}(\underline\alpha,t)^{-1-\frac{1}{\lambda}}+C\right)\bigg|^p
\\
&\,\,\,\,\,\,\,\,+C\sum_{j\notin\mathcal{I}_{min}}\left\{(1-\lambda\eta h_j)^{-\frac{1}{\lambda}}\bigg|(1-\lambda\eta h_j)^{-1}-C\left(\mathcal{J}(\underline\alpha,t)^{-1-\frac{1}{\lambda}}+C\right)\bigg|^p\mu(\Omega_j)\right\}
\end{split}
\end{equation*}
where $\lambda\in(-\infty,0),$ $\mathcal{J}(\underline\alpha,t)=1-\lambda\eta(t)m_0$ and the constant $C>0$ may now also depend on $p\in[1,+\infty).$ Suppose $\lambda\in[-1,0),$ then $-1-\frac{1}{\lambda}\geq0$ and the above implies 
\begin{equation}
\label{eq:la1}
\left\|u_x(x,t)\right\|_p^p\sim C\mathcal{J}(\underline\alpha,t)^{-\left(p+\frac{1}{\lambda}\right)}+g(t)
\end{equation}
for $g(t)$ a bounded function on $[0,t_*)$ with finite, non-negative limit as $t\uparrow t_*.$ On the other hand, if $\lambda\in(-\infty,-1)$ then $-1-\frac{1}{\lambda}<0$ and 
\begin{equation}
\label{eq:la2}
\left\|u_x(x,t)\right\|_p^p\sim C\mathcal{J}(\underline\alpha,t)^{-\left(p+\frac{1}{\lambda}\right)}
\end{equation}
holds instead. The last part of the Theorem follows from (\ref{eq:la1}) and (\ref{eq:la2}) as $t\uparrow t_*.$ See \S \ref{subsec:pl} for examples.\qquad\end{proof}

\subsubsection{\textbf{Piecewise constant $u_0''(x)$}}\hfill
\label{subsubsec:pl}

When $u_0^{\prime\prime}(\alpha)\in PC_{\mathbb{R}}(0,1),$ the behaviour of solutions, in particular for $\lambda\in(1/2,1],$ can be rather different than the one described in theorems \ref{thm:theoremtwogen} and \ref{thm:pc}. Below, we consider a particular choice of data $u_0$ with a finite jump discontinuity in $u_0^{\prime\prime}$ at the point $\overline\alpha$ where $u_0^\prime$ attains its maximum $M_0.$ We find that the solution undergoes a two-sided blow-up in finite-time for $\lambda\in(1/2,+\infty).$ In particular, this signifies the formation of singularities in stagnation point-form solutions to the 2D incompressible Euler equations ($\lambda=1$) (\cite{Childress}, \cite{Weyl1}, \cite{Saxton1}). For $\lambda\in(-\infty,0),$ we find that a one-sided blow-up occurs at a finite number of locations in the domain.

Let
\begin{equation}
\label{eq:pieceu0}
u_0(\alpha)=
\begin{cases}
2\alpha^2-\alpha,\,\,\,\,\,\,&\alpha\in[0,1/2],
\\
-2\alpha^2+3\alpha-1,\,\,\,\,\,&\alpha\in(1/2,1]
\end{cases}
\end{equation}
so that $M_0=1$ and $m_0=-1$ occur at $\overline\alpha=1/2$ and $\underline\alpha=\{0,1\}$ respectively. Then $\mathcal{J}(\overline\alpha,t)=1-\lambda\eta(t),$\, $\mathcal{J}(\underline\alpha,t)=1+\lambda\eta(t)$ and $\eta_*=\frac{1}{\left|\lambda\right|}$ for $\lambda\neq0.$
%\begin{center}
%\begin{figure}[!ht]
%\includegraphics[scale=0.40]{piecewiseone.png} 
%\includegraphics[scale=0.40]{piecewisetwo.png} 
%\caption{$u_0(\alpha)$ and $u_0^\prime(\alpha)$ as given by (\ref{eq:pieceu0}).}
%\label{fig:piecewise}
%\end{figure}
%\end{center}
%\begin{figure}[ht]
%\centering
%\subfigure{
%\includegraphics[height=40mm,width=60mm]{piecewiseone.png}
%\label{fig:piecewiseone}
%}
%\subfigure{
%\includegraphics[height=40mm,width=60mm]{piecewisetwo.png}
%\label{fig:piecewisetwo}
%}
%\label{fig:piecewisedata}
%\caption{$u_0(\alpha)$ and $u_0^\prime(\alpha)$ as given by (\ref{eq:pieceu0}).}
%\end{figure}
Using (\ref{eq:pieceu0}), we find 
\begin{equation}
\label{eq:firstintp}
\bar{\mathcal{K}}_0(t)=
\begin{cases}
\frac{\mathcal{J}(\overline\alpha,t)^{1-\frac{1}{\lambda}}-\mathcal{J}(\underline\alpha,t)^{1-\frac{1}{\lambda}}}{2(1-\lambda)\eta(t)},\,\,\,\,\,\,\,&\lambda\in\mathbb{R}\backslash\{0,1\},
\\
\frac{1}{2\eta(t)}\ln\left(\frac{\eta_*+\eta(t)}{\eta_*-\eta(t)}\right),\,\,\,\,\,&\lambda=1
\end{cases}
\end{equation}
and
\begin{equation}
\label{eq:secondintp}
\begin{split}
\bar{\mathcal{K}}_1(t)=\frac{\mathcal{J}(\overline\alpha,t)^{-\frac{1}{\lambda}}-\mathcal{J}(\underline\alpha,t)^{-\frac{1}{\lambda}}}{2\eta(t)},\,\,\,\,\,\,\,\,\,\,\lambda\neq0.
\end{split}
\end{equation}
If $\lambda\in(-\infty,0),$ a one-sided blow-up, $m(t)\to-\infty,$ follows trivially from (\ref{eq:mainsolu}), (\ref{eq:firstintp})i) and (\ref{eq:secondintp}) as $t$ approaches a finite $t_*>0$ whose existence is guaranteed, in the limit as $\eta\uparrow\eta_*,$ by (\ref{eq:etaivp}) and (\ref{eq:firstintp})i). On the other hand, if $\lambda\in(0,+\infty)$ and $\eta_*-\eta>0$ is small,
\begin{equation}
\label{eq:firstintp2}
\bar{\mathcal{K}}_0(t)\sim
\begin{cases}
\frac{\lambda}{2(1-\lambda)}\mathcal{J}(\overline\alpha,t)^{1-\frac{1}{\lambda}},\,\,\,\,\,\,&\lambda\in(0,1),
\\
\frac{\lambda}{2^{\frac{1}{\lambda}}(\lambda-1)},\,\,\,\,\,&\lambda\in(1,+\infty)
\\
-C\log(\eta_*-\eta(t)),\,\,\,\,\,\,&\lambda=1
\end{cases}
\end{equation}
and
\begin{equation}
\label{eq:secondintp2}
\begin{split}
\bar{\mathcal{K}}_1(t)\sim\frac{\lambda}{2\mathcal{J}(\overline\alpha,t)^{\frac{1}{\lambda}}}.
\end{split}
\end{equation}
For $\alpha=\overline\alpha,$ the above estimates and (\ref{eq:mainsolu}) imply that, as $\eta\uparrow\eta_*,$
\begin{equation*}
M(t)=u_x(\gamma(\overline\alpha,t),t)\to
\begin{cases}
0,\,\,\,\,\,\,\,\,\,\,\,&\lambda\in(0,1/2),
\\
+\infty,\,\,\,\,\,\,\,\,\,\,\,&\lambda\in(1/2,+\infty).
\end{cases}
\end{equation*}
Furthermore, for $\alpha\neq\overline\alpha,$
\begin{equation*}
u_x(\gamma(\alpha,t),t)\to
\begin{cases}
0,\,\,\,\,\,\,\,\,\,\,\,&\lambda\in(0,1/2),
\\
-\infty,\,\,\,\,\,\,\,\,\,\,\,&\lambda\in(1/2,+\infty).
\end{cases}
\end{equation*}
For the threshold parameter $\lambda=1/2,$\, $u_x(\gamma,t)\to-1$ as $\eta\uparrow2$ for $\alpha\notin\{\overline\alpha,\underline\alpha\},$ whereas, $M(t)=u_x(\gamma(\overline\alpha,t),t)\equiv1$ and $m(t)=u_x(\gamma(\underline\alpha,t),t)\equiv-1.$ Finally, from (\ref{eq:etaivp}) and (\ref{eq:firstintp2})
\begin{equation*}
t_*-t\sim
\begin{cases}
C\int_{\eta}^{\eta_*}{(1-\lambda\mu)^{2(\lambda-1)}d\mu},\,\,\,\,\,\,\,\,\,\,&\lambda\in(0,1),
\\
C(\eta_*-\eta)(2-2\log(\eta_*-\eta)+\ln^2(\eta_*-\eta)),\,\,\,\,\,\,&\lambda=1,
\\
C(\eta_*-\eta),\,\,\,\,\,\,\,\,\,&\lambda\in(1,+\infty),
\end{cases}
\end{equation*}
and so $t_*=+\infty$ for $\lambda\in(0,1/2]$ but $0<t_*<+\infty$ when $\lambda\in(1/2,+\infty).$ 

\begin{remark}
By following an argument analogous to that of \S\ref{subsubsec:pc} for piecewise constant $u_0'$, it can be shown that the results from the above example extend to arbitrary data with piecewise constant $u_0''$. In fact, if instead of having $u_0'$ piecewise linear in $[0,1]$,  the above results extend to continuous $u_0'$ behaving linearly only in a small neighbourhood of $\overline\alpha_i$ for $\lambda>0$. Similarly for parameters $\lambda<0$ and $u_0'$ locally linear near $\underline\alpha_j$. Further details on the generalization of this result, as well as new results for data with arbitrary curvature near the locations in question, will be presented in a forthcoming paper.
\end{remark}
\begin{remark}
We recall that if $\lambda\in[1/2,1)$ and $u_0^{\prime\prime\prime}(x)\in L_{\mathbb{R}}^{\frac{1}{2(1-\lambda)}}(0,1),$ then $u$ persists globally in time (\cite{Okamoto2}). This result does not contradict the above blow-up example. Indeed, if $u_0^{\prime\prime\prime}\in L_{\mathbb{R}}^{\frac{1}{2(1-\lambda)}}$ for $\lambda\in[1/2,1),$ then $u_0''$ is an absolutely continuous function on $[0,1],$ and hence continuous. However, in the case just considered, $u_0''$ is, of course, not continuous.
\end{remark}
\begin{remark}
From Theorem \ref{thm:theoremtwogen}, which examines a family of smooth $u_0\in C^\infty_{\mathbb{R}},$ notice that $\lambda_*=1$ acts as the threshold parameter between solutions that vanish at $t=+\infty$ for $\lambda\in(0,\lambda_*)$ and those which blow-up in finite-time when $\lambda\in(\lambda_*,+\infty),$ while for $\lambda_*=1,\, u_x$ converges to a non-trivial steady-state as $t\to+\infty.$ In the example above with $u_0''\in PC_{\mathbb{R}},$ we have the corresponding behavior at $\lambda_*=1/2$ instead. Interestingly enough, when $\lambda=1/2$ or $\lambda=1,$ equation (\ref{eq:nonhomo}) i), iii) models stagnation point-form solutions to the 3D or 2D incompressible Euler equations respectively. An interesting question would be to examine the effect on blow-up of cusps in the graph of $u'_0$, for $\lambda=1/2.$ 
\end{remark}

\section{Examples}
\label{sec:examples}
Examples 1-4 in \S \ref{subsec:smooth} have $\lambda\in\{3,-5/2,1,-1/2\}$, respectively, and are instances of theorems \ref{thm:theoremtwogen}, \ref{thm:theoremfour} and \ref{thm:theoremthree}. In these cases, we will use formula (\ref{eq:finalsolu22}) and the \textsc{Mathematica} software to aid in the closed-form evaluation of some of the integrals and the generation of plots. Furthermore, examples 5 and 6 in \S \ref{subsec:pl} are representatives of Theorem \ref{thm:pc} for $\lambda=1$ and $-2.$ For simplicity, details of the computations in most examples are omitted. Finally, because solving the IVP (\ref{eq:etaivp}) is generally a difficult task, the plots in this section (with the exception of figure \ref{fig:blowfig2}$A$) will depict $u_x(\gamma(\alpha,t),t)$ for fixed $\alpha\in[0,1]$ against the variable $\eta(t)$, rather than $t$. Figure \ref{fig:blowfig2}$A$ will however illustrate $u(x,t)$  versus $x\in[0,1]$ for fixed $t\in[0,t_*)$.

\subsection{Examples for theorems \ref{thm:theoremtwogen}, \ref{thm:theoremthree} and \ref{thm:theoremfour}}
\label{subsec:smooth}
For examples 1-3, let $u_0(\alpha)=-\frac{1}{4\pi}\cos(4\pi\alpha).$ Then $u_0^\prime(\alpha)=\text{sin}(4\pi\alpha)$ attains its maximum $M_0=1$ at $\overline\alpha_i=\{1/8,5/8\},$ while $m_0=-1$ occurs at $\underline\alpha_j=\{3/8,7/8\}.$

\vspace{0.05in}

\textbf{Example 1.\, Two-sided Blow-up for $\lambda=3.$} Let $\lambda=3,$ then $\eta_*=\frac{1}{\lambda M_0}=1/3$ and for the time-dependent integrals we find that
\begin{equation}
\label{eq:good1}
\begin{split}
\bar{\mathcal{K}}_0(t)={}_2F_1\left[\frac{1}{6},\frac{2}{3};1;9\,\eta(t)^2\right]\to\frac{\Gamma\left(\frac{1}{6}\right)}{\Gamma\left(\frac{1}{3}\right)\,\Gamma\left(\frac{5}{6}\right)}\sim1.84
\end{split}
\end{equation}
and
\begin{equation}
\label{eq:good2}
\begin{split}
\int_0^1{\frac{u_0^\prime\,d\alpha}{(1-3\eta(t)u_0^\prime)^{\frac{4}{3}}}}=2\eta(t)\,{}_2F_1\left[\frac{7}{6},\frac{5}{3};2;9\,\eta(t)^2\right]\to+\infty
\end{split}
\end{equation}
as $\eta\uparrow1/3.$ Using (\ref{eq:good1}) and (\ref{eq:good2}) on (\ref{eq:finalsolu22}), and taking the limit as $\eta\uparrow1/3,$ we see that $M(t)=u_x(\gamma(\overline\alpha_i,t),t)\to+\infty$ whereas, for $\alpha\neq\overline\alpha_i,$ $u_x(\gamma(\alpha,t),t)\to-\infty.$ The blow-up time $t_*\sim0.54$ is obtained from (\ref{eq:etaivp}) and (\ref{eq:good1}). See figure \ref{fig:blowfig}$A$.

\vspace{0.05in}

\textbf{Example 2.\, Two-sided Blow-up for $\lambda=-5/2.$} For $\lambda=-5/2,$ $\eta_*=\frac{1}{\lambda m_0}=2/5$. Then, we now have that
\begin{equation}
\label{eq:good3}
\begin{split}
\bar{\mathcal{K}}_0(t)={}_2F_1\left[-\frac{1}{5},\frac{3}{10};1;\frac{25}{4}\eta(t)^2\right]\to\frac{\Gamma\left(\frac{9}{10}\right)}{\Gamma\left(\frac{7}{10}\right)\,\Gamma\left(\frac{6}{5}\right)}\sim0.9
\end{split}
\end{equation}
and
\begin{equation}
\label{eq:good4}
\begin{split}
\int_0^1{\frac{u_0^\prime\,d\alpha}{(1+5\eta(t)u_0^\prime/2)^{\frac{3}{5}}}}=-\frac{3}{4}\eta(t)\,{}_2F_1\left[\frac{4}{5},\frac{13}{10};2;\frac{25}{4}\eta(t)^2\right]\to-\infty
\end{split}
\end{equation}
as $\eta\uparrow2/5.$ Plugging the above formulas into (\ref{eq:finalsolu22}) and letting $\eta\uparrow2/5,$ we find that $m(t)=u_x(\gamma(\underline\alpha_j,t),t)\to-\infty$ while, for $\alpha\neq\underline\alpha_j,$ $u_x(\gamma(\alpha,t),t)\to+\infty.$ The blow-up time $t_*\sim0.46$ is obtained from (\ref{eq:etaivp}) and (\ref{eq:good3}). See figure \ref{fig:blowfig}$B$. 

\vspace{0.05in}

\textbf{Example 3.\, Global Existence for $\lambda=1.$} Let $\lambda=1,$ then 
\begin{equation}
\label{eq:ge}
\begin{split}
\bar{\mathcal{K}}_0(t)=\frac{1}{\sqrt{1-\eta(t)^2}}\,\,\,\,\,\,\,\,\text{and}\,\,\,\,\,\,\,\,\int_0^1{\frac{u_0^\prime\,d\alpha}{(1-\eta(t)u_0^\prime)^2}}=\frac{\eta(t)}{(1-\eta(t)^2)^{\frac{3}{2}}}
\end{split}
\end{equation}
both diverge to $+\infty$ as $\eta\uparrow\eta_*=1.$ Also, (\ref{eq:ge})i) and (\ref{eq:etaivp}) imply $\eta(t)=\text{tanh}\,t,$ which we use on (\ref{eq:finalsolu22}), along with (\ref{eq:ge}), to obtain $$u_x(\gamma(\alpha,t),t)=\frac{\text{tanh}\,t-\text{sin}(4\pi\alpha)}{\text{tanh}\,t\,\text{sin}(4\pi\alpha)-1}.$$ Then, $M(t)=u_x(\gamma(\overline\alpha_i,t),t)\equiv1$ and $m(t)=u_x(\gamma(\underline\alpha_j,t),t)\equiv-1$ while, for $\alpha\notin\{\overline\alpha_i,\underline\alpha_j\}$, $u_x(\gamma(\alpha,t),t)\to-1$ as $\eta\uparrow 1$. Finally, $\eta(t)=\text{tanh}\,t$ yields $t_*=\lim_{\eta\uparrow 1}{\text{arctanh}\,\eta}=+\infty.$ It is also easy to see from the formulas in \S \ref{sec:sol} and (\ref{eq:ge})i) that $I(t)\equiv-1$ for $I(t)$ the nonlocal term (\ref{eq:nonhomo})iii). See figure \ref{fig:blowfig}$C$.

\vspace{0.05in}

\textbf{Example 4.\, One-sided Blow-up for $\lambda=-1/2.$} For $\lambda=-1/2$ (HS equation), let $u_0=\cos(2\pi\alpha)+2\cos(4\pi\alpha).$ Then, the least value $m_0<0$ of $u_0'$ and the location $\underline\alpha\in[0,1]$ where it occurs are given, approximately, by $m_0\sim-30$ and $\underline\alpha\sim0.13,$ respectively. Also, $\eta_*=-\frac{2}{m_0}\sim0.067$ and 
\begin{equation}
\label{eq:ba0}
\begin{split}
\bar{\mathcal{K}}_0(t)=1+\frac{17\pi^2\eta(t)^2}{2},\,\,\,\,\,\,\,\,\,\,\,\,\,\int_0^1{u_0^{\prime}(\alpha)\left(1+\eta(t)\frac{u_0^{\prime}(\alpha)}{2}\right)\,d\alpha}=17\pi^2\eta(t).
\end{split}
\end{equation}	
Then, (\ref{eq:etaivp}) and (\ref{eq:ba0})i) give $\eta(t)=\sqrt{\frac{2}{17\pi^2}}\,\tan\left(\pi\sqrt{\frac{17}{2}}\,t\right)$. Using these results on (\ref{eq:finalsolu22}) yields, after simplification,
\begin{equation*}
\begin{split}
u_x(\gamma(\alpha,t),t)=\frac{\pi\left(2\sin(2\pi\alpha)+8\sin(4\pi\alpha)+\sqrt{34}\tan\left(\pi\sqrt{\frac{17}{2}}t\right)\right)}{\sqrt{\frac{2}{17}}\,\tan\left(\pi\sqrt{\frac{17}{2}}\,t\right)\left(\sin(2\pi\alpha)+4\sin(4\pi\alpha)\right)-1}
\end{split}
\end{equation*}	
for $0\leq\eta<\eta_*.$ We find that\, $m(t)=u_x(\gamma(\underline\alpha,t),t)\to-\infty$\, as $\eta\uparrow\eta_*,$ whereas, for $\alpha\neq\underline\alpha,$\, $u_x(\gamma(\alpha,t),t)$ remains finite. Finally, $t_*=t\left(-2/m_0\right)\sim0.06.$ See figure \ref{fig:blowfig}$D$. 

\begin{center}
\begin{figure}[!ht]
\includegraphics[scale=0.4]{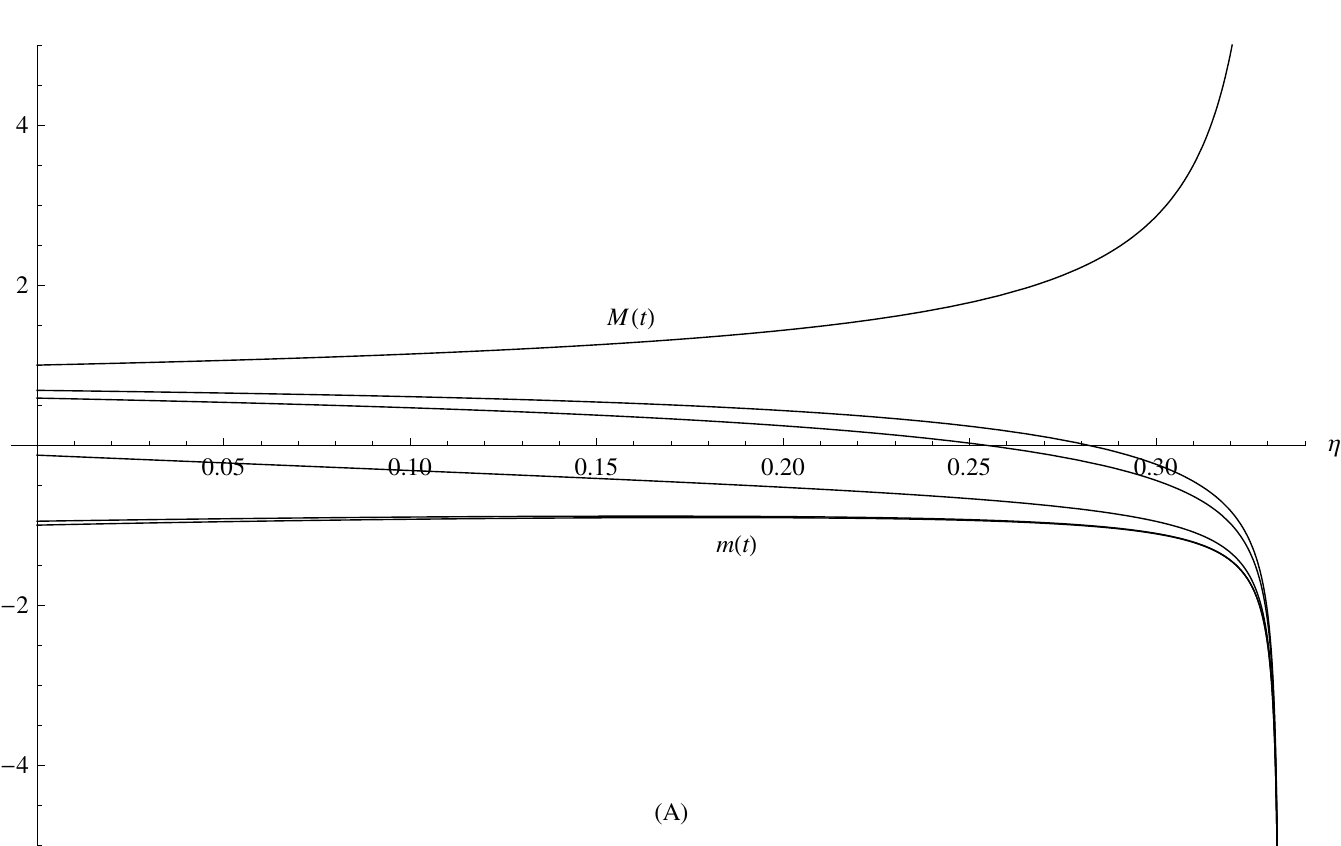} 
\includegraphics[scale=0.4]{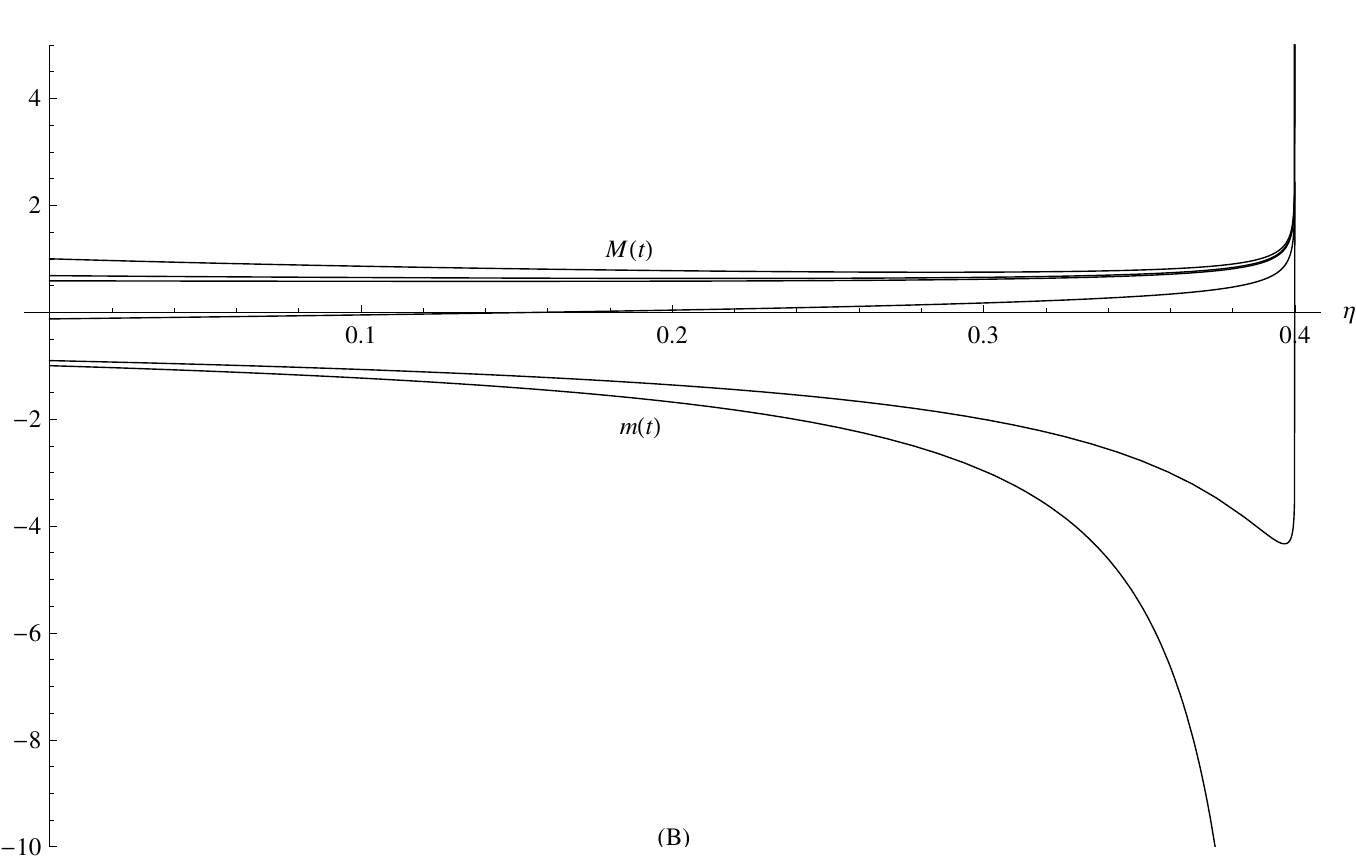} \\ 
\includegraphics[scale=0.4]{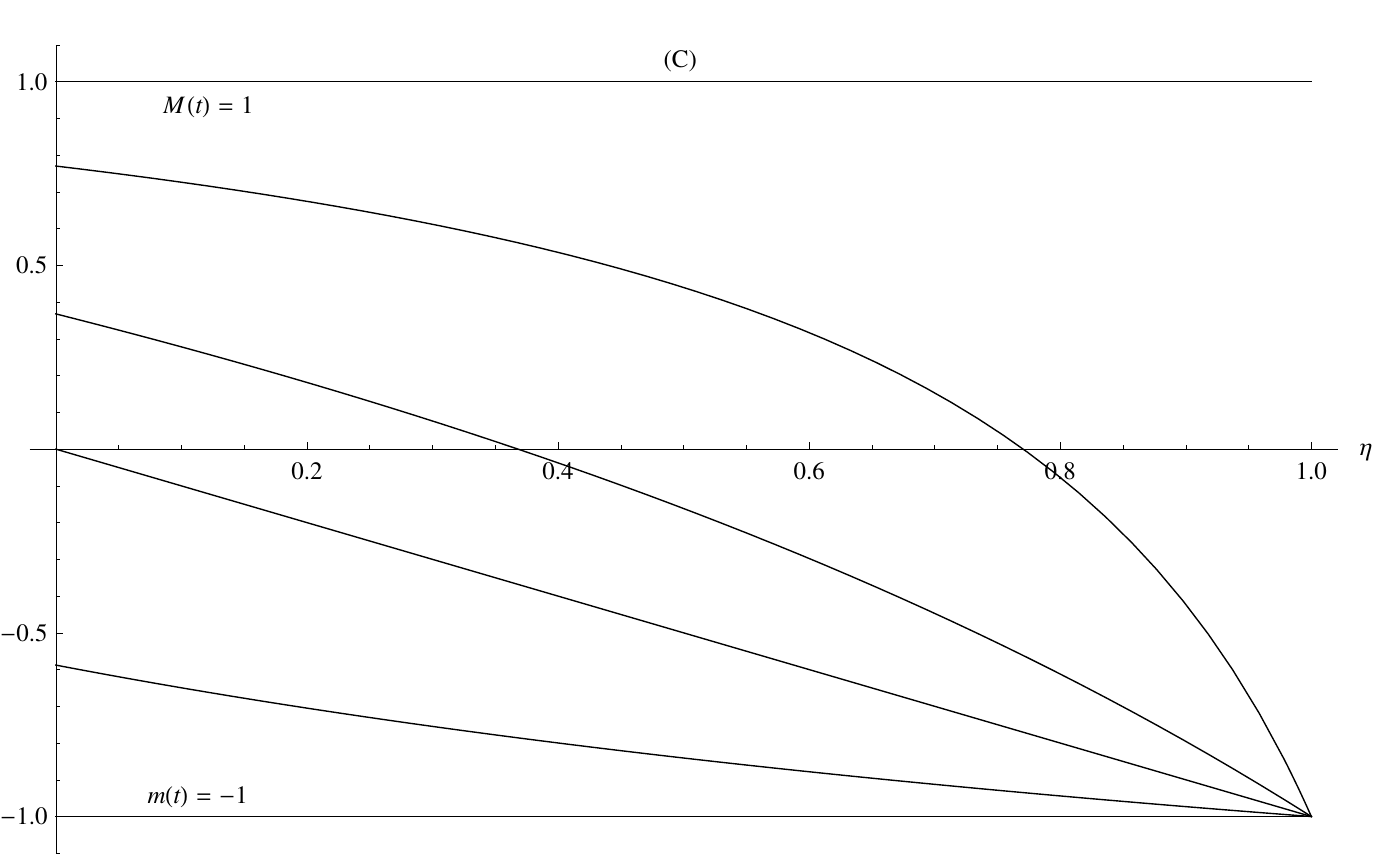}
\includegraphics[scale=0.4]{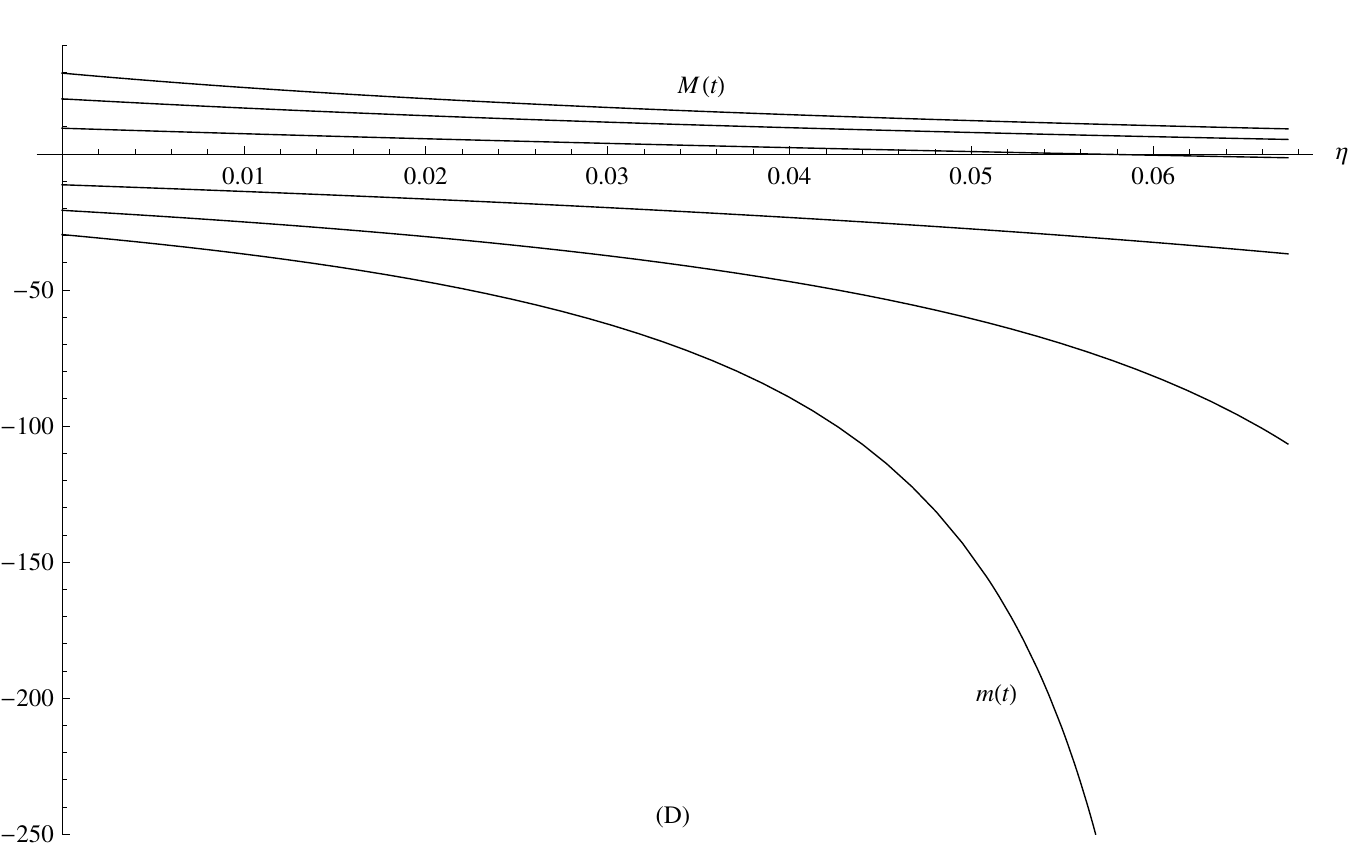} 
\caption{Figures $A$ and $B$ depict two-sided, everywhere blow-up of (\ref{eq:finalsolu22}) for $\lambda=3$ and $-5/2$ (Examples 1 and 2) as $\eta\uparrow 1/3$ and $2/5$, respectively. Figure $C$ (Example 3) represents global existence in time for $\lambda=1$, whereas figure $D$ (Example 4) illustrates one-sided, discrete blow-up for $\lambda=-1/2$ as $\eta\uparrow0.067$.}
\label{fig:blowfig}
\end{figure}
\end{center}

\subsection{Examples for Theorem \ref{thm:pc}}
\label{subsec:pl}
For examples 5 and 6 below, we let  
\begin{equation}
\label{eq:dataplex}
u_0(\alpha)=
\begin{cases}
-\alpha,\,\,\,\,\,\,\,\,\,\,&\,\,\,\,\,\,0\leq\alpha<1/4,
\\
\alpha-1/2,\,\,\,\,\,\,\,\,\,\,\,\,&1/4\leq\alpha<3/4.
\\
1-\alpha,\,\,\,\,\,\,\,\,\,&3/4\leq\alpha\leq1.
\end{cases}
\end{equation}
Then, $M_0=1$ occurs when $\alpha\in[1/4,3/4)$, $m_0=-1$ for $\alpha\in[0,1/4)\cup[3/4,1]$ and $\eta_*=\frac{1}{\left|\lambda\right|}$ for $\lambda\neq0$. Also, notice that (\ref{eq:dataplex}) is odd about the midpoint $\alpha=1/2$ and it vanishes at the end-points (as it should due to periodicity). As a result, uniqueness of solution to (\ref{eq:cha}) implies that $\gamma(0,t)\equiv0$ and $\gamma(1,t)\equiv1$ for as long as $u$ is defined. 
\vspace{0.05in}

\textbf{Example 5.\,Global estimates for $\lambda=1.$}
Using (\ref{eq:dataplex}), we find that $\bar{\mathcal{K}}_0(t)=(1-\eta(t)^2)^{-1}$ for $0\leq\eta<\eta_*=1.$ Then (\ref{eq:sum}) implies $\gamma_{\alpha}(\alpha,t)=\frac{1-\eta(t)^2}{1-\eta(t)u_0'(\alpha)}$, or 
\begin{equation}
\label{eq:chaex1pc}
\gamma(\alpha,t)=
\begin{cases}
(1-\eta(t))\alpha,\,\,\,\,\,\,\,\,\,&\,\,\,\,\,\,0\leq\alpha<1/4,
\\
\alpha+\eta(t)(\alpha-1/2),\,\,\,\,\,\,\,\,\,\,&1/4\leq\alpha<3/4,
\\
\alpha+\eta(t)(1-\alpha),\,\,\,\,\,\,\,\,&3/4\leq\alpha\leq1
\end{cases}
\end{equation}
after integrating and using (\ref{eq:dataplex}) and $\gamma(0,t)\equiv0$. Since $\dot\gamma=u\circ\gamma,$ we have that
\begin{equation}
\label{eq:uex1pc}
u(\gamma(\alpha,t),t)=
\begin{cases}
-\alpha\dot\eta(t),\,\,\,\,\,\,\,\,\,&\,\,\,\,\,\,0\leq\alpha<1/4,
\\
(\alpha-1/2)\dot\eta(t),\,\,\,\,\,\,\,\,\,\,\,&1/4\leq\alpha<3/4
\\
(1-\alpha)\dot\eta(t),\,\,\,\,\,\,\,\,&3/4\leq\alpha\leq1
\end{cases}
\end{equation}
where, by (\ref{eq:etaivp}) and $\bar{\mathcal{K}}_0$ above, $\dot\eta(t)=(1-\eta(t)^2)^2$. Now, (\ref{eq:chaex1pc}) lets us solve for $\alpha=\alpha(x,t)$, the inverse Lagrangian map, as
\begin{equation}
\label{eq:invex1pc}
\alpha(x,t)=
\begin{cases}
\frac{x}{1-\eta(t)},\,\,\,\,\,\,\,\,\,&\,\,\,\,\,\,\,\,\,\,\,\,\,0\leq x<\frac{1-\eta(t)}{4},
\\
\frac{2x+\eta(t)}{2(1+\eta(t))},\,\,\,\,\,\,\,\,\,&\frac{1-\eta(t)}{4}\leq x<\frac{3+\eta(t)}{4},
\\
\frac{x-\eta(t)}{1-\eta(t)},\,\,\,\,\,\,\,\,\,\,&\frac{3+\eta(t)}{4}\leq x\leq1,
\end{cases}
\end{equation}
which we use on (\ref{eq:uex1pc}) to obtain the corresponding Eulerian representation
\begin{equation}
\label{eq:uex1pceul}
u(x,t)=
\begin{cases}
-(1-\eta(t))(1+\eta(t))^2x,\,\,\,\,\,\,\,\,\,&\,\,\,\,\,\,\,\,\,\,\,\,\,0\leq x<\frac{1-\eta(t)}{4},
\\
\frac{1}{2}(1+\eta(t))(1-\eta(t))^2(2x-1),\,\,\,\,\,\,\,\,\,&\frac{1-\eta(t)}{4}\leq x<\frac{3+\eta(t)}{4},
\\
(1-\eta(t))(1+\eta(t))^2(1-x),\,\,\,\,\,\,\,\,\,\,\,&\frac{3+\eta(t)}{4}\leq x\leq1,
\end{cases}
\end{equation}
which in turn yields
\begin{equation}
\label{eq:uxex1pceul}
u_x(x,t)=
\begin{cases}
-(1-\eta(t))(1+\eta(t))^2,\,\,\,\,\,\,\,\,\,&\,\,\,\,\,\,\,\,\,\,\,\,\,0\leq x<\frac{1-\eta(t)}{4},
\\
(1+\eta(t))(1-\eta(t))^2,\,\,\,\,\,\,\,\,\,&\frac{1-\eta(t)}{4}\leq x<\frac{3+\eta(t)}{4},
\\
-(1-\eta(t))(1+\eta(t))^2,\,\,\,\,\,\,\,\,\,\,\,&\frac{3+\eta(t)}{4}\leq x\leq1.
\end{cases}
\end{equation}
Finally, solving the IVP for $\eta$ we obtain $t(\eta)=\frac{1}{2}\left(\text{arctanh}(\eta)+\frac{\eta}{1-\eta^2}\right),$ so that (\ref{eq:assympt}) gives $t_*=\lim_{\eta\uparrow 1}t(\eta)=+\infty$. See figure \ref{fig:blowfig2}$A$ below.

\begin{remark}
The vanishing of the characteristics in example 5 greatly facilitates the computation of an explicit solution formula for $\dot\gamma(\alpha,t)=u(\gamma(\alpha,t),t)$. However, as it is generally the case, $\gamma(0,t)$ may not be identically zero. In that case, integration of (\ref{eq:sum}) now yields
\begin{equation}
\label{eq:meander}
\gamma(\alpha,t)=\gamma(0,t)+\frac{1}{\bar{\mathcal{K}}_0(t)}\int_0^{\alpha}{\frac{dy}{\mathcal{J}(y,t)^{\frac{1}{\lambda}}}},
\end{equation}
which we differentiate in time to obtain
\begin{equation}
\label{eq:meander3}
u(\gamma(\alpha,t),t)=\dot\gamma(0,t)+\frac{d}{dt}\left(\frac{1}{\bar{\mathcal{K}}_0(t)}\int_0^{\alpha}{\mathcal{K}_0(y,t)dy}\right).
\end{equation}
In order to determine the time-dependent function $\dot\gamma(0,t)$, we may use, for instance, the `conservation in mean' condition\footnote[13]{In \cite{Saxton1}, the authors showed that (\ref{eq:meander2}) follows naturally from the study of spatially periodic, stagnation point-form solutions to the $n$ dimensional Euler equations with spatially periodic pressure term.}
\begin{equation}
\label{eq:meander2}
\int_0^1{u_0(x)dx}=\int_0^1{u(x,t)dx}=\int_0^1{u(\gamma(\alpha,t),t)\gamma_\alpha(\alpha,t) d\alpha}.
\end{equation}
Let us assume (\ref{eq:meander2}) holds. Then, multiplying (\ref{eq:meander3}) by the mean-one function $\gamma_{\alpha}$ in (\ref{eq:sum}), integrating in $\alpha$ and using (\ref{eq:meander2}), yields
\begin{equation}
\label{eq:meander4}
\dot\gamma(0,t)=\int_0^1{u_0(\alpha)d\alpha}-\int_0^1{\frac{\mathcal{K}_0(\alpha,t)}{\bar{\mathcal{K}}_0(t)}\frac{d}{dt}\left(\frac{1}{\bar{\mathcal{K}}_0(t)}\int_0^{\alpha}{\mathcal{K}_0(y,t)dy}\right)d\alpha}.
\end{equation}
Omitting the details of the computations, (\ref{eq:meander4}) may, in turn, be written as
\begin{equation}
\label{eq:meander6}
\dot\gamma(0,t)=\int_0^1{u_0(\alpha)d\alpha}+\frac{\bar{\mathcal{K}}_0(t)^{^{-2(1+\lambda)}}}{\lambda\eta(t)}\left(\frac{\bar{\mathcal{K}}_0(t)\bar{\mathcal{K}}_1(t)}{2}-\int_0^1{\mathcal{K}_0(\alpha,t)\int_0^{\alpha}{\mathcal{K}_1(y,t)\,dy}\,\,d\alpha}\right)
\end{equation}
The above and (\ref{eq:meander3}) yield a representation formula for $u(\gamma,t)$.  Integrating (\ref{eq:meander6}) in time and using (\ref{eq:meander}) gives an expression for the characteristics $\gamma$. Finally, we remark that under Dirichlet boundary conditions and/or using initial data $u_0$ which is odd about the midpoint (\cite{Aconstantin1}, \cite{Wunsch1}), a general formula for $u(\gamma,t)$ can be obtained from (\ref{eq:meander3}) by simply setting $\dot\gamma(0,t)\equiv0$.
\end{remark}

\vspace{0.05in}

\textbf{Example 6.\,Finite-time blow-up for $\lambda=-2.$} Using (\ref{eq:dataplex}) and $\lambda=-2$,   
\begin{equation*}
\begin{split}
\bar{\mathcal{K}}_0(t)=\frac{\sqrt{1-2\eta(t)}+\sqrt{1+2\eta(t)}}{2}\,\,\,\,\,\,\,\,\text{and}\,\,\,\,\,\,\,\,\int_0^1{\frac{u_0'(\alpha)\,d\alpha}{\mathcal{J}(\alpha,t)^{1+\frac{1}{\lambda}}}}=\frac{\,d\bar{\mathcal{K}}_0(t)}{d\eta}
\end{split}
\end{equation*}
for $\eta\in[0,\eta_*)$ and $\eta_*=1/2$. Then, (\ref{eq:finalsolu22}) yields 
\begin{equation}
\label{eq:ux6}
u_x(\gamma(\alpha,t),t)=
\begin{cases}
M(t)=\frac{\left(\sqrt{1-2\eta(t)}+\sqrt{1+2\eta(t)}\right)^3}{8(1+2\eta(t))\sqrt{1-2\eta(t)}},\,\,\,\,\,\,\,\,\,&\alpha\in[1/4,3/4),
\\
m(t)=-\frac{\left(\sqrt{1-2\eta(t)}+\sqrt{1+2\eta(t)}\right)^3}{8(1-2\eta(t))\sqrt{1+2\eta(t)}},\,\,\,\,\,\,&\alpha\in[0,1/4)\cup[3/4,1],
\end{cases}
\end{equation}
so that $M(t)\to+\infty$ and $m(t)\to-\infty$ as $\eta\uparrow1/2$. The finite blow-up time $t_*>0$ is obtained from (\ref{eq:etaivp}) and $\bar{\mathcal{K}}_0$ above. We find
$$t(\eta)=\frac{1}{6\eta^3}\left(\eta^2\left(6-4\sqrt{1-4\eta^2}\right)+\sqrt{1-4\eta^2}-1\right),$$
so that $t_*=t(1/2)=2/3$. See figure \ref{fig:blowfig2}$B$ below.
\begin{center}
\begin{figure}[!ht]
\includegraphics[scale=0.27]{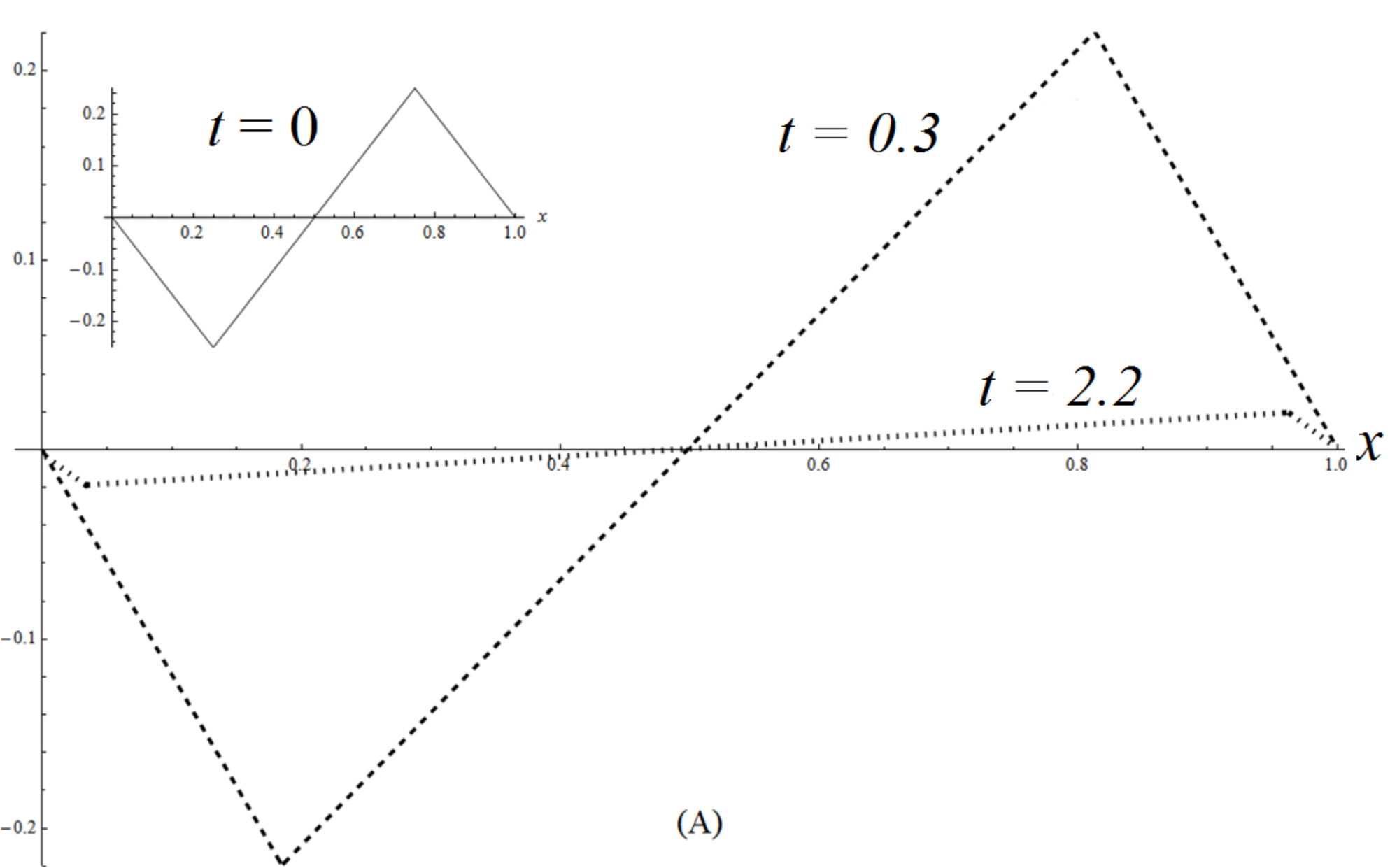}
\includegraphics[scale=0.32]{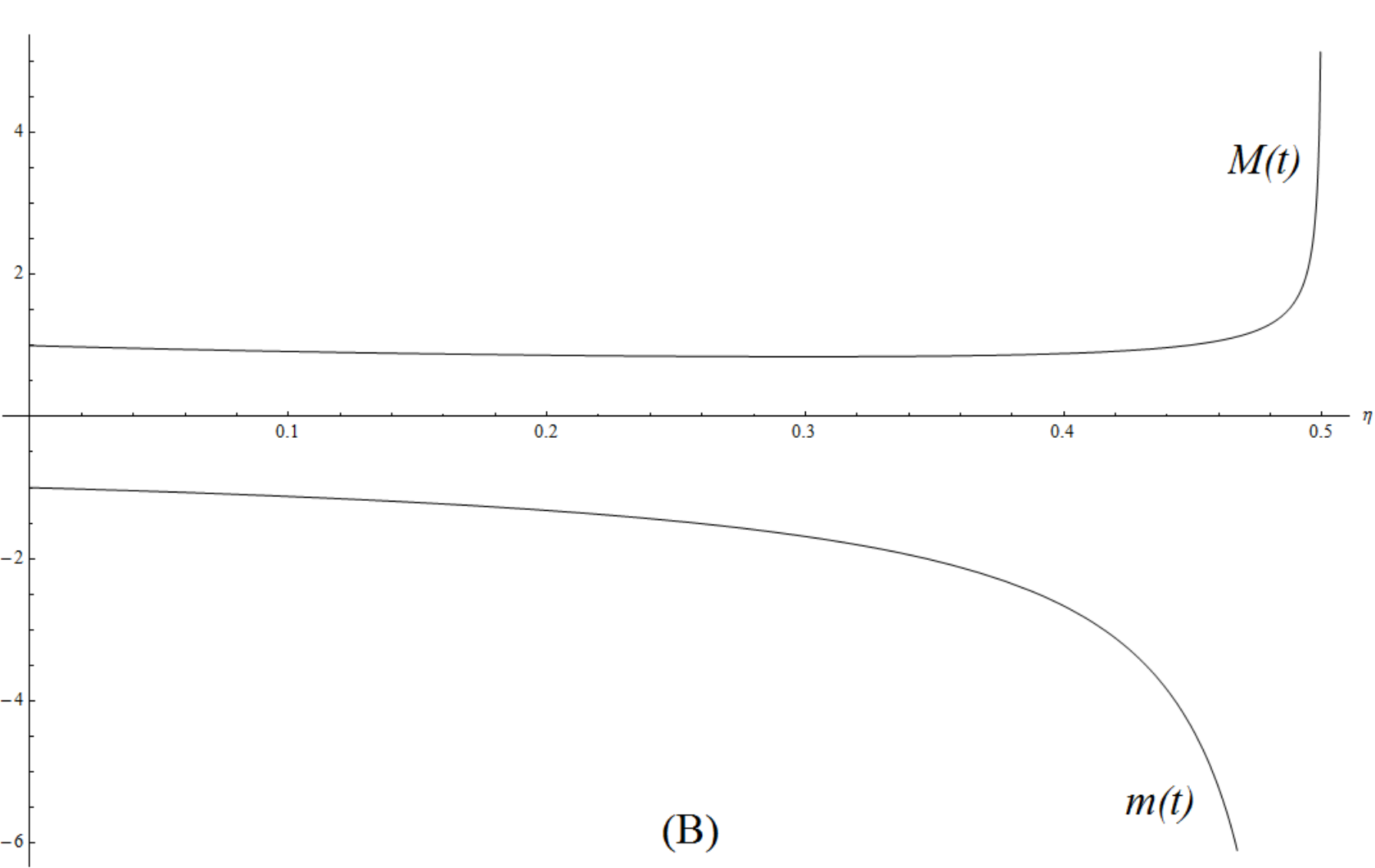} 
\caption{In figure $A$, (\ref{eq:uex1pceul}) vanishes as $t\to+\infty$, while figure $B$ depicts two-sided blow-up of (\ref{eq:ux6}) as $\eta\uparrow\eta_*=1/2.$}
\label{fig:blowfig2}
\end{figure}
\end{center}

\appendix\section{\textit{Global existence for $\lambda=0$ and smooth data.}}  
Setting $\lambda=0$ in (\ref{eq:chain}) gives 
$$(\ln\gamma_\alpha)^{\ddot{}}=-\int_0^1{u_x^2dx}=I(t).$$ 
Then, integrating twice in time and using $\dot\gamma_\alpha=(u_x(\gamma,t))\gamma_\alpha$ and $\gamma_\alpha(\alpha,0)=1,$ yields
\begin{equation}
\label{eq:02}
\begin{split}
\gamma_\alpha(\alpha,t)=e^{tu_0^\prime(\alpha)}e^{\int_0^t{(t-s)I(s)ds}}.
\end{split}
\end{equation}
Since $\gamma_\alpha$ has mean one in $[0,1],$ integrate (\ref{eq:02}) in $\alpha$ to obtain
\begin{equation}
\label{eq:03}
\begin{split}
e^{\int_0^t{(t-s)I(s)ds}}=\left(\int_0^1{e^{tu_0^\prime(\alpha)}d\alpha}\right)^{-1}.
\end{split}
\end{equation}
Combining this with (\ref{eq:02}) gives $\gamma_{\alpha}(\alpha,t)=e^{tu_0^\prime(\alpha)}\left(\int_0^1{e^{tu_0^\prime(\alpha)}d\alpha}\right)^{-1},$ a bounded expression for $\gamma_\alpha$ which we differentiate w.r.t. to $t,$ to get
\begin{equation}
\label{eq:05}
\begin{split}
u_x(\gamma(\alpha,t),t)=u_0^\prime(\alpha)-\frac{\int_0^1{u_0^\prime(\alpha)e^{tu_0^\prime(\alpha)}d\alpha}}{\int_0^1{e^{tu_0^\prime(\alpha)}d\alpha}}
\end{split}
\end{equation}
and so 
$$0\leq u_0^\prime(\alpha)-u_x(\gamma(\alpha,t),t)\leq\int_0^1{u_0^\prime(\alpha)\,e^{tu_0^\prime(\alpha)}d\alpha},\,\,\,\,\,\,\,\,\,\,\,t\geq0.$$ 
The cases where $u'_0, u''_0\in PC_\mathbb{R}$  are analogous, and follow from the above.

\section{\textit{Proof of Lemma \ref{prop:prop}.}} 

For the series (\ref{eq:2f1}), we have the following convergence results \cite{Magnus1}:
				\begin{itemize}
				\item Absolute convergence for all $\lvert z\rvert<1.$
				\item Absolute convergence for $\lvert z\rvert=1$ if $Re(a+b-c)<0.$
				\item Conditional convergence for $\lvert z\rvert=1,\, z\neq1$ if $0\leq Re(a+b-c)<1.$
				\item Divergence if $\lvert z\rvert=1$ and $1\leq Re(a+b-c).$
				\end{itemize}
Furthermore, consider the identities \cite{Magnus1}:
\begin{equation}
\label{eq:2f112}
\begin{split}
\frac{d}{dz}{}_2F_1\left[a,b;c;z\right]=\frac{ab}{c}\,{}_2F_1\left[a+1,b+1;c+1;z\right],\,\,\,\,{}_2F_1\left[a,b;b;z\right]=(1-z)^{-a},
\end{split}
\end{equation} 
as well as the contiguous relations 
\begin{equation}
\label{eq:2f122}
\begin{split}
z\,{}_2F_1\left[a+1,b+1;c+1;z\right]=\frac{c}{a-b}\left({}_2F_1\left[a,b+1;c;z\right]-{}_2F_1\left[a+1,b;c;z\right]\right)
\end{split}
\end{equation} 
and
\begin{equation}
\label{eq:2f1222}
\begin{split}
{}_2F_1\left[a,b;c;z\right]=\frac{b}{b-a}\,{}_2F_1\left[a,b+1;c;z\right]-\frac{a}{b-a}\,{}_2F_1\left[a+1,b;c;z\right]
\end{split}
\end{equation} 
for $b\neq a.$ For simplicity, we prove the Lemma for $\beta_0=0$ and write $F={}_2F_1$. 

Set $a=1/2$, $c=3/2$ and $z=-\frac{C_0\beta^2}{\epsilon}$. Then, the assumptions in the Lemma imply that $-1\leq z\leq0$ and $a+b-c=b-1<1$, so that (\ref{eq:2f112})i) and the chain rule give
\begin{equation}
\label{eq:2f13}
\begin{split}
\frac{d}{d\beta}\,&\left\{\beta F\left[a,b;c;z\right]\right\}=\frac{2b}{3}\left(zF\left[a+1,b+1;c+1;z\right]\right)+F\left[a,b;c;z\right].
\end{split}
\end{equation} 
But for $b\neq a=\frac{1}{2},$ (\ref{eq:2f122}) and (\ref{eq:2f1222}) imply 
\begin{equation*}
\begin{split}
zF\left[a+1,b+1;c+1;z\right]=\frac{3}{1-2b}\left(F\left[a,b+1;c;z\right]-F\left[b,c;c;z\right]\right)
\end{split}
\end{equation*} 
and
\begin{equation*}
\begin{split}
F\left[a,b;c;z\right]=\frac{2b}{2b-1}F\left[a,b+1;c;z\right]-\frac{1}{2b-1}F\left[b,c;c;z\right].
\end{split}
\end{equation*} 
Substituting the two above into (\ref{eq:2f13}) and using (\ref{eq:2f112})ii)\footnote[14]{Notice that no issue arises when using identity (\ref{eq:2f112}) because, in our case, $-1\leq z\leq0$.} yields our result.\,\,\,\,\,\,\,\,$\square$

\section{\textit{Proof of (\ref{eq:maxp}).}}

We prove (\ref{eq:maxp}) for $\lambda>0$. The case of parameter values $\lambda<0$ follows similarly.

Suppose $\lambda>0$ and set $\eta_{\epsilon}=\frac{1}{\lambda M_0+\epsilon}$ for arbitrary $\epsilon>0.$ Then $0<\eta_{\epsilon}<\eta_*$ for $\eta_*=\frac{1}{\lambda M_0}$. Also, due to the definition of $M_0$, $$1-\lambda\eta_{\epsilon}u_0^\prime(\alpha)=\frac{\epsilon+\lambda(M_0-u_0'(\alpha))}{\lambda M_0+\epsilon}>0$$
for all $\alpha\in[0,1]$, while $1-\lambda\eta_{\epsilon}u_0^\prime(\alpha)=0$ only if $\epsilon=0$ and $\alpha=\overline\alpha_i.$ We conclude that
\begin{equation}
\label{eq:barg}
\begin{split}
1-\lambda\eta(t)u_0^\prime(\alpha)>0
\end{split}
\end{equation}
for all $0\leq\eta(t)<\eta_*$ and $\alpha\in[0,1].$ But $u_0^\prime(\alpha)\leq M_0$, or equivalently 
\begin{equation*}
\begin{split}
u_0^\prime(\alpha)(1-\lambda\eta(t)M_0 )\leq M_0(1-\lambda\eta(t) u_0^\prime(\alpha)),
\end{split}
\end{equation*}
therefore (\ref{eq:barg}) and $u_0'(\overline\alpha_i)=M_0$, $1\leq i\leq m$, yield
\begin{equation}
\label{eq:bef22}
\begin{split}
\frac{u_0^\prime(\alpha)}{\mathcal{J}(\alpha,t)}\leq\frac{u_0'(\overline\alpha_i)}{\mathcal{J}(\overline\alpha_i,t)}
\end{split}
\end{equation}
for $0\leq\eta<\eta_*$ and $\mathcal{J}(\alpha,t)=1-\lambda\eta(t)u_0'(\alpha)$, $\mathcal{J}(\overline\alpha_i,t)=1-\lambda\eta(t)M_0.$ The representation formula (\ref{eq:finalsolu22}) and (\ref{eq:bef22}) then imply
\begin{equation}
\label{eq:maxbef}
\begin{split}
u_x(\gamma(\overline\alpha_i,t),t)\geq u_x(\gamma(\alpha,t),t)
\end{split}
\end{equation} 
for $0\leq\eta(t)<\eta_*$ and $\alpha\in[0,1]$. Finally, one can easily see from (\ref{eq:finalsolu22}) that, as long as the solution exists,  
\begin{equation}
\label{eq:onetoone}
\begin{split}
u_x(\gamma(\alpha_1,t),t)=u_x(\gamma(\alpha_2,t),t)\,\,\,\,\,\Leftrightarrow\,\,\,\,\,u_0^\prime(\alpha_1)=u_0^\prime(\alpha_2)
\end{split}
\end{equation}
for all $\alpha_1, \alpha_2\in[0,1].$ Then, (\ref{eq:maxp})i) follows by using definition (\ref{eq:max}) and (\ref{eq:maxbef}). Likewise, $u_0^{\prime}(\alpha)\geq m_0=u_0'(\underline\alpha_j),$ (\ref{eq:finalsolu22}) and (\ref{eq:barg}) imply (\ref{eq:maxp})ii). Similarly for $\lambda<0,$ (\ref{eq:barg}) holds with $\eta_*=\frac{1}{\lambda m_0}>0$ instead. Both (\ref{eq:maxp})i), ii) then follow as above. $\square$

\end{document}